\newcommand{\scal}[2]{ \langle #1 , #2 \rangle}
\newcommand{\ct}{\mathrm{ct}} 
\renewcommand{\Re}{\mathrm{Re}}
\newcommand{\NN}{\mathbb N}
\newcommand{\ZZ}{\mathbb Z}
\newcommand{\RR}{\mathbb R}
\newcommand{\CC}{\mathbb C}
\numberwithin{equation}{section}
\newtheorem{theorem}{Theorem}[section]
\newtheorem{lemma}[theorem]{Lemma}
\renewcommand{\Im}{\mathrm{Im}}
\renewcommand{\Re}{\mathrm{Re}}
\theoremstyle{remark}
\newtheorem*{remark}{Remark}
\theoremstyle{definition}
\begin{document}
\title[Eigenvalue asymptotics for elastic crack problems]{Eigenvalue asymptotics for an elastic strip and an  elastic plate with a crack}

\author{Andr\'e H\"anel}
\address{Andr\'e H\"anel, Institut  f\"ur Analysis, Leibniz Universit\"at Hannover, Welfengarten 1,  D-30167 Hannover}
\email{andre.haenel@math.uni-hannover.de}

\author{Timo Weidl}
\address{Timo Weidl, Institut f\"ur Analysis, Dynamik und Modellierung, Universit\"at Stuttgart, Pfaffenwaldring 57, D-70569 Stuttgart}
\email{weidl@mathematik.uni-stuttgart.de} 

\begin{abstract}
We consider the elasticity operator with zero Poisson's ratio  on an infinite    strip and an  infinite plate with  a horizontal crack. We prove an asymptotic formula for the  distance of the 
embedded eigenvalues to some spectral threshold of the operator as the crack becomes small.
\end{abstract}

\maketitle

\section{Introduction}
In the present article we consider an elastic strip and an elastic plate with a horizontal crack. We are interested in the existence of trapped modes and their asymptotic behaviour
as the crack shrinks to a point. Mathematically a trapped mode corresponds to an (embedded) eigenvalue of the elasticity operator acting on functions which satisfy traction-free boundary conditions.  We assume that the elastic material is homogeneous and isotropic having zero Poisson's coefficient. We  generalise previous results obtained in \cite{HSW},  
where  the existence of at least two embedded eigenvalues  for the infinite strip with a horizontal crack has been proved. Moreover, for an elastic plate with crack it was shown that there exist  infinitely many eigenvalues and a one-sided asymptotic estimate for the distance of the eigenvalues to some spectral threshold has been given. We generalise this estimate and prove an asymptotic formula. Instead of the variational ansatz, which has been used in \cite{HSW}, we use a boundary integral method based on the  corresponding Dirichlet-to-Neumann mapping. 

As in \cite{HSW} we want to take advantage of the symmetry of the domain, which allows us to consider an equivalent mixed problem. An additional symmetry induced by the choice of Poisson's ratio allows us to prove the existence of exactly two discrete eigenvalues of  some symmetric part of the operator. For non-zero Poisson's ratio this last symmetry decomposition fails and the eigenvalues will in general  turn into resonances. Although, the  method is suitable to treat this problem,  this case  will not be considered in the present work. 
The symmetry decomposition of the elasticity operator for vanishing Poisson's ratio goes back to \cite{Roitetal}, where the existence of embedded eigenvalues was  shown for the elastic semistrip. These results have been generalised to the case of an elastic strip or plate with zero Poisson's ratio where material 
properties are perturbed \cite{FoeWeidl,Foe}, where  a hole is cut out \cite{Foe3D}, or strips and plates having a crack \cite{HSW}. In \cite{FoeWeidl,Foe} an  asymptotic formula for the convergence of the  eigenvalues to some spectral threshold was proved. The eigenvalue expansion is constructed via a Birman-Schwinger analysis of the original operator.  
In contrast to the perturbation of material properties, in the case of a shrinking crack the domain of the elasticity operator\footnote{or more precisely the domain of its quadratic form} does not remain unchanged. Hence, the Birman-Schwinger cannot be applied. Moreover,  the variational approach which was used in \cite{HSW} is not suitable to obtain
an asymptotic formula for the eigenvalues. In the present article we want to use a  boundary integral approach for the proof of the  asymptotic formula. This method allows us to reformulate
the singular perturbation of the original operator into an additive perturbation of the Dirichlet-to-Neumann operator. Then  the asymptotic formula for
the eigenvalues follows by a Birman-Schwinger analysis of the corresponding operator acting on the boundary.  The method was outlined in the simple case of the 
Dirichlet Laplacian with Neumann window in \cite{HaenelWeidl}.  

For bounded domains a similar approach has been employed in \cite{AmKaLee} to prove an asymptotic formula 
for the eigenvalues of  the Laplacian acting on a domain with a small inclusion, cf.\ also  \cite{AmKaLeeElastic13} for the treatment of an  elastic crack problem.

In addition in the case of an infinite elastic waveguide, there are various numerical approaches, see e.g.\ \cite{GridinCraster05,GridinAdaCraster05,ZernovPichKap06,ZernovKap08,LawrieKaplunov12,Pagneux12}. However, to our knowledge the articles  \cite{HSW,Roitetal,FoeWeidl,Foe,Foe3D} contain the only analytical results for a perturbed elastic strip or plate. 

\section{Statement of the problem}
Let  $I  := \left(-\pi/2, \pi/2\right)$. We denote by  $\Omega^{(d)} : = \RR^{d-1}  \times I$ an elastic medium in
dimension $d \in \{ 2,3\}$  and consider cracks of the form  
$\overline{\Gamma} \times \{0 \}$,  where $ \Gamma \subseteq \RR^{d-1}$ is a bounded open set. As there is no risk of confusion
we identify  $\overline{\Gamma} \times \{0 \}$ with $\overline{\Gamma}$. 
In what follows we denote by $u : \Omega^{(d)} \backslash \overline{\Gamma}  \to \CC^d$ 
the displacement field of the elastic material. Its strain is given by the matrix
\begin{equation}\label{def:strain}
	 E(u) := \frac{1}{2} ( \partial_i u_j + \partial_j u_i)_{i,j=1, \ldots, d}
\end{equation}
and for the stress we have 
\begin{equation}\label{def:stress}
	\Sigma(u) := 2 \mu  E(u) + \lambda  \mathrm{div} (u)  I_d ,
\end{equation}
where $I_d$ is the $d$-dimensional unit matrix   and $\lambda$ and $\mu$ are the  Lam\'e coefficients. 
These constants depend on  Poisson's ratio and  Young's modulus of the material and satisfy $\mu, \lambda + 2 \mu > 0$. In what follows we consider the case of a homogeneous material with zero 
Poisson's ratio, which implies for the Lam\'e coefficients  $\lambda =0$ and  $\mu=1$. Then the  elasticity operator with traction-free boundary conditions is defined by its  
energy form 
$$ a_\Gamma [u]: = \int_{\Omega \backslash \overline{\Gamma}}  \scal{E(u)}{\Sigma(u)}_{d \times d} \; \mathrm{d} x $$
where $u \in D[a_\Gamma] := H^1(\Omega\backslash \overline{\Gamma}; \CC^d)$. Here $\scal{\cdot}{\cdot }_{d \times d}$ denotes the standard scalar product in $\CC^{d\times d}$
identified with $\CC^{d^2}$. 
Then $a_\Gamma$ defines a positive quadratic form, which is closed by Korn's inequality, cf.\ e.g.\ \cite{HSW} and the references therein.  The corresponding 
self-adjoint operator $A_{\Gamma}$ acts as the matrix differential operator
\begin{equation}
  -  \Delta - \; \mathrm{grad} \ \mathrm{div} 
\end{equation}
on functions with vanishing conormal derivative, i.e., on $\partial ( \Omega^{(d)} \backslash \overline{\Gamma})$ we have
\begin{equation} \label{eq:conormal_derivative}
	\Sigma (u) \cdot {\bf n} = 0 
\end{equation}
for all $u :\Omega^{(d)}\backslash \overline{\Gamma} \to \CC^d$ in $D(A_\Gamma)$. Here $\bf n$ denotes the unit normal.
Due to the location of the crack and the assumptions on Poisson's ratio  we will use the symmetries induced by the domain  and the operator.
They allow us  to decompose the form domain and the operator domain into symmetric pieces.

\subsection{The symmetry decomposition in 2D} 
Using the ideas from \cite{HSW,Roitetal,FoeWeidl} 
we denote $\mathcal H:=L^2(\Omega^{(2)};\CC^2)$ and define the following subspace of symmetric waves
\begin{align}\label{def:H_s}
	 \mathcal H_{\mathrm{s}} :=\left\{u\in \mathcal H : u_1(x_1,x_2)= u_1(x_1,-x_2);\; u_2(x_1,x_2) = - u_2(x_1,-x_2) \right\}.
\end{align}
The elements of its orthogonal complement $\mathcal H_{\mathrm{as}} := \mathcal H_{\mathrm s}^\perp$ are called antisymmetric waves.
Furthermore, we introduce  
\begin{align}\label{def:H_1}
  \mathcal H_1:=  \biggr\{u \in \mathcal H : \partial_2 u_1(x_1,x_2)=0,\, u_2(x_1,x_2)=0 \biggr\} , 
\end{align}
and denote by $\mathcal H_2 = \mathcal H_{\mathrm{s}} \ominus \mathcal H_1$ its  orthogonal complement in $\mathcal H_{\mathrm{s}}$. Then 
\begin{align}\label{def:H_2}
\mathcal H_2 =\left\{ u \in \mathcal H_{\mathrm{s}} : \int_{-\frac{\pi}{2}}^{\frac{\pi}{2}} u_1(x_1,x_2) \; \mathrm{d} x_2 =0\ \text{for a.e. }
  x_1\in\RR \right\}
\end{align}
and we obtain  the following decomposition of Hilbert space
\begin{align*}
\mathcal H = \mathcal H_{\mathrm{s}} \oplus \mathcal H_{\mathrm{as}} = \mathcal H_1 \oplus \mathcal H_2 \oplus \mathcal H_{\mathrm{as}}.
\end{align*}
The form $a_\Gamma$  as well as the operator $A_\Gamma$ decompose in the same way
\begin{align*}
	  a_\Gamma &= a_{\Gamma}^{(\mathrm{s})} \oplus a_{\Gamma}^{(\mathrm{as})} = a_{\Gamma}^{(1)} \oplus a_{\Gamma}^{(2)} \oplus a_{\Gamma}^{(\mathrm{as})},\\[4pt]
  A_\Gamma &= A_{\Gamma}^{(\mathrm{s})} \oplus A_{\Gamma}^{(\mathrm{as})} = A_{\Gamma}^{(1)} \oplus A_{\Gamma}^{(2)} \oplus A_{\Gamma}^{(\mathrm{as})}, 
\end{align*}
where $a_{\Gamma}^{(\dagger)}$ and $A_{\Gamma}^{(\dagger)}$ act in $\mathcal H_\dagger$ for $\dagger \in \{\mathrm{s},\mathrm{as},1,2\}$, cf.\ \cite{HSW}. Based on the ideas
in \cite{Birman62} one proves that the essential spectrum of  $A_{\Gamma}^{(\dagger)}$ 
is  independent of the crack. We have 
\begin{equation} \sigma_{\mathrm{ess}}(A_\Gamma) = \sigma_{\mathrm{ess}}(A_\Gamma^{(\mathrm{s})}) = \sigma_{\mathrm{ess}}(A_\Gamma^{(\mathrm{as})}) = \sigma_{\mathrm{ess}}(A_\Gamma^{(1)}) = [ 0, \infty) \end{equation}
and 
\begin{equation}  \sigma_{\mathrm{ess}}(A_\Gamma^{(2)}) = [ \Lambda , \infty) \end{equation}
for some $\Lambda > 0$, cf.\ \cite{HSW}.  The role of $\Lambda$ is elucidated  in Section \ref{subsec:dispersion_curves}. 

\subsection{The symmetry decomposition in 3D} 
In the case of a three-dimensional plate we put $\mathcal H :=  L^2(\Omega^{(3)};\CC^3)$ as well as 
\begin{align}
	\mathcal H_{\mathrm{s}} &:= \{u\in  \mathcal H : u_k(x_1,x_2,x_3) = u_k(x_1,x_2,- x_3),\ k=1,2 \; \wedge \\
		& \phantom{:= \{} \qquad \qquad  u_3(x_1,x_2,x_3)= - u_3(x_1,x_2,- x_3) \} ,  	
\end{align}
and $\mathcal H_{\mathrm{as}} := (\mathcal H_{\mathrm s})^\perp$. We set 
\begin{align} 
	\mathcal H_1  &:=\left\{u \in \mathcal H : \partial_3 u_1 = \partial_3 u_2 =0,\;   u_3 = 0  \right\} 
\end{align}
and $\mathcal H_2 := \mathcal H_{\mathrm{s}} \ominus \mathcal H_1$. 
As above we obtain the decomposition $A_\Gamma =  A_{\Gamma}^{(1)} \oplus A_{\Gamma}^{(2)} \oplus A_{\Gamma}^{(\mathrm{as})}$, 
where $A_{\Gamma}^{(\dagger)}$ acts in $\mathcal H_\dagger$ for $\dagger \in \{\mathrm{s},\mathrm{as},1,2\}$ and 
\begin{equation} \sigma_{\mathrm{ess}}(A_\Gamma) = \sigma_{\mathrm{ess}}(A_\Gamma^{(\mathrm{s})}) = \sigma_{\mathrm{ess}}(A_\Gamma^{(\mathrm{as})}) = \sigma_{\mathrm{ess}}(A_\Gamma^{(1)}) = [ 0, \infty)  \end{equation}
and 
\begin{equation}  \sigma_{\mathrm{ess}}(A_\Gamma^{(2)}) = [ \Lambda , \infty) ,\end{equation}
cf.\ \cite{HSW}. The constant  $\Lambda$ is given as in the two-dimensional case.

\section{The results}
We generalise the one-sided asymptotic estimate  in \cite{HSW} and prove an asymptotic formula for the eigenvalues of the operator $A_{\Gamma}^{(2)} $ as the cracks size tends to zero. Moreover, we show an estimate on the  number of eigenvalues for small crack sizes. Since, the operator $A_{\Gamma}^{(2)} $ has at least two eigenvalues in the two-dimensional case and infinitely many eigenvalues in three dimensions we will need an additional  symmetry conditions on the shape of the crack. In doing so, we may prove an asymptotic formula for each eigenvalue. 

\subsection{The two-dimensional case} We assume that  $\Gamma \subseteq \RR$ is a  finite union of bounded open intervals and that $\Gamma$ is symmetric with respect to the axis $x_1 =0$, i.e., we have  $\Gamma = - \Gamma$. We put $\Gamma_\ell := \ell \cdot \Gamma$.
\begin{theorem}\label{th:main_2D}
	There exists $\ell_0 = \ell_0(\Gamma) > 0$ such that for $\ell \in (0,\ell_0)$ the operator 
	$A_{\Gamma_\ell}^{(2)}$ has exactly two discrete 
	eigenvalues $\lambda_1(\ell)$ and $\lambda_2(\ell) $ below its essential spectrum $[\Lambda, \infty)$. They satisfy
	\begin{align}
		\Lambda - \lambda_1(\ell) &= \nu_1 \cdot \ell^4  
		+ \mathcal O(\ell^5) \qquad \text{as} \quad \ell \to 0   , \\
		\Lambda - \lambda_2 (\ell)  &= \nu_2 \cdot \ell^8 
		 + \mathcal O(\ell^9) \qquad \text{as} \quad \ell \to 0  . 
	\end{align}
	The constants $\nu_1 = \nu_1(\Gamma) > 0$ and $\nu_2  = \nu_2(\Gamma) > 0$ are given by \eqref{def:nu_1} and \eqref{def:nu_2}. %
\end{theorem}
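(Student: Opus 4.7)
The plan is to follow the boundary-integral approach sketched in \cite{HaenelWeidl}: convert the singular perturbation coming from the small crack into an additive perturbation of a Dirichlet-to-Neumann operator, and extract the asymptotics from a Birman--Schwinger analysis on $\Gamma_\ell$. The first step is to exploit the additional symmetry $\Gamma = -\Gamma$. The operator $A_{\Gamma_\ell}^{(2)}$ commutes with the reflection $x_1 \mapsto -x_1$, so it splits as $A_{\Gamma_\ell}^{(2,+)} \oplus A_{\Gamma_\ell}^{(2,-)}$ according to the parity of the $x_1$-dependence. The two eigenvalues $\lambda_1(\ell)$ and $\lambda_2(\ell)$ will arise as the unique eigenvalues below $\Lambda$ of these two parts, which explains why they obey different power laws.

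Next, using the $x_2$-symmetry built into $\mathcal H_2$, the eigenvalue problem is transferred to the half-strip $\Omega_+ := \RR \times (0,\pi/2)$. Off the crack the symmetry conditions become $u_2 = 0$ and $\partial_2 u_1 = 0$ on $\{x_2 = 0\}$, while on $\Gamma_\ell$ the traction-free condition $\Sigma(u) \cdot \mathbf n = 0$ reduces to $\partial_2 u_2 = 0$ and $\partial_1 u_2 + \partial_2 u_1 = 0$. Let $A_0$ denote the reference operator with no crack (only the symmetry conditions on the axis); its essential spectrum starts at $\Lambda$. For $z < \Lambda$ we build a Dirichlet-to-Neumann map $\mathcal N(z)$ relating the trace data of solutions of $(A_0 - z)u = 0$ on $\{x_2 = 0\}$ to the corresponding tractions, and encode the spectral condition for $A_{\Gamma_\ell}^{(2)}$ as $1 \in \sigma(K(z,\ell))$, where $K(z,\ell)$ is a compact operator acting on data supported on $\Gamma_\ell$ depending analytically on $z$.

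The decisive step is the analysis of $K(z,\ell)$ as $z \to \Lambda^-$ and $\ell \to 0$. Writing $z = \Lambda - \kappa^2$ with $\kappa > 0$ small and rescaling $x_1 = \ell \xi$, the leading singularity of $\mathcal N(z)$ at the threshold comes from the threshold mode $\Phi$ identified in the dispersion analysis and has the form $\kappa^{-1} \Phi(x_1)\overline{\Phi(y_1)}$ plus a regular remainder. After rescaling, this contribution carries a factor $\ell^2$ from the measure of $\Gamma_\ell \times \Gamma_\ell$. On the symmetric sector, $\Phi(0) \ne 0$ gives a leading contribution of order $\ell^2/\kappa$; solving $1 \sim c_1 \ell^2/\kappa$ yields $\Lambda - \lambda_1(\ell) = \kappa^2 \sim \ell^4$. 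On the antisymmetric sector the $\Phi(0)$-term drops out by parity and one Taylor-expands $\Phi(\ell\xi) \approx \ell\xi\,\Phi'(0)$, producing two extra powers of $\ell$ and a leading order $\ell^4/\kappa$, whence $\Lambda - \lambda_2(\ell) = \kappa^2 \sim \ell^8$. Standard analytic perturbation theory applied to the expansion of $K$ in $\ell$ and $\kappa$ then delivers the explicit constants $\nu_1$, $\nu_2$ together with the remainders $\mathcal O(\ell^5)$ and $\mathcal O(\ell^9)$.

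To see that there are exactly two eigenvalues below $\Lambda$ for small $\ell$, one checks that, after subtracting the singular contributions just identified, $K(\Lambda - \kappa^2,\ell)$ tends to zero in operator norm as $\ell \to 0$ uniformly in $\kappa \ge 0$, so no further eigenvalues can detach from the essential spectrum. The main obstacle in carrying this out is the precise description of the threshold singularity of $\mathcal N(z)$: one has to unfold the transverse elasticity problem, identify the threshold mode $\Phi$ together with its normalisation, and separate its contribution from the regular remainder with enough accuracy to read off $\nu_1$ and then one further order in $\ell$ to reach $\nu_2$ on the antisymmetric sector.
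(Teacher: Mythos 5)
Your proposal follows the same strategy as the paper: reduce to the half-strip $\Omega_+$ via the $x_2$-symmetry of $\mathcal H_2$, build the Dirichlet-to-Neumann operator and identify its $\kappa^{-1}$ threshold singularity carried by the modes $e^{\pm \mathrm{i}\varkappa x_1}$, rescale to a fixed domain $\Gamma$, split according to the reflection $x_1 \mapsto -x_1$ so that the singular part becomes rank one per parity sector, and read off the orders $\ell^4$ (even, since $\cos(\varkappa\ell\,\cdot)|_{0}\ne 0$) and $\ell^8$ (odd, after Taylor-expanding $\sin(\varkappa\ell\,\cdot)$) from the rank-one Birman--Schwinger condition. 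The power counting and the uniqueness argument (positivity of the operator on the orthogonal complement of the singular direction) coincide with the paper's Theorem \ref{th:asymptotics_q}, Lemmas \ref{lemma:ev_D_ell_2D}--\ref{lemma:Birman_Schwinger}, and the proof in Section \ref{sec:proof_main_th_2D}, so the route is essentially the one the paper takes.
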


\subsection{The three-dimensional case}
We assume that $\Gamma = B(0,1) $ is the ball of radius  $1$ with centre $0$ and let   $\Gamma_\ell = \ell \cdot \Gamma = B(0,\ell)$ be the ball of radius $\ell$ and centre $0$. 
\begin{theorem}\label{th:main_3D}
	For $\ell > 0$ the  elasticity operator $A_{\Gamma_\ell}^{(2)}$ has infinitely many eigenvalues 
	below its essential spectrum $[\Lambda, \infty)$. There exists an enumeration of the eigenvalues  $\lambda_m(\ell)$, $m \in  \ZZ$, such that for each $m \in \ZZ$ we have the   following asymptotic expansion
	\begin{equation}
		\Lambda -  \lambda_m(\ell) =  \rho_m  \cdot \ell^{6+4|m|}    + \mathcal O(\ell^{7+4|m|} ) \qquad \text{as} \quad \ell \to 0 . 
	\end{equation} 
	The constants   $ \rho_m  > 0$ are given by \eqref{def:nu_m}. 
\end{theorem}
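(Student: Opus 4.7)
The plan is to follow the boundary-integral strategy sketched in the introduction and used in the two-dimensional case of Theorem~\ref{th:main_2D}. First I would reformulate the eigenvalue problem for $A_{\Gamma_\ell}^{(2)}$ as a spectral problem posed on the crack $\Gamma_\ell = B(0,\ell) \subset \RR^2$ itself: working in the symmetric subspace $\mathcal H_2$, the equation $A_{\Gamma_\ell}^{(2)} u = \lambda u$ with $\lambda < \Lambda$ is equivalent to $\mathcal M_\ell(\lambda) [u] = 0$, where $[u]$ is the jump of the trace across the crack and $\mathcal M_\ell(\lambda)$ is the Dirichlet-to-Neumann (hypersingular) operator associated to the elasticity system on $\Omega^{(3)} \setminus \overline{\Gamma_\ell}$ at the spectral parameter $\lambda$. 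This replaces the singular domain perturbation by an additive perturbation of a family of operators on $\Gamma_\ell$ accessible to standard boundary-integral analysis.

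Second, using the spectral representation of the unperturbed resolvent in terms of the dispersion curves of Section~\ref{subsec:dispersion_curves}, I would isolate the contribution of the threshold mode at $\Lambda$ (whose vanishing group velocity produces a $(\Lambda - \lambda)^{-1/2}$ singularity) from the regular, analytic contribution of all remaining modes. Dilating the crack to the unit ball via $x' \mapsto \ell x'$ and tracking the scaling of the induced boundary operators on Sobolev spaces over $\Gamma = B(0,1)$, one arrives at an expansion of the form
\begin{equation*}
	\mathcal M_\ell(\lambda) = \mathcal M_0 - c_0 \, \ell^{\alpha} (\Lambda - \lambda)^{-1/2} \mathcal K + \mathcal O\bigl(\ell^{\alpha+1}\bigr)
\end{equation*}
on $\Gamma$, where $\mathcal M_0$ is the principal pseudodifferential part inherited from the symbol at $\Lambda$, $\mathcal K$ is a rank-one operator encoding the threshold wave profile, and the exponent $\alpha = 3$ reflects the three-dimensional geometry. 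The eigenvalue equation then takes Birman-Schwinger form: $\lambda$ is an eigenvalue exactly when $1$ lies in the spectrum of the compact operator $c_0 \ell^\alpha (\Lambda - \lambda)^{-1/2} \mathcal M_0^{-1} \mathcal K$.

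For $\Gamma = B(0,1)$ I would exploit the $\mathrm{SO}(2)$-symmetry around the vertical axis: both $\mathcal M_0$ and $\mathcal K$ commute with rotations and block-diagonalise along the angular Fourier modes $e^{im\theta}$, $m \in \ZZ$. In the $m$-th sector the Birman-Schwinger condition becomes scalar, and solving it to leading order yields $\Lambda - \lambda_m(\ell) = \rho_m \, \ell^{6+4|m|} + \mathcal O(\ell^{7+4|m|})$. The exponent arises from the combination of the overall $\ell^{2\alpha} = \ell^6$ scaling in the Birman-Schwinger relation with an additional factor $\ell^{4|m|}$ in the $m$-th sector, which appears because the harmonic extensions $r^{|m|}e^{im\theta}$ spanning the angular basis vanish to order $|m|$ at the centre of the disk and thus pair with $\mathcal K$ to order $\ell^{2|m|}$. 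Positivity of $\rho_m$ is forced by the definite sign of the threshold contribution on the trapped-mode subspace, as exploited in \cite{HSW}.

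The main obstacle will be the explicit computation of the principal symbol of $\mathcal M_0$ and of the angular matrix elements of $\mathcal M_0^{-1}\mathcal K$ on the unit disk, since both are required to identify the sharp exponent $6 + 4|m|$ and to express the constants $\rho_m$ via \eqref{def:nu_m}. A secondary technical difficulty is to run the Birman-Schwinger analysis uniformly in $m$ so as to guarantee that $\{\lambda_m(\ell)\}_{m \in \ZZ}$ exhausts the discrete spectrum of $A_{\Gamma_\ell}^{(2)}$ below $\Lambda$ and that each eigenvalue is captured with the correct angular multiplicity; this requires quantitative decay estimates for $\mathcal M_0^{-1}$ restricted to high Fourier modes, together with a careful control of the remainder $\mathcal O(\ell^{\alpha+1})$ in the expansion of $\mathcal M_\ell(\lambda)$ uniformly in $m$.
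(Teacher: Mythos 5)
Your proposal follows essentially the same strategy as the paper: reduce to the symmetric problem on the upper half-plate, construct the Dirichlet-to-Neumann operator $D_\omega$, extract the $(\Lambda-\omega)^{-1/2}$ singularity from the threshold mode by a residue/contour argument, scale the crack to $\Gamma=B(0,1)$, block-diagonalise along the angular Fourier modes $e^{\mathrm i m\varphi}$, and apply the rank-one Birman--Schwinger lemma in each sector; your heuristic for the extra $\ell^{4|m|}$ is consistent with the paper's Bessel-function computation, since the $m$-th angular component of the plane wave $\Phi_{\varkappa\theta}$ is $J_m(\varkappa\ell r)\sim(\varkappa\ell/2)^{|m|}r^{|m|}$. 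One small remark: the paper does not (and need not) run the Birman--Schwinger analysis uniformly in $m$ — the threshold $\ell_0$ is allowed to depend on $m$, which suffices for the per-$m$ asymptotics stated in the theorem — so the ``secondary technical difficulty'' you flag is circumvented rather than resolved.
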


\section{The analysis of the unperturbed operator} \label{subsec:dispersion_curves}
As a first step we investigate  the unperturbed problem corresponding to $\Gamma = \varnothing$. 
\subsection{The two-dimensional case}
Put  $\Omega = \RR \times I$, where $ I = \left( - \pi/2 , \pi/2\right)$.  From general regularity theory for elliptic boundary value problems 
we know that the domain of $A_\varnothing$ is contained in $H^2(\Omega;\CC^2)$, i.e., we have 
\begin{equation*}
	D(A_\varnothing) = \{ u \in H^2(\Omega;\CC^2) :  \partial_1 u_2 + \partial_2 u_1 = 0 \; \wedge \; \partial_2 u_2 = 0 \text{ on } \partial \Omega\} .  
\end{equation*}
Applying the Fourier transform in the horizontal direction leads to a family of unbounded operators depending on a complex parameter
$\xi \in \CC$
\begin{equation} A_\varnothing(\xi) := \left( \begin{array}{cc} 2\xi^2 - \partial_2^2 & - \mathrm{i} \xi \partial_2 \\
- \mathrm{i} \xi \partial_2 & \xi^2 - 2 \partial_2^2  \end{array} \right)  \end{equation}
acting in $L_2(I ; \CC^2)$ having the operator domain 
\begin{equation} D(A_\varnothing(\xi)) := \left\{ u \in H^2 (I; \CC^2) :  
  \mathrm{i} \xi u_2 + \partial_2 u_1 = 0 \;  \wedge \; \partial_2 u_2 = 0 \text{ on } \{\pm \pi/2 \} \right\} .\end{equation}
The corresponding sesquilinear  form is given by 
\begin{align*}
	a_\varnothing(\xi)[u,v] := \int_{-\frac\pi2}^{\frac\pi2}  
 		& \xi^2( 2  u_1 \overline{v_1} +  u_2 \overline{v_2} )
 		 + \partial_2 u_1 \cdot \overline{\partial_2 v_1} + 2 \partial_2 u_2 \cdot \overline{\partial_2 v_2}  \\
		& - \mathrm{i} \xi (\partial_2 u_1 \cdot\overline{v_2}  -  u_2 \cdot \overline{\partial_2 v_1} )  \; \mathrm{d}  x_2
\end{align*}
with $u,v \in D[a_\varnothing(\xi)] := H^1(I; \CC^2)$. This definition extends to $\xi \in \CC$ and defines a holomorphic family of type (a) in the sense of Kato, cf.\ \cite[Chapter VII. \S 4, Paragraph 2]{Kato}. 
The decomposition of $\mathcal H$ into the spaces  $\mathcal H_\mathrm{s}$, $\mathcal H_{\mathrm{as}}$, $\mathcal H_1$ and $\mathcal H_2$
induces  the following decomposition in the Fourier image
\begin{equation*}
	L_2(I;\CC^2) = h_{\mathrm{s}} \oplus h_{\mathrm{as}} = h_1 \oplus h_2 \oplus h_{\mathrm{as}} ,
\end{equation*}
where 
\begin{align}
	h_{\mathrm{s}} &:= \{ u \in L_2(I; \CC^2) : u_1 (x_2) = u_1(-x_2) \;  \wedge \; u_2(x_2) = - u_2(-x_2) \} ,\\
 	h_{\mathrm{as}} &:= \{ u \in L_2(I; \CC^2) : u_1 (x_2) = -u_1(-x_2) \;  \wedge \;  u_2(x_2) =  u_2(-x_2) \} ,
\end{align}
as well as
\begin{align} \label{def:h_i}
	h_1 := \left\{ c \cdot \left(\begin{array}{c} 1\\ 0  \end{array} \right): c\in \CC \right\}, \qquad \qquad 
	h_2 := \left\{ u \in h_{\mathrm{s}}: \int_{-\frac\pi2}^{\frac\pi2} u_1(x_2) \; \mathrm{d} x_2 = 0 \right\}.
\end{align}
The following lemma shows that the operator $A_\varnothing(\xi)$ decomposes for every $\xi\in \CC$ into
an orthogonal sum of self-adjoint operators acting in $h_{\mathrm{s}}$ and  $h_{\mathrm{as}}$ respectively in
 $h_1$, $h_2$ and $h_{\mathrm{as}}$. 
\begin{lemma}[cf.\ \cite{HSW}]\label{lemma:direct_integral}
The following two assertions hold true:
\begin{enumerate}
  	\item For every $\xi \in \CC$ the operator $A_\varnothing(\xi)$ decomposes into 
  	$$A_\varnothing(\xi) = A_\varnothing^{(\mathrm{s})}(\xi) \oplus A_\varnothing^{(\mathrm{as})}(\xi) = A_\varnothing^{(1)}(\xi) \oplus A_\varnothing^{(2)}(\xi) \oplus A_\varnothing^{(\mathrm{as})}(\xi) ,$$
  	where $A_\varnothing^{(\dagger)}(\xi)$ acts in $h_\dagger $ for $\dagger  \in \{\mathrm{s}, \mathrm{as},1,2\}$. 
  	\item The  operator $A_\varnothing$  is unitarily equivalent to the direct integral of the $A_\varnothing(\xi)$'s, i.e., we have 
  	$$ A_\varnothing \cong \int^\oplus_\RR A_\varnothing(\xi) \; \mathrm{d} \xi .$$
  	Moreover, for $\dagger \in \{\mathrm{s}, \mathrm{as},1,2\}$ we have 
  	$$ A_\varnothing^{(\dagger)} \cong \int^\oplus_\RR A_\varnothing^{(\dagger)}(\xi) \; \mathrm{d} \xi . $$
\end{enumerate}
\end{lemma}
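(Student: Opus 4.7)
The plan is to establish (1) by direct computation on the matrix symbol and then deduce (2) from the partial Fourier transform in the horizontal variable, using that this transform commutes with all vertical symmetries.

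For part (1), I would first verify the symmetric/antisymmetric decomposition. Observe that the diagonal entries of $A_\varnothing(\xi)$ involve only $\partial_2^2$ and so preserve parity in $x_2$, while the off-diagonal entries contain a single $\partial_2$, which flips parity. Thus, if $u_1$ is even and $u_2$ is odd (the defining condition of $h_{\mathrm{s}}$), then both components of $A_\varnothing(\xi) u$ have the correct parity; the same check works for $h_{\mathrm{as}}$. The boundary conditions $\mathrm{i}\xi u_2 + \partial_2 u_1 = 0$ and $\partial_2 u_2 = 0$ on $\{\pm\pi/2\}$ are likewise parity-compatible, so the operator domain splits accordingly. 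To further split $h_{\mathrm{s}}$, note that $h_1$ consists of constant first component and vanishing second component, and a trivial computation gives $A_\varnothing(\xi)(c,0)^T = (2\xi^2 c,0)^T \in h_1$. The invariance of $h_2$ is the only slightly delicate point: starting from $u\in h_2 \cap D(A_\varnothing(\xi))$, one computes
\begin{equation*}
\int_{-\pi/2}^{\pi/2} \bigl(2\xi^2 u_1 - \partial_2^2 u_1 - \mathrm{i}\xi \partial_2 u_2\bigr)\,\mathrm{d}x_2 = 2\xi^2\!\!\int u_1 \,\mathrm{d}x_2 - \bigl[\partial_2 u_1 + \mathrm{i}\xi u_2\bigr]_{-\pi/2}^{\pi/2},
\end{equation*}
which vanishes by the vanishing mean condition and the boundary condition. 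Hence $A_\varnothing(\xi) u \in h_2$.

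For part (2), I would introduce the Fourier transform $\mathcal F : L^2(\Omega;\CC^2) \to L^2(\RR; L^2(I;\CC^2))$ in the $x_1$-variable. Since $A_\varnothing$ acts as $-\Delta - \mathrm{grad}\,\mathrm{div}$ with coefficients independent of $x_1$, substituting $\partial_1 \mapsto \mathrm{i}\xi$ in the matrix symbol of $-\Delta - \mathrm{grad}\,\mathrm{div}$ produces precisely $A_\varnothing(\xi)$; the same substitution in the traction-free condition $\partial_2 u_1 + \partial_1 u_2 = 0$, $\partial_2 u_2 = 0$ at $x_2 = \pm\pi/2$ yields the boundary condition appearing in $D(A_\varnothing(\xi))$. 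Applying this to the quadratic form $a_\varnothing$ (which has smooth coefficients independent of $x_1$ and a core of Schwartz-class functions) gives $a_\varnothing[u] = \int_\RR a_\varnothing(\xi)[\mathcal F u(\xi,\cdot)]\,\mathrm{d}\xi$ on this core, and density plus the form characterisation of the direct integral yields $\mathcal F A_\varnothing \mathcal F^{-1} = \int^\oplus_\RR A_\varnothing(\xi)\,\mathrm{d}\xi$.

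Finally, the subspace statement follows from compatibility. The orthogonal projections onto $\mathcal H_\dagger$ for $\dagger \in \{\mathrm{s},\mathrm{as},1,2\}$ act only in the $x_2$-variable (and, for $h_2$, involve integration over $x_2$), so $\mathcal F$ intertwines them with the fibrewise projections onto $h_\dagger$. Combined with part (1), this directly gives $A_\varnothing^{(\dagger)} \cong \int^\oplus_\RR A_\varnothing^{(\dagger)}(\xi)\,\mathrm{d}\xi$.

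The only nontrivial step is the invariance of $h_2$ under $A_\varnothing(\xi)$, where the boundary terms must cancel against the vanishing-mean condition; everything else is bookkeeping on parities, Fourier transforms, and direct integrals.
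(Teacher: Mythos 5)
The paper does not prove this lemma itself but cites \cite{HSW}, so there is no internal argument to compare against. Your proof is correct and follows the standard approach: parity bookkeeping in $x_2$ for the $h_{\mathrm{s}}/h_{\mathrm{as}}$ split, a direct computation for $h_1$, an integration-by-parts argument (using the traction boundary condition to kill the boundary terms) for the invariance of $h_2$, and the observation that the partial Fourier transform in $x_1$ fibres a translation-invariant operator and commutes with the $x_2$-only symmetry projections. The one small point you could state explicitly is that the operator domain also splits across $h_1 \oplus h_2$, but this is immediate since the constant $h_1$-component contributes nothing to the boundary conditions.
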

For every $\xi \in \CC$ the spectrum $\sigma( A_\varnothing(\xi))$ consists of a discrete set 
of eigenvalues of finite algebraic multiplicity as $D(A_\varnothing(\xi))$ is compactly embedded into $L_2(I; \CC^2)$, and thus, the 
resolvent is compact, cf.\ \cite[Theorem III.6.29]{Kato}. Since  $(a_\varnothing (\xi))_{\xi \in \CC}$ forms an analytic family of 
type (a) the eigenvalues depend holomorphically  on $\xi$ (with the possible exception
of algebraic branching points) and the  direct integral representation implies 
\begin{align}\label{eq:spectrum_direc_integral}
	\sigma(A_\varnothing) = \bigcup_{\xi \in \RR} \sigma (A_\varnothing(\xi))  \quad \text{and} \quad 
	\sigma(A_\varnothing^{(\dagger)}) = \bigcup_{\xi \in \RR} \sigma (A_\varnothing^{(\dagger)}) 
\end{align}
for  $\dagger \in \{ \mathrm{s},\mathrm{as}, 1,2\}$. 

Now we consider the eigenvalue distribution of the operator $A_\varnothing(\xi)$ depending on the parameter $\xi \in \RR$. The arising curves  are generally referred to as the dispersion curves of $A_\varnothing$. In what follows the  dispersion curve  corresponding to the lowest eigenvalue of $A_\varnothing^{(2)}$ is of particular interest since it describes  the behaviour of the unperturbed operator near the  spectral threshold $\Lambda$.  
More generally we fix $\xi \in \CC$ and choose $\omega \in \CC$, $u \in H^2(I; \CC^2 )$ such that 
\begin{align}\label{eq:dispersion_curves}
  	A(\xi) u - \omega u &= 0 \qquad   \text{in } I, \\
  	\label{eq:dispersion_curves1}
  	\mathrm{i} \xi u_2 + \partial_2 u_1 &= 0 \qquad \text{on } \{\pm \pi/2 \},\\
  	\label{eq:dispersion_curves2}
  	\partial_2 u_2 &= 0 \qquad \text{on } \{\pm \pi/2 \}.
\end{align}
Here and subsequently we denote by $A(\xi)$ the matrix differential operator
$$ A(\xi) :=  \left( \begin{array}{cc} 2\xi^2 - \partial_2^2 & - \mathrm{i} \xi \partial_2 \\
- \mathrm{i} \xi \partial_2 & \xi^2 - 2 \partial_2^2  \end{array} \right) $$
if no boundary conditions are specified. The assumptions on $u$ already imply that $u \in C^\infty \left( [-\frac\pi2,\frac\pi2];\CC^2 \right) $. In 
order to solve \eqref{eq:dispersion_curves}-\eqref{eq:dispersion_curves2} 
we have to distinguish several cases:
\begin{enumerate}
\item Let $\omega \in \CC \backslash\{0\}$, $\omega \notin \{ \xi^2, 2 \xi^2\}$: 
A fundamental solution of the differential equation  \eqref{eq:dispersion_curves}  is given by the functions
\begin{align*}
  	v^1(x_2) &:= \left(\begin{array}{c}  \beta \\ -  \xi \end{array} \right) e^{\mathrm{i} \beta x_2};
  	&v^3(x_2) &:= \left(\begin{array}{c}  \xi \\  \gamma  \end{array} \right) e^{\mathrm{i} \gamma x_2};\\[8pt]
  	v^2(x_2) &:= \left(\begin{array}{c} - \beta \\ - \xi \end{array} \right) e^{- \mathrm{i} \beta x_2};  
  	& v^4(x_2) &:= \left(\begin{array}{c}  \xi \\ - \gamma \end{array} \right) e^{- \mathrm{i}  \gamma x_2}, 
\end{align*}
where  
$$ \beta := \sqrt{ \omega  - \xi^2} , \qquad \gamma := \sqrt{ \frac{\omega}2 - \xi^2}.$$
Here we choose the branch of the square root function such that $z \mapsto \sqrt{z}$ is holomorphic for 
$z \in \CC \backslash (- \infty,0]$ and  $\sqrt{z} > 0$ for all $z > 0 $. Moreover for $z \le 0  $ we define $\sqrt{z}$
such that $\Im (\sqrt{z}) \ge 0$. 
\item Let $\omega \in \CC \backslash \{0\}$, $\omega = \xi^2$: In this case we have $\beta = 0$ and the fundamental solution of  \eqref{eq:dispersion_curves} reads as
\begin{align*}
  v^1(x_2) &:= \left(\begin{array}{c} 0 \\ 1 \end{array} \right);
  &v^3(x_2) &:= \left(\begin{array}{c}  \xi \\  \gamma  \end{array} \right) e^{\mathrm{i} \gamma x_2}; \\[8pt]
  v^2(x_2) &:= \left(\begin{array}{c} 1\\  - \mathrm{i}   \xi x_2 \end{array} \right);
  & v^4(x_2) &:= \left(\begin{array}{c}  \xi \\ - \gamma \end{array} \right) e^{- \mathrm{i} \gamma x_2}.
\end{align*}
\item Let $\omega \in \CC \backslash\{0\}$, $\omega = 2 \xi^2 $: Then  $\gamma = 0$ and a fundamental solution of  \eqref{eq:dispersion_curves} is given by 
\begin{align*}
  v^1(x_2) &:= \left(\begin{array}{c}  \beta \\ -  \xi \end{array} \right) e^{\mathrm{i} \beta x_2};
  &v^3(x_2) &:= \left(\begin{array}{c} 1\\ 0\end{array} \right);\\[8pt]
  v^2(x_2) &:= \left(\begin{array}{c} -\beta \\ - \xi \end{array} \right) e^{- \mathrm{i} \beta x_2};
  & v^4(x_2) &:= \left(\begin{array}{c} \mathrm{i} \xi x_2\\ 1\end{array} \right).
\end{align*}
\item Let $\omega = 0, \xi = 0$: In this case all solutions have to be linear in both components.
\item Let $\omega = 0, \xi \neq 0$: Then we have $\beta = \gamma = \mathrm{i} \xi$ and we get 
\begin{align*}
 	 v^1(x_2) &:= \left(\begin{array}{c} \xi \\ \mathrm{i} \xi  \end{array} \right) e^{-\xi x_2};
  	& v^3(x_2) &:= \left[x_2 \left(\begin{array}{c} \xi \\  \mathrm{i} \xi  \end{array} \right) +  
  	\left( \begin{array}{c}  0\\[4pt]   3 \mathrm{i} \end{array}\right)\right] e^{-\xi x_2};\\[8pt]
 	 v^2(x_2) &:= \left(\begin{array}{c} \xi  \\ - \mathrm{i} \xi \end{array} \right) e^{\xi x_2}; 
  	&v^4(x_2) &:= \left[x_2 \left( \begin{array}{c} \xi  \\ - \mathrm{i} \xi \end{array} \right) + 
  	 \left( \begin{array}{c}  0 \\[4pt]  3 \mathrm{i} \end{array}\right) \right] e^{ \xi x_2}.
\end{align*}
\end{enumerate}
Considering  the boundary conditions leads to the well-known Rayleigh-Lamb equations for the symmetric and  antisymmetric part of 
the operator. The following lemma deals only with the symmetric part. 
\begin{lemma}\label{lemma:spectrum}
Let $\xi \in \CC$. Then the following assertions hold true:
\begin{enumerate}
	\item We have $\sigma(A_\varnothing^{(1)}(\xi)) = \{ 2 \xi^2\} $. 
  	\item The eigenvalues of $A_\varnothing^{(2)}(\xi)$ are simple. 
 	 \item Let $\omega \in \CC \backslash\{0\}$. Then $\omega \in \sigma(A_\varnothing^{(2)}(\xi))$ if and only if 
  	\begin{align} \label{eq:Rayleigh-Lamb}
   	   \frac{\sin \left(\beta \frac{\pi}2\right)}{\beta}  \cos \left(\gamma \frac{\pi}2\right)
   	   \gamma^2 + \cos \left(\beta \frac{\pi}2\right) \frac{\sin \left(\gamma \frac{\pi}2\right)}{\gamma} \xi^2 = 0, 
  	\end{align}
  	where as before
  	$$ \beta = \sqrt{ \omega  - \xi^2} , \qquad \gamma = \sqrt{ \frac{\omega}2 - \xi^2}.$$ 
  	Note that $0 \notin  \sigma(A_\varnothing^{(2)}(\xi))$ for any $\xi \in \CC$.
  	If $\gamma \neq 0$, then a  (non-normalised)  eigenfunction is  given by  
  	\begin{equation}\label{eq:eigenfuncion_elast}
  		x_2 \mapsto  \begin{pmatrix} \mathrm{i} \gamma^2 \beta \cos\left(\frac\pi2 \gamma\right) \cos\left(\beta x_2 \right) 
  		+ \mathrm{i} \xi^2 \beta \cos\left(\frac\pi2 \beta \right) \cos\left(\gamma x_2 \right) \\[6pt]
  		\xi \gamma^2  \cos  \left(\frac\pi2 \gamma\right) \sin \left(\beta x_2 \right)  - \xi \beta \gamma
  		\cos \left(\frac\pi2 \beta \right) \sin \left(\gamma x_2 \right)  \end{pmatrix} .
  	\end{equation} 	
\end{enumerate}
\end{lemma}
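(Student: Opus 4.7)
\emph{Proof plan.}
All three parts reduce to a $2\times 2$ linear algebra computation obtained by plugging a symmetric ansatz into the ODE $A(\xi)u = \omega u$ and imposing the boundary conditions \eqref{eq:dispersion_curves1}--\eqref{eq:dispersion_curves2}. For (1), note that $h_1$ is one-dimensional and $A(\xi)$ sends the constant vector with first component $1$ and second component $0$ to $2\xi^2$ times itself; the boundary conditions are trivially satisfied, so $A_\varnothing^{(1)}(\xi) = 2\xi^2\cdot\mathrm{id}$ and $\sigma(A_\varnothing^{(1)}(\xi))=\{2\xi^2\}$.

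For (3) in the generic case $\omega\in\CC\backslash\{0,\xi^2,2\xi^2\}$, the symmetric solutions of \eqref{eq:dispersion_curves} form a two-dimensional space spanned by
\begin{equation*}
w^{(1)} := \tfrac{1}{2}(v^1-v^2) = \begin{pmatrix}\beta\cos(\beta x_2) \\ -\mathrm{i}\xi\sin(\beta x_2)\end{pmatrix}, \qquad
w^{(2)} := \tfrac{1}{2}(v^3+v^4) = \begin{pmatrix}\xi\cos(\gamma x_2) \\ \mathrm{i}\gamma\sin(\gamma x_2)\end{pmatrix}.
\end{equation*}
Since the symmetry makes the boundary conditions at $x_2=-\pi/2$ redundant, substituting $u=c_1 w^{(1)}+c_2 w^{(2)}$ into \eqref{eq:dispersion_curves1}--\eqref{eq:dispersion_curves2} at $x_2=\pi/2$ and using the identity $\xi^2-\beta^2=-2\gamma^2$ reduces the two boundary conditions to the system
\begin{equation*}
\begin{pmatrix}\gamma^2\sin(\beta\pi/2) & \xi\gamma\sin(\gamma\pi/2)\\ -\xi\beta\cos(\beta\pi/2) & \gamma^2\cos(\gamma\pi/2)\end{pmatrix}\begin{pmatrix}c_1\\c_2\end{pmatrix}=0.
\end{equation*}
Vanishing of the determinant, after division by $\beta\gamma^2$, is exactly the Rayleigh--Lamb equation \eqref{eq:Rayleigh-Lamb}. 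Choosing $c_1=\gamma^2\cos(\gamma\pi/2)$, $c_2=\xi\beta\cos(\beta\pi/2)$ satisfies the second row of the system and, after multiplication of the resulting $u$ by $\mathrm{i}$, produces the eigenfunction \eqref{eq:eigenfuncion_elast}. The borderline values $\omega\in\{\xi^2,2\xi^2\}$ are treated by the same recipe starting from the fundamental systems of cases (2) and (3) above; the resulting determinants coincide with \eqref{eq:Rayleigh-Lamb} under the standard conventions $\beta^{-1}\sin(\beta\pi/2)\to\pi/2$ as $\beta\to 0$ and analogously for $\gamma$. The exclusion $0\notin\sigma(A_\varnothing^{(2)}(\xi))$ follows similarly from cases (4) and (5).

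For (2), simplicity of each eigenvalue is an immediate by-product: the eigenspace equals the kernel of the corresponding $2\times 2$ boundary matrix, which is one-dimensional as long as the matrix itself is nonzero. In the generic case, simultaneous vanishing of the first column would force $\sin(\beta\pi/2)=\cos(\beta\pi/2)=0$, which is impossible. The main obstacle I anticipate is the bookkeeping for the degenerate cases $\omega\in\{\xi^2,2\xi^2\}$ and $\xi=0$: although conceptually routine, one must verify in each case, using the appropriate fundamental system, that the corresponding boundary matrix has rank exactly one on its zero set and that its determinant coincides with the continuous extension of \eqref{eq:Rayleigh-Lamb}.
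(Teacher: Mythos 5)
The paper gives no proof of Lemma~\ref{lemma:spectrum}: it simply remarks that the assertion is well known and moves on. Your plan is therefore not ``the same approach as the paper''; it is a genuine proof where the paper supplies none, and it is the natural direct verification. I checked the core computation in the generic case $\omega\notin\{0,\xi^2,2\xi^2\}$: the symmetric basis $w^{(1)},w^{(2)}$, the resulting $2\times 2$ boundary matrix, its determinant, and the choice $c_1=\gamma^2\cos(\gamma\pi/2)$, $c_2=\xi\beta\cos(\beta\pi/2)$ producing (after multiplication by $\mathrm i$) exactly \eqref{eq:eigenfuncion_elast}, all check out. The deferral of the degenerate values $\omega\in\{\xi^2,2\xi^2\}$ and $\omega=0$ is explicitly flagged by you, which is reasonable for a plan.

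However, there are two genuine gaps you should not wave away. First, your argument for simplicity is wrong as stated. You claim that vanishing of the first column $\bigl(\gamma^2\sin(\beta\pi/2),\,-\xi\beta\cos(\beta\pi/2)\bigr)^T$ forces $\sin(\beta\pi/2)=\cos(\beta\pi/2)=0$, but the second entry carries the factor $\xi\beta$, so the column also vanishes when $\xi=0$ and $\sin(\beta\pi/2)=0$ (which can happen, e.g.\ $\xi=0$, $\omega=4k^2$). The correct argument must rule out the whole matrix being zero, not only one column. This is still elementary: if all four entries vanished in the generic case one would need $\xi=0$, $\sin(\beta\pi/2)=0$ and $\cos(\gamma\pi/2)=0$, hence $\beta\in2\ZZ$ and $\gamma\in2\ZZ+1$; but $\xi=0$ gives $\beta^2=2\gamma^2$, so $2k^2=(2m+1)^2$, a parity contradiction. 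You need this style of argument (using all entries, plus $\beta^2=2\gamma^2$ when $\xi=0$) rather than the first-column shortcut.

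Second, you characterise $\sigma(A_\varnothing^{(2)}(\xi))$ by imposing only the two boundary conditions on symmetric solutions, i.e.\ you are really working in $h_{\mathrm s}$, not in $h_2$. For $\omega\neq 2\xi^2$ this is fine, because the reducing decomposition $A_\varnothing^{(\mathrm s)}(\xi)=A_\varnothing^{(1)}(\xi)\oplus A_\varnothing^{(2)}(\xi)$ together with part~(1) forces any eigenfunction to eigenvalue $\omega\neq2\xi^2$ into $h_2$ automatically; alternatively, one can check directly that $\int_{-\pi/2}^{\pi/2}u_1\,\mathrm dx_2 = 2\beta\cdot(\text{LHS of \eqref{eq:Rayleigh-Lamb}})=0$ for your particular $(c_1,c_2)$. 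But for $\omega=2\xi^2$ this argument breaks down: the constant $(1,0)^T$ is then a symmetric eigenfunction satisfying both boundary conditions, so the orthogonality constraint $\int u_1\,\mathrm dx_2=0$ must be imposed as an explicit third equation in that case, and the resulting condition ($\cos(\beta\pi/2)=0$ if $\xi\neq0$) is what matches the $\gamma\to0$ limit of \eqref{eq:Rayleigh-Lamb}. Your write-up never mentions the $h_1$-orthogonality constraint at all; you should state where it is automatic and where it must be added.
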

The assertion of  Lemma \ref{lemma:spectrum} is well known. 
In what follows for  real $\xi \in \RR$ we denote by $\zeta_1(\xi) < \zeta_2(\xi) < \ldots $ the (simple) eigenvalues of the operator 
$A_\varnothing^{(2)} (\xi)$. Obviously the functions $\zeta_k(\cdot)$ are   real analytic. 
Let  
\begin{equation}
	\Lambda := \inf \{ \zeta_1  (\xi): \xi \in \RR \} . 
\end{equation}
F\"orster and Weidl showed in \cite{FoeWeidl} that $\Lambda=1.887837\pm10^{-6}  > 0$ and that the infimum
is achieved for  $\xi = \pm \varkappa$, where 
\begin{equation}
	\varkappa =  0.632138\pm 10^{-6} >0 . 
\end{equation}
In particular, from \eqref{eq:spectrum_direc_integral} and the invariance of the essential spectrum we obtain 
\begin{equation}\label{eq:ess_spec_A_02}
	\sigma(A_\varnothing^{(2)} ) = \sigma(A_{\Gamma}^{(2)}) = [\Lambda  , \infty ) \subsetneq [ 0, \infty) 
\end{equation}
for any crack $\Gamma \subseteq \RR$, which is given by a finite union of bounded intervals. 

\subsection{The three-dimensional case}
In this case the unperturbed elasticity operator acts as   the matrix differential operator 
\begin{equation*}
	- \begin{pmatrix} 2 \partial_1^2 + \partial_2^2 + \partial_3^2 & \partial_1 \partial_2 & \partial_1 \partial_3 \\[4pt] 
		\partial_1 \partial_2 & \partial_1^2 + 2 \partial_2^2 + \partial_3^2 & \partial_2 \partial_3 \\[4pt]
		\partial_1 \partial_3 & \partial_2 \partial_3 & \partial_1^2 + \partial_2^2 + 2 \partial_3^2 \end{pmatrix} 
\end{equation*}
with the operator  domain 
\begin{align*}
	D(A_\varnothing ) = \left\{ u \in H^2(\Omega;\CC^3 ) : \partial_3 u_k  + \partial_k u_3  = 0 \text{ on }  \partial \Omega 
	\text{ for } k= 1, 2,3  \right\}  . 
\end{align*}
Applying the Fourier transform with respect to the first two variables we obtain
a  family of sectorial operators $(A_\varnothing (\xi))_{\xi \in \RR^2}$ acting on $L^2(I; \CC^3)$,   
\begin{equation}
	A_\varnothing (\xi) : = \begin{pmatrix}  2 \xi_1^2 + \xi_2^2 - \partial_3^2 & \xi_1 \xi_2 & - \mathrm{i} \xi_1 \partial_3 \\[4pt]
	 \xi_1 \xi_2 	&  \xi_1^2 + 2 \xi_2^2 - \partial_3^2	& - \mathrm{i} \xi_2 \partial_3 \\[4pt]
	- \mathrm{i} \xi_1 \partial_3 & - \mathrm{i} \xi_2 \partial_3 & \xi_1^2  + \xi_2^2 - 2 \partial_3^2 . 
	\end{pmatrix} . \end{equation}
The domain of the operator $D(A_\varnothing (\xi))$ consists of those functions $u \in H^2( I; \CC^3)$, which satisfy 
\begin{equation}
	\begin{pmatrix} \partial_3 u_1  + \mathrm{i} \xi_1 u_3 \\
	\partial_3 u_2 +  \mathrm{i} \xi_2 u_3 \\
	\partial_3 u_3 \end{pmatrix} (x_3)  = \begin{pmatrix} 0 \\ 0 \\ 0 \end{pmatrix} \quad  \text{for } x_3 = \pm \frac\pi2 . 
\end{equation}
As above we define 
\begin{align}
	h_{\mathrm{s}} &:= \{ u \in L_2(I; \CC^3) : u_k (x_3) = u_k(-x_3) \text{ for } k =1,2\; \wedge \;  u_3(x_3) = - u_3(-x_3) \} ,\\
  	h_{\mathrm{as}} &:= \{ u \in L_2(I; \CC^3) :  u_k (x_3) = - u_k(-x_3) \text{ for } k =1,2 \; \wedge \;  u_3(x_3) = u_3(-x_3) \} ,
\end{align}
as well as
\begin{align} 
  	h_1 := \left\{ c \cdot \left(\begin{array}{c} 1\\ 0 \\ 0 \end{array} \right): c\in \CC \right\}, \qquad \qquad 
 	 h_2 := \left\{ u \in h_{\mathrm{s}}: \int_{-\frac\pi2}^{\frac\pi2} u_k(x_3) \; \mathrm{d} x_3 = 0, \;  k=1,2 \right\}.
\end{align}
The operator $A_\varnothing (\xi)$ admits the following decomposition  
$$ A_\varnothing (\xi) = A_\varnothing ^{(\mathrm s)}(\xi) \oplus A_\varnothing ^{(\mathrm{as})}(\xi) =  A_\varnothing ^{(1)}(\xi) \oplus A_\varnothing ^{(2)}(\xi)  \oplus  A_\varnothing ^{(\mathrm{as})}(\xi), $$
where $A_\varnothing ^{(\dagger)}(\xi)$ acts in $h_{\dagger}$ for $\dagger \in \{ \mathrm{s}, \mathrm{as}, 1,2 \}$. We have
$$ A_\varnothing \cong \int^\oplus_\RR A_\varnothing(\xi) \; \mathrm{d} \xi \qquad \text{and} \qquad A_\varnothing^{(\dagger)} \cong \int^\oplus_\RR A_\varnothing^{(\dagger)}(\xi) \; \mathrm{d} \xi $$
for  $\dagger \in \{\mathrm{s}, \mathrm{as},1,2\}$. 
\begin{lemma}\label{lemma:spectral_elast_3D}
	For  $\xi \in \RR^2$ and $M \in \mathrm{SO}(2)$ the following assertions hold true:
	\begin{enumerate}
  		\item Let $u = (u_1,u_2,u_3) \in H^2(I; \CC^3)$. Then 
  		\begin{equation}\label{eq:rotational_A(xi)}
  			A_\varnothing (M\xi) \begin{pmatrix} M & 0 \\ 0 &1 \end{pmatrix} u  = \begin{pmatrix} M & 0 \\ 0 &1 \end{pmatrix} A_\varnothing (\xi) u . 
  		\end{equation}
		\item We have $\sigma(A_\varnothing^{(2)}(M \xi)) =  \sigma(A_\varnothing^{(2)}(\xi))$.  More precisely, 
		$\omega$ is an eigenvalue of the operator $A_\varnothing^{(2)}(\xi)$ if and only if 
		\begin{align} \label{eq:Rayleigh-Lamb_3D}
			\frac{\sin \left(\beta \frac{\pi}2\right)}{\beta}  \cos \left(\gamma \frac{\pi}2\right)
   	   		\gamma^2 + \cos \left(\beta \frac{\pi}2\right) \frac{\sin \left(\gamma \frac{\pi}2\right)}{\gamma} |\xi|^2 = 0, 
  		\end{align}
  	where 
  	$$ \beta = \sqrt{ \omega  - |\xi|^2} , \qquad \gamma = \sqrt{ \frac{\omega}2 - |\xi|^2}.$$ 
	\end{enumerate}
\end{lemma}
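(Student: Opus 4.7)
The plan is to deduce (2) from (1), so I would start with the intertwining identity \eqref{eq:rotational_A(xi)}. A convenient organisation of the calculation is to write
\begin{equation*}
A_\varnothing(\xi) = \begin{pmatrix} (|\xi|^2 - \partial_3^2) I_2 + \xi \xi^\top & -\mathrm{i} \partial_3 \, \xi \\ -\mathrm{i} \partial_3\, \xi^\top & |\xi|^2 - 2\partial_3^2 \end{pmatrix},
\end{equation*}
viewing $\xi \in \RR^2$ as a column vector. The identities $M \xi \xi^\top M^\top = (M\xi)(M\xi)^\top$, $M^\top M = I_2$, and $|M\xi| = |\xi|$ then allow one to multiply out the $3\times 3$ block products $A_\varnothing(M\xi) \cdot \mathrm{diag}(M,1)$ and $\mathrm{diag}(M,1) \cdot A_\varnothing(\xi)$ and verify block-by-block that they coincide. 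The same rotation maps the boundary condition for $A_\varnothing(\xi)$ precisely to that of $A_\varnothing(M\xi)$, since the two-dimensional tuple $(\partial_3 u_k + \mathrm{i} \xi_k u_3)_{k=1,2}$ transforms as a vector under $M$.

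For the first claim of (2), set $U_M := \mathrm{diag}(M, 1)$. Since $M$ is orthogonal, $U_M$ is unitary on $L_2(I;\CC^3)$. Moreover $U_M$ preserves $h_2$: the parity conditions defining $h_{\mathrm s}$ are respected because $M$ only mixes the two components $u_1, u_2$ (both even in $x_3$) and leaves $u_3$ (odd in $x_3$) alone, while the zero-average conditions $\int u_k = 0$ for $k=1,2$ that cut out $h_2$ inside $h_{\mathrm{s}}$ characterise the orthogonal complement of the $\mathrm{SO}(2)$-invariant subspace $\{(c_1, c_2, 0) : c_1, c_2 \in \CC\}$. Combined with (1) this yields $A_\varnothing^{(2)}(M\xi) = U_M\, A_\varnothing^{(2)}(\xi)\, U_M^{-1}$, and hence the equality of spectra.

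For the Rayleigh--Lamb characterisation I would exploit the $\mathrm{SO}(2)$-invariance to rotate to $\xi = (|\xi|, 0)$. At this value the $(1,2)$, $(2,1)$, $(2,3)$ and $(3,2)$ entries of $A_\varnothing(|\xi|, 0)$ vanish, so the operator splits into a $2\times 2$ block on $(u_1, u_3)$ which coincides with the two-dimensional elasticity operator of Section \ref{subsec:dispersion_curves} at scalar Fourier parameter $|\xi|$, together with a decoupled scalar operator $|\xi|^2 - \partial_3^2$ acting on $u_2$ with Neumann boundary conditions. Restricting to $h_2$, the Rayleigh--Lamb branch of the three-dimensional spectrum is produced by the two-dimensional block, and Lemma \ref{lemma:spectrum} applied to it yields \eqref{eq:Rayleigh-Lamb_3D} with $|\xi|^2$ in place of $\xi^2$; the corresponding eigenfunction is obtained from \eqref{eq:eigenfuncion_elast} by the inverse rotation $U_M^{-1}$.

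The calculations are essentially routine. The only point requiring some care is the bookkeeping of the symmetry subspaces, in particular the identification of the two-dimensional symmetric subspace $h_2$ of Section \ref{subsec:dispersion_curves} with the invariant part $\{(u_1, 0, u_3) : u_1 \text{ even},\ u_3 \text{ odd},\ \int u_1 = 0\}$ of the three-dimensional $h_2$ at $\xi = (|\xi|, 0)$, which is what allows the two-dimensional Rayleigh--Lamb equation to transfer to the three-dimensional setting via the substitution $\xi^2 \mapsto |\xi|^2$.
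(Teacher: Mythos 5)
The paper does not actually supply a proof of this lemma; it is stated as a known fact in parallel with the two-dimensional Lemma~\ref{lemma:spectrum}, which the paper also declares well known. So the comparison here is to the argument the paper implicitly has in mind rather than to a written proof.

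Your computation for part~(1) and the unitary-equivalence argument for the first half of part~(2) are correct. The issue is the Rayleigh--Lamb ``if and only if'' in~(2). You correctly observe that at $\xi = (|\xi|,0)$ the operator block-decomposes into the two-dimensional block on $(u_1,u_3)$ together with a decoupled scalar shear-horizontal block $|\xi|^2-\partial_3^2$ acting on $u_2$ with Neumann conditions. You then say only that ``the Rayleigh--Lamb branch of the three-dimensional spectrum is produced by the two-dimensional block,'' leaving the SH block's contribution to $\sigma(A_\varnothing^{(2)}(\xi))$ unaddressed. With $h_2$ as defined in the paper (both $u_1$ and $u_2$ even with zero mean, $u_3$ odd), the restriction of the SH block to $h_2$ produces genuine eigenvalues $\omega=|\xi|^2+4k^2$, $k\geq 1$, with eigenfunctions $(0,\cos(2kx_3),0)^T$. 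These do \emph{not} solve \eqref{eq:Rayleigh-Lamb_3D}: for $\beta=2k$ the first term vanishes and the second term $\cos(k\pi)\,\gamma^{-1}\sin(\gamma\pi/2)\,|\xi|^2$ generically does not. So as written, the ``if and only if'' excludes an entire family of eigenvalues that, by the paper's own definition of $h_2$, do belong to $A_\varnothing^{(2)}(\xi)$. A complete proof therefore has to take a position on the SH modes: either point out that they lie strictly above $\Lambda$ (indeed $|\xi|^2 + 4k^2 \geq 4 > \Lambda \approx 1.888$, so they never affect the threshold analysis and can be split off by one more symmetry reduction), or note that the lemma as formulated is slightly too strong. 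You have effectively discovered this, but the proposal should flag it explicitly rather than silently discarding the SH block; otherwise the ``only if'' direction of the claim is not established. Everything else in the proposal is sound.
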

Lemma \ref{lemma:spectral_elast_3D} implies that the  eigenvalues of the operator $A_\varnothing^{(2)}(\xi)$ depend only on the
norm  of $\xi \in \RR^2$ and coincide with the eigenvalues  of the operator $A_\varnothing^{(2)}(|\xi|)$ arising in two dimensions.
For $\xi =( |\xi|  \cos \alpha, |\xi|  \sin \alpha)^T$, $\alpha \in [0, 2\pi)$, 
and $\gamma \neq 0$  the eigenfunctions  of the operator $A_\varnothing^{(2)}(\xi)$ are  given by 
\begin{equation}\label{eq:eigenfunction_elast_3D}
	x_3 \mapsto   \begin{pmatrix} M_\alpha & 0  \\ 0 & 1  \end{pmatrix}   
  		\begin{pmatrix} \mathrm{i} \gamma^2 \beta \cos\left(\frac\pi2 \gamma\right) \cos\left(\beta x_3 \right) 
  		+ \mathrm{i} |\xi|^2 \beta \cos\left(\frac\pi2 \beta \right) \cos\left(\gamma x_3 \right) \\[6pt] 0 \\[6pt] 
  		|\xi| \gamma^2  \cos  \left(\frac\pi2 \gamma\right) \sin \left(\beta x_3 \right)  - |\xi| \beta \gamma
  		\cos \left(\frac\pi2 \beta \right) \sin \left(\gamma x_3 \right)  \end{pmatrix} ,
\end{equation} 
where we put 
$$ M_\alpha  = \begin{pmatrix} \cos \alpha & - \sin \alpha  \\ \sin \alpha &\cos \alpha  \end{pmatrix} \in \mathrm{SO}(2). $$
Recall that 
\begin{equation*} 
  		x_2 \mapsto \begin{pmatrix} \mathrm{i} \gamma^2 \beta \cos\left(\frac\pi2 \gamma\right) \cos\left(\beta x_2\right) 
  		+ \mathrm{i} |\xi|^2 \beta \cos\left(\frac\pi2 \beta \right) \cos\left(\gamma x_2 \right) \\[6pt]  
  		|\xi| \gamma^2  \cos  \left(\frac\pi2 \gamma\right) \sin \left(\beta x_2 \right)  - |\xi| \beta \gamma
  		\cos \left(\frac\pi2 \beta \right) \sin \left(\gamma x_2\right)  \end{pmatrix} 
\end{equation*} 
is  an  eigenfunction of  the operator $A_\varnothing^{(2)} (|\xi|)$. In particular,  we obtain for the three-dimensional plate 
$$ \sigma_{\mathrm{ess}} (A_\varnothing^{(2)}) = [\Lambda , \infty) , $$
with the same constant $\Lambda > 0$ as for the infinite strip. 

\section{Proof of the main result - 2D}
\subsection{An equivalent  mixed problem}
The aim of this section is the construction of the Dirichlet-to-Neumann operator corresponding to
our problem. For this purpose we provide the boundary data on the crack and construct the solution of the inhomogenuous problem on the
upper part of the strip $\Omega_+ := \RR \times I_+$ with $I_+ :=  (0, \frac\pi2)$. This will be sufficient as 
we are only interested in the symmetric part of the operator. The latter is unitarly equivalent  to the operator $A_{\Gamma+}$, which is induced by the quadratic form
\begin{align}
	a_{\Gamma+} [u] &:= \int_{\Omega_+} \scal{E(u)}{\Sigma(u)}_{2 \times 2}   \; \mathrm{d}  x
\end{align}
with $ u \in D[ a_{\Gamma+}] := \{ u \in H^1(\Omega_+;\CC^2) : \mathrm{supp} ( u_2|_{\RR \times \{0\}}) \subseteq \overline{\Gamma} \}. $
Here $E(u)$ and $\Sigma(u)$ are given as in \eqref{def:strain} and \eqref{def:stress}.
Then $A_{\Gamma+}$ acts as the elasticity operator  $- \Delta - \; \mathrm{grad} \ \mathrm{div}$ and its domain contains exactly those functions 
$u \in H^1(\Omega_+;\CC^2)$ 
which satisfy 
\begin{align}
	&\left\{  \begin{array}{rll} \displaystyle \partial_1 u_2 + \partial_2 u_1 &= 0   &   \text{on } \RR \times \left\{\frac\pi2 \right\},  \\[4pt]
	2 \partial_2 u_2  &= 0  &   \text{on } \RR \times \left\{\frac\pi2\right\}, \end{array} \right. 
	\intertext{and}
	&\left\{  \begin{array}{rll} \displaystyle - ( \partial_1 u_2 +  \partial_2 u_1 ) & = 0 &  \text{on } \RR \times\{0\}, \\[4pt]
 	 -2 \partial_2 u_2 &= 0 &  \text{on } \Gamma ,\\[4pt]
 	 u_2 &= 0  &   \text{on } (\RR \times \{0\}) \backslash \overline{\Gamma }. \end{array} \right. 
 \end{align} 
As before these identities  should be understood in the weak sense. As a particular consequence  we have reduced the original problem to a mixed problem. 
The Hilbert space $\mathcal H_+ := L_2(\Omega_+; \CC^2)$ decomposes 
into an orthogonal sum
$$ \mathcal H_+ = \mathcal H_{1+} \oplus \mathcal H_{2+} $$
with 
\begin{equation}\label{eq:symmetry_omega_+}
  \mathcal H_{2+} := \left\{ u \in L_2(\Omega_+;\CC^2): \int_0^{\frac\pi2}  u_1(x_1,x_2 ) \; \mathrm{d}  x_2 = 0 
  \text{ for a.e. } x_1 \in \RR \right\}
\end{equation} 
and $\mathcal H_{1+} := (\mathcal H_{2+})^\perp$.
The form and the operator decompose in the same way in an orthogonal sum
$$  a_{\Gamma+} =  a_{\Gamma+}^{(1)} \oplus  a_{\Gamma+}^{(2)} , \qquad 
A_{\Gamma+} =  A_{\Gamma+}^{(1)} \oplus  A_{\Gamma+}^{(2)} ,$$ 
where $a_{\Gamma+}^{(\dagger)}$ and  $A_{\Gamma+}^{(\dagger)}$ acts in $\mathcal H_{\dagger+}$ for $\dagger \in \{1,2\}$. 
Next  we define the analogues of the spaces $h_i$, $i=1,2$ which were introduced in  \eqref{def:h_i}. We  denote by 
$h_{1+}$  the linear span of the constant function $(1,0)^T$ and let 
$h_{2+} = h_{1+}^\perp$, 
\begin{equation}
	h_{2+} = \left\{ u \in L_2(I_+;\CC^2) : \int_0^{\frac\pi2} u_1(x_2) \; \mathrm{d}  x_2 = 0 \right\} .
\end{equation}
The Fourier transform in the horizontal direction leads as before  to the parameter-dependent operator 
$$ A_{\varnothing+}(\xi) := \begin{pmatrix} 2 \xi^2 - \partial_2^2 & - \mathrm{i} \xi \partial_2 \\  - \mathrm{i} \xi \partial_2 &
	\xi^2 -  2 \partial_2^2 \end{pmatrix} $$
which acts in the Hilbert space $L_2(I_+;\CC^2)$ having  the operator domain 
$$ D(A_{\varnothing+}(\xi)) := \{ u \in H^2(I_+;\CC^2) : \mathrm{i} \xi u_2  + \partial_2 u_1= 0  \; \wedge \;   \partial_2 u_2 = 0 
	\text{ on } \{0 , \pi/2\} \}  .  $$
We note that the   subspaces $h_{1+}$ and $h_{2+}$ reduce the operator $A_{\varnothing+}(\xi)$, giving rise to self-adjoint operators
$A_{\varnothing+}^{(1)}(\xi)$ and $A_{\varnothing+}^{(2)}(\xi)$. Clearly, the operators $A_{\varnothing+}^{(i)}(\xi)$ and $A_{\varnothing}^{(i)}(\xi)$, $i =1,2$,  are unitarily equivalent. 

\subsection{The construction of the Dirichlet-to-Neumann operator}
In the first instance we consider  $\omega \in \CC \backslash\{0\}$ and let  $g \in H^{1/2}(\RR)$. 
We want to search for a solution $u \in H^1(\Omega_+ ;\CC^2)$ of the eigenvalue problem 
\begin{equation}\label{eq:solution_operator1}
  (- \Delta - \; \mathrm{grad} \ \mathrm{div}) u =   \omega u  , \qquad \text{in } \Omega_+  ,
\end{equation}
which satisfies the boundary data
\begin{align}\label{eq:solution_operator2}
  	\left\{ \begin{array}{rll} \partial_2 u_1 + \partial_1 u_2 &= 0  & \text{on } \RR \times \left\{\frac\pi2\right\} ,
 		\\[4pt] 2 \partial_2 u_2 & = 0  & \text{on } \RR \times \left\{\frac\pi2\right\} , \end{array} \right. \\ 
    \left\{ \begin{array}{rll} \partial_2 u_1 + \partial_1 u_2 &= 0 & \text{on } \RR \times \left\{0 \right\} ,
 		\\[2pt]  u_2 & = g & \text{on } \RR \times \left\{0 \right\} , \end{array} \right.  
 	\label{eq:solution_operator3}
\end{align}
Note that  conditions \eqref{eq:solution_operator2}-\eqref{eq:solution_operator3} should be understood in their 
variational form, i.e., the assertions \eqref{eq:solution_operator1}-\eqref{eq:solution_operator3} are equivalent 
to 
\begin{align}\label{eq:Poisson_variational}
	\int_{\Omega_+} \scal{E(u)}{\Sigma(v) }_{2\times 2}  \; \mathrm{d}  x = \omega \scal{u}{v}_{\Omega_+}  , 
\end{align}
which should  hold for all $v \in H^1(\Omega_+;\CC^2)$ with $v_2|_{\RR \times \{0\}} = 0$. 
Let $\hat u$ be the Fourier transform of $u$ taken in the horizontal direction. Then we have for $(\xi ,x_2) \in \RR \times I_+$
\begin{align}
	\label{eq:Poisson_FT1_elast}
	A(\xi) \hat u(\xi,x_2 ) &= \omega \hat u(\xi,x_2) , 
\end{align}
as well as  
\begin{align}
  	\left\{ \begin{array}{rl} \partial_2 \hat  u_1 \left(\xi, \frac\pi2 \right) + \mathrm{i} \xi \hat u_2  \left(\xi, \frac\pi2 \right) &= 0 ,
		\qquad \xi \in \RR \\[6pt] 2 \partial_2 \hat  u_2 \left(\xi, \frac\pi2 \right)& = 0  ,
		\qquad \xi \in \RR \hspace{-0.05cm} \phantom{\hat g(\xi)}  \end{array} \right.  &   \\[6pt] 
    \left\{ \begin{array}{rl} \partial_2 \hat  u_1 (\xi, 0) + \mathrm{i} \xi \hat  u_2  (\xi, 0) &= 0 , \qquad \xi \in \RR 
 		\\[5pt]  \hat  u_2  (\xi, 0) & = \hat g(\xi) , \quad \xi \in \RR  .  \end{array} \right.  & 
 		\label{eq:Poisson_FT3_elast}
\end{align}
Note that this already implies $\hat u(\xi, \cdot) \in C^{\infty} \left(\left[0, \frac\pi2 \right]; \CC^2 \right)$ for almost every 
$\xi \in \RR$. Since   $\omega \neq 0$ the solution $\hat u$ is for $\xi^2  \notin \{ \omega, \frac\omega2\}$ 
a linear combination of 
\begin{align*}
   v^1(x_2) &:= \left(\begin{array}{c}  \beta \\ -  \xi \end{array} \right) e^{\mathrm{i}\beta x_2};
  &v^3(x_2) &:= \left(\begin{array}{c}  \xi \\  \gamma  \end{array} \right) e^{\mathrm{i} \gamma x_2};\\[8pt]
  v^2(x_2) &:= \left(\begin{array}{c} - \beta \\ - \xi \end{array} \right) e^{- \mathrm{i}\beta x_2};  
  & v^4(x_2) &:= \left(\begin{array}{c}  \xi \\ - \gamma \end{array} \right) e^{- \mathrm{i} \gamma x_2} ,
\end{align*}
where  
$$ \beta= \sqrt{ \omega  - \xi^2}  \qquad \text{and} \qquad \gamma = \sqrt{ \frac{\omega}2 - \xi^2}.$$
Thus, we obtain for $(\xi, x_2)  \in \RR \times I_+$ 
$$  \hat u (\xi, x_2) = \sum_{k=1}^4 a_{k} (\xi,\omega) v^k(x_2)    $$
with  coefficients $a_1(\xi, \omega) , \ldots, a_4(\xi, \omega)$. Inserting the  boundary conditions  leads to a linear system 
$ L(\xi, \omega) a(\xi, \omega ) = b(\xi)$, where 
\begin{equation}
	L(\xi, \omega) := \left( \begin{array}{cccc}
  (\beta^2 - \xi^2) \ \mathrm{e}^{\mathrm{i} \beta \frac{\pi}2}  & (\beta^2 - \xi^2) \  \mathrm{e}^{-\mathrm{i} \beta \frac{\pi}2}  & 
	2\gamma\xi  \ \mathrm{e}^{\mathrm{i} \gamma \frac\pi2} & - 2\gamma \xi  \ \mathrm{e}^{-\mathrm{i} \gamma \frac\pi2}\\[4pt]
  - 2 \beta \xi \ \mathrm{e}^{\mathrm{i} \beta \frac{\pi}2} & 2 \beta \xi \ \mathrm{e}^{-\mathrm{i} \beta \frac{\pi}2} & 2 \gamma^2 \ \mathrm{e}^{\mathrm{i} \gamma \frac\pi2} & 
	2 \gamma^2 \ \mathrm{e}^{-\mathrm{i} \gamma \frac\pi2} \\[4pt]
   \mathrm{i} (\beta^2 - \xi^2) &  \mathrm{i} (\beta^2 - \xi^2)	&  2 \mathrm{i} \gamma \xi	& -2 \mathrm{i} \gamma \xi \\[4pt]
  - \xi  & - \xi   &  \gamma & - \gamma\\[4pt]
\end{array}\right)
\end{equation}
and
$$ a(\xi, \omega )  := \left( \begin{array}{c} a_1(\xi, \omega )  \\ a_2(\xi, \omega )  \\ a_3(\xi, \omega )  \\ a_4(\xi, \omega )  \end{array} \right), \qquad
b(\xi) := \left(\begin{array}{c} 0\\ 0 \\ 0 \\ \hat g(\xi) \end{array} \right) . $$
Thus, $a(\xi, \omega ) = L (\xi, \omega )^{-1} b(\xi)$ provided $L(\xi)$ is invertible. We note that the determinant of $L(\xi)$ is given by 
$$ \det( L(\xi, \omega )) = 32  \gamma^2 (\gamma^2 + \xi^2)  \  \Bigr[ \sin \left(\beta \frac{\pi}2\right) \cos \left(\gamma \frac{\pi}2\right)
  \gamma^3 + \cos \left(\beta \frac{\pi}2\right) \sin \left(\gamma \frac{\pi}2\right) \beta \xi^2 \Bigr] , 
$$
which coincides up to a factor with  the left-hand  side of the  Rayleigh-Lamb equation \eqref{eq:Rayleigh-Lamb}. 
In particular if $\omega \in \CC \backslash [0,\infty)$  then 
$L(\xi, \omega )$ is invertible for every $\xi \in \RR$.
Put  
\begin{align*}
  	h (x)  &= - 2 \partial_2 u_2 (x ,0) , \qquad x \in \RR . 
\end{align*}
Then its Fourier transform $\hat h$ satisfies $\hat h(\xi) = R(\xi,\omega) a(\xi,\omega),$ where 
$$ R(\xi,\omega) = \left( \begin{array}{cccc}  
	2 \mathrm{i} \beta \xi		& - 2\mathrm{i} \beta \xi		& -2 \mathrm{i} \gamma^2 	& - 2\mathrm{i} \gamma^2 
	\end{array} \right) .
$$
An elementary calculation shows that 
$$ \hat h(\xi) = m_\omega(\xi) \hat g(\xi)$$
where 
  \begin{equation}
	  m_\omega(\xi) :=  \frac{-2 \sin\left(\frac{\beta\pi}2\right) \sin \left( \frac{\gamma\pi}2\right)
  \left[ \gamma^6 + 2 \gamma^2 \xi^4 + \xi^6\right] + 4 \left[ \cos\left( \frac{\beta\pi}2\right) \cos\left( \frac{\gamma\pi}2\right) - 1\right]
  \beta \gamma^3 \xi^2}{(\gamma^2 + \xi^2) [ \sin \left( \frac{\beta\pi}2\right) \cos \left( \frac{\gamma\pi}2\right)
  \gamma^3 + \cos \left( \frac{\beta\pi}2\right) \sin \left( \frac{\gamma \pi}2\right) \beta \xi^2] } , 
\end{equation}
As before $\beta$ and $\gamma$ depend on $\xi$ and we have  $ \beta= \sqrt{ \omega  - \xi^2}$ and $\gamma = \sqrt{ \frac{\omega}2 - \xi^2}$. 
As we do not need this  explicit representation   of $m_\omega$ for $\omega \neq 0$, we do not want to give the separate steps of the calculation. Now we  consider
the case $\omega = 0$. A fundamental solution of the differential equation 
$$ 	 A(\xi) \hat u(\xi,\cdot )  = 0 $$
is  given by the functions 
\begin{align*}
 	 v^1(x_2) &:= \left(\begin{array}{c} \xi \\ \mathrm{i} \xi  \end{array} \right) e^{-\xi x_2};
  	& v^3(x_2) &:= \left[x_2 \left(\begin{array}{c} \xi \\  \mathrm{i} \xi  \end{array} \right) +  
  	\left( \begin{array}{c}  0\\[4pt]   3 \mathrm{i} \end{array}\right)\right] e^{-\xi x_2};\\[8pt]
 	 v^2(x_2) &:= \left(\begin{array}{c} \xi  \\ - \mathrm{i} \xi \end{array} \right) e^{\xi x_2}; 
  	&v^4(x_2) &:= \left[x_2 \left( \begin{array}{c} \xi  \\ - \mathrm{i} \xi \end{array} \right) + 
  	 \left( \begin{array}{c}  0 \\[4pt]  3 \mathrm{i} \end{array}\right) \right] e^{ \xi x_2} .
\end{align*}
For the matrix $L(\xi,0)$ we obtain
\begin{equation}
	L(\xi,0) = \begin{pmatrix} 
	-2 \xi^2 \mathrm{e}^{- \xi \frac\pi2} & 	2 \xi^2 \mathrm{e}^{\xi \frac\pi2} &	(-2 \xi	- \pi \xi^2) \mathrm{e}^{-\xi \frac\pi2} &	(-2 \xi + \pi \xi^2) \mathrm{e}^{\xi\frac\pi2} \\[5pt]	
	- 2 \mathrm{i} \xi^2  \mathrm{e}^{- \xi \frac\pi2  } & 	- 2 \mathrm{i} \xi^2  \mathrm{e}^{\xi \frac\pi2  }
	&	- \mathrm{i} ( 4 \xi + \pi \xi^2 ) \mathrm{e}^{- \xi \frac\pi2  } & \mathrm{i} (4 \xi - \pi \xi^2) \mathrm{e}^{\xi \frac\pi2 }  \\[5pt]
		-2 \xi^2 	& 	2 \xi^2 & 	-2 \xi & 	-2 \xi \\[5pt]
	\mathrm{i} \xi & 	- \mathrm{i} \xi  & 	3\mathrm{i} &	3\mathrm{i} 
\end{pmatrix} . \end{equation}
Calculating explicitly   its inverse  we obtain for  $\xi \in \RR$
\begin{align*}
	 L(\xi,0)^{-1} &= 
	\begin{pmatrix} \makebox[1.4cm]{$\cdots$} & \makebox[1.4cm]{$\cdots$} & \makebox[1.4cm]{$\cdots$} &   
	\frac{\mathrm{i} \mathrm{e}^{ \pi\xi } ( - 2 + 2 \mathrm{e}^{ \pi\xi } + 2 \pi \xi + \pi^2 \xi^2) }
	{4 \xi ( - 1+ \mathrm{e}^{2\pi \xi} + 2 \mathrm{e}^{\pi \xi} + 2 \pi \xi \mathrm{e}^{\pi\xi})} \\[16pt]
	\cdots & \cdots & \cdots &   	\frac{\mathrm{i}  (  2 + 2 \mathrm{e}^{ \pi\xi }  ( -2 - 2\pi \xi + \pi^2 \xi^2) )}
	{4 \xi ( - 1+ \mathrm{e}^{2\pi \xi} + 2 \mathrm{e}^{\pi \xi} + 2 \pi \xi \mathrm{e}^{\pi\xi})} \\[16pt]
	\cdots & \cdots & \cdots &   - \frac{\mathrm{i} \mathrm{e}^{ \pi\xi } ( - 1 + \mathrm{e}^{ \pi\xi } + \pi \xi ) }
	{2 ( - 1+ \mathrm{e}^{2\pi \xi} + 2 \mathrm{e}^{\pi \xi} + 2 \pi \xi \mathrm{e}^{\pi\xi})} \\[16pt]
	\cdots & \cdots & \cdots &   - 	\frac{\mathrm{i}  ( -1  + \mathrm{e}^{ \pi\xi }  ( 1 + \pi \xi ) )}
	{2 ( - 1+ \mathrm{e}^{2\pi \xi} + 2 \mathrm{e}^{\pi \xi} + 2 \pi \xi \mathrm{e}^{\pi\xi})}  
	\end{pmatrix} .
\end{align*}
For the function $\hat u$ we have 
$$  \hat u (\xi, x_2) = \hat g(\xi) \cdot  \begin{pmatrix} v^1 ( x_2 ) \\[15pt]  v^2 ( x_2 ) 
	 \\[15pt] v^3 ( x_2 )  \\[15pt]  v^4 ( x_2 )  \end{pmatrix}^T \cdot  \begin{pmatrix}
	\frac{\mathrm{i} \mathrm{e}^{ \pi\xi } ( - 2 + 2 \mathrm{e}^{ \pi\xi } + 2 \pi \xi + \pi^2 \xi^2) }
	{4 \xi ( - 1+ \mathrm{e}^{2\pi \xi} + 2 \mathrm{e}^{\pi \xi} + 2 \pi \xi \mathrm{e}^{\pi\xi})} \\[16pt]
	\frac{\mathrm{i}  (  2 + 2 \mathrm{e}^{ \pi\xi }  ( -2 - 2\pi \xi + \pi^2 \xi^2) )}
	{4 \xi ( - 1+ \mathrm{e}^{2\pi \xi} + 2 \mathrm{e}^{\pi \xi} + 2 \pi \xi \mathrm{e}^{\pi\xi})} \\[16pt]
	  - \frac{\mathrm{i} \mathrm{e}^{ \pi\xi } ( - 1 + \mathrm{e}^{ \pi\xi } + \pi \xi ) }
	{2 ( - 1+ \mathrm{e}^{2\pi \xi} + 2 \mathrm{e}^{\pi \xi} + 2 \pi \xi \mathrm{e}^{\pi\xi})} \\[16pt]
	 - 	\frac{\mathrm{i}  ( -1  + \mathrm{e}^{ \pi\xi }  ( 1 + \pi \xi ) )}
	{2 ( - 1+ \mathrm{e}^{2\pi \xi} + 2 \mathrm{e}^{\pi \xi} + 2 \pi \xi \mathrm{e}^{\pi\xi})} \end{pmatrix}  . 
$$
In particular, we observe that $\hat u (\xi)$ does not have any singularities for $\xi \in \RR$. 
Since $ R(\xi,0) = \begin{pmatrix} 2 \mathrm{i} \xi^2 & 2 \mathrm{i} \xi^2 & 4 \mathrm{i} \xi & -4 \mathrm{i} \xi \end{pmatrix} $, 
we have  
\begin{align*}
	- 2 \partial_2 \hat u_2 (\xi  ,0) = m_0(\xi) \hat g(\xi) ,
\end{align*}
where 
\begin{align}
	m_0 (\xi) := \xi \cdot \frac{\cosh (\pi \xi) - \left( 1 + \frac{\pi^2 \xi^2}{2}\right) }{ \sinh(\pi \xi) + \pi \xi } . 
\end{align}
Until now we did not use the symmetry condition corresponding to the space  $\mathcal H_2$. 
Imposing this  additional symmetry we observe that  that $\hat u (\xi, \cdot) \in h_{2+}$ for almost  every $\xi \in \RR$,  and thus, 
\begin{equation*}\label{eq:relation_bdry_data}
	\left(\begin{array}{c} 
  		\mathrm{e}^{\mathrm{i} \beta \frac{\pi}2} - 1 \\[4pt]
  		\mathrm{e}^{- \mathrm{i} \beta \frac{\pi}2} - 1 \\[4pt]
  		\frac{\xi}\gamma \Bigr(\mathrm{e}^{\mathrm{i} \gamma \frac{\pi}2} - 1\Bigr)\\[6pt]
  		-\frac{\xi}\gamma \Bigr(\mathrm{e}^{- \mathrm{i} \gamma \frac{\pi}2} - 1\Bigr) 
	\end{array} \right)^T a(\xi, \omega ) = 0 
\end{equation*}
if  $\omega \neq 0$. An easy calculation shows that this condition is always satisfied.
This is due to the fact that the only solution of \eqref{eq:solution_operator1}-\eqref{eq:solution_operator3} in $\mathcal H_{1+}$ is the trivial one. 
Moreover, if $\omega \in [0, \Lambda)$ the boundary value problem \eqref{eq:solution_operator1}-\eqref{eq:solution_operator3} is 
not Fredholm. 
However, in the following lemma we observe that it becomes Fredholm by imposing the additional symmetry condition. 
\begin{lemma}\label{lemma:solution_op_elast}
Let $\omega \in \CC \backslash [\Lambda, \infty)$. Then for every $g \in H^{1/2} (\RR)$ 
there exists a unique $u \in H^1(\Omega_+;\CC^2) \cap \mathcal H_{2+}$ which solves
\eqref{eq:solution_operator1}-\eqref{eq:solution_operator3}, moreover we have $ \|u\|_{H^1(\Omega_+;\CC^2)} \le C(\omega) \| g\|_{H^{1/2}(\RR)}$ for some 
$C(\omega) > 0$ independent of $g$. 
\end{lemma}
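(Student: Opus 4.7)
The plan is to reduce the mixed boundary value problem \eqref{eq:solution_operator1}--\eqref{eq:solution_operator3} to a homogeneous-boundary variational problem for $A_{\varnothing+}^{(2)}-\omega$, and then use the fact that this operator is boundedly invertible in the appropriate sense. The crucial spectral input is that $A_{\varnothing+}^{(2)}$, being unitarily equivalent via symmetric extension across $x_2=0$ to the symmetric part $A_\varnothing^{(2)}$, has spectrum $[\Lambda,\infty)$; hence for $\omega\in\CC\setminus[\Lambda,\infty)$ it is invertible.

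Concretely, I would first pick a lifting $w \in H^1(\Omega_+;\CC^2)$ of the Dirichlet datum with $w_2|_{\RR\times\{0\}} = g$, $w_1 \equiv 0$, and $\|w\|_{H^1} \le C\|g\|_{H^{1/2}(\RR)}$; for instance take $w(x_1,x_2) := (0,\, (1 - 2x_2/\pi)\,\widetilde g(x_1,x_2))$ where $\widetilde g \in H^1(\Omega_+)$ is any trace-theorem extension of $g$. Because $w_1 \equiv 0$, automatically $w \in \mathcal{H}_{2+}$. Setting $\tilde u := u - w$, the variational identity \eqref{eq:Poisson_variational} is equivalent to finding $\tilde u \in D[a_{\varnothing+}^{(2)}]$ with
\[
a_{\varnothing+}^{(2)}[\tilde u, v] - \omega \langle \tilde u, v\rangle_{\Omega_+} \;=\; -a_{\varnothing+}^{(2)}[w, v] + \omega\, \langle w, v \rangle_{\Omega_+} \;=:\; F(v) \qquad \text{for all } v \in D[a_{\varnothing+}^{(2)}],
\]
where the right-hand side is bounded as an antilinear functional on $D[a_{\varnothing+}^{(2)}]$ with $\|F\| \le C(\omega) \|g\|_{H^{1/2}(\RR)}$, directly from the continuity of $a_{\varnothing+}^{(2)}$ on $H^1$.

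Since $\omega \in \CC \setminus [\Lambda,\infty) = \CC \setminus \sigma(A_{\varnothing+}^{(2)})$, the resolvent $(A_{\varnothing+}^{(2)} - \omega)^{-1}$ is bounded and, by the first representation theorem combined with a standard continuity argument, extends to a bounded map from the dual of $D[a_{\varnothing+}^{(2)}]$ to $D[a_{\varnothing+}^{(2)}]$ itself. This yields a unique $\tilde u \in D[a_{\varnothing+}^{(2)}]$ solving the variational equation, with $\|\tilde u\|_{H^1} \le C(\omega)\|g\|_{H^{1/2}(\RR)}$. Setting $u := \tilde u + w$ then gives the asserted solution in $H^1(\Omega_+;\CC^2)\cap\mathcal{H}_{2+}$, and uniqueness follows by the same spectral argument: any two solutions differ by an $L^2$-eigenfunction of $A_{\varnothing+}^{(2)}$ at $\omega$, which is necessarily zero.

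The delicate point, already flagged in the excerpt, is that the unrestricted mixed problem is \emph{not} Fredholm for $\omega \in [0,\Lambda)$: this range lies inside $\sigma(A_{\varnothing+}^{(1)}) = [0,\infty)$, so the full operator $A_{\varnothing+} - \omega$ cannot be inverted there. What makes the argument close is precisely the restriction to $\mathcal{H}_{2+}$, which excises the $h_{1+}$-component causing the obstruction and lowers the threshold of the essential spectrum to $\Lambda$. On the Fourier side developed before the lemma, this manifests in the fact that the symmetry condition $\hat u(\xi,\cdot)\in h_{2+}$ is automatic and that the effective Rayleigh--Lamb symbol appearing in the determinant of $L(\xi,\omega)$ is nonzero for every real $\xi$ precisely when $\omega \notin [\Lambda,\infty)$. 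Correspondingly, at the operator level one must ensure that both the lifting $w$ and the source term $F$ respect the decomposition $\mathcal{H}_+ = \mathcal{H}_{1+} \oplus \mathcal{H}_{2+}$; the choice $w_1 \equiv 0$ above achieves this automatically, and this compatibility is the genuinely nontrivial ingredient of the construction.
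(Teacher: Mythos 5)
Your proof is correct, but it takes a genuinely different route from the paper's. The paper applies the Fourier transform in the horizontal variable, assembles the boundary-value operator pencil $\mathfrak A(\xi)$ (differential expression plus four boundary functionals), verifies that this pencil and its restriction to $h_{2+}$ are Fredholm of index zero, checks that the restricted kernel is trivial for $\omega\notin[\Lambda,\infty)$, and then invokes the pencil machinery of Kozlov--Maz'ya--Rossmann to conclude unique solvability with the $H^{1/2}\to H^1$ estimate. You instead work directly on the physical side: lift the inhomogeneous Dirichlet datum by a function $w\in H^1\cap\mathcal H_{2+}$ (with $w_1\equiv 0$, which automatically places $w$ in $\mathcal H_{2+}$), reduce to a homogeneous variational problem for $\tilde u := u - w\in D[a_{\varnothing+}^{(2)}]$ with bounded source $F\in D[a_{\varnothing+}^{(2)}]^*$, and use that $(A_{\varnothing+}^{(2)}-\omega)^{-1}$ extends to a bounded operator $D[a_{\varnothing+}^{(2)}]^*\to D[a_{\varnothing+}^{(2)}]$ whenever $\omega\notin\sigma(A_{\varnothing+}^{(2)})=[\Lambda,\infty)$; together with Korn's inequality (equivalence of the form norm and the $H^1$ norm on $D[a_{\varnothing+}^{(2)}]$) this yields existence, uniqueness, and the estimate. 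Your argument is more elementary and self-contained, but it stops at $H^1$-solvability; the paper deliberately sets up the pencil framework here because it is reused almost verbatim to obtain the full scale of mapping properties $K_\omega : H^s(\RR)\to H^{s+1/2}(\Omega_+;\CC^2)$ in Lemma \ref{lemma:mapping_prop_K_D}, which the Lax--Milgram route does not reach. One small step worth making explicit: testing only against $v\in D[a_{\varnothing+}^{(2)}]$ already yields the full variational identity \eqref{eq:Poisson_variational}, because for $u\in\mathcal H_{2+}$ and $v\in H^1\cap\mathcal H_{1+}$ (so $v_2\equiv 0$ and $v_1=v_1(x_1)$) the cross-term $\int_{\Omega_+}\scal{E(u)}{\Sigma(v)}_{2\times2}\,\mathrm dx = 2\int\partial_1 u_1\,\overline{\partial_1 v_1}\,\mathrm dx$ vanishes after integrating in $x_2$ by the mean-zero condition defining $\mathcal H_{2+}$, and $\scal{u}{v}_{\Omega_+}=0$ by orthogonality; your closing remark about respecting the decomposition gestures at this but does not carry out the computation.
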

\begin{proof} 
	Applying the Fourier transform  leads to the operator pencil
	$$ \mathfrak  A(\xi) = \begin{pmatrix} A(\xi) - \omega   \\ B_1 (\xi) \\ \vdots \\  B_4(\xi) \end{pmatrix} : H^2(I_+;\CC^2) 
	\to \begin{array}{c} L_2(I_+; \CC^2) \\ \oplus \\ \CC^4 \end{array} , $$
	where 
		$$ A(\xi) = \begin{pmatrix} 2 \xi^2 - \partial_2^2 & - \mathrm{i} \xi \partial_2 \\  - \mathrm{i} \xi \partial_2 &
	\xi^2 - 2 \partial_2^2 \end{pmatrix}  $$
	and 
	\begin{align*}
		B_1(\xi)u &= \partial_2 u_1 \left(\frac\pi2\right) + \mathrm{i} \xi u_2  \left(\frac\pi2\right) , & 
		B_2(\xi)u &= 2 \partial_2 u_2 \left(\frac\pi2\right) , \\
		B_3(\xi) u &= \partial_2 u_1 (0) + \mathrm{i}  \xi u_2 (0), &
		B_4(\xi) u &= u_2(0) . 
	\end{align*}
	It is well known that  for every $\xi \in \RR$ the operator $\mathfrak A(\xi)$ is a Fredholm operator with Fredholm index
	 $0$ as it corresponds to a self-adjoint problem
	on a bounded domain, cf.\ \cite[Chapter 4]{McLean} or \cite[Chapter 3]{KoMaRo}. We consider its restriction 
	$$ \mathfrak  A(\xi)|_{h_2+ } : H^2(I_+;\CC^2)  \cap h_{2+}
	\to h_{2+} \oplus \CC^4  . $$
	A short calculation shows that $\mathfrak  A(\xi)|_{h_2+ }$ is also Fredholm with  Fredholm index $0$. For $\omega \in \CC \backslash [\Lambda ,
	\infty)$ we have  $\ker (\mathfrak  A(\xi)|_{h_2+ }) = \{0\}$ for all $\xi \in \RR$, and thus, $\mathfrak  A(\xi)|_{h_2+ }$
 	is invertible for all $\xi \in \RR$. 
	Adapting slightly the proof of  \cite[Theorem 5.3.2]{KoMaRo} the invertibility of the restricted pencil shows that the Poisson problem is uniquely solvable for  $g \in H^{1/2}( \RR)$.
\end{proof}
Let $\omega \in \CC \backslash [\Lambda,\infty)$. We denote  by  $K_\omega : H^{1/2}(\RR) \to H^1(\Omega_+;\CC^2) \cap \mathcal H_{2+}$ the 
Poisson operator which maps the boundary value $g$ on the solution  $u$ of the Poisson problem \eqref{eq:solution_operator1}-\eqref{eq:solution_operator3}. 
\begin{lemma}\label{lemma:mapping_prop_K_D}
	Let $\omega \in \CC \backslash [\Lambda, \infty)$ and  $s \in \RR$. Then the operator $K_\omega$ 
	extends to a bounded linear mapping  $K_\omega : H^s(\RR) \to H^{s+1/2}(\Omega_+;\CC^2)$.  
\end{lemma}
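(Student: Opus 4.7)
The plan is to establish the mapping property by partial Fourier analysis in $x_1$, using the explicit representation of $K_\omega$ from the preceding construction. Writing $\widehat{K_\omega g}(\xi, x_2) = \hat g(\xi) \Phi(\xi, x_2)$, where $\Phi(\xi, \cdot) \in H^2(I_+; \CC^2) \cap h_{2+}$ is the unique solution of the restricted pencil equation $\mathfrak A(\xi)|_{h_{2+}} \Phi(\xi, \cdot) = (0, 0, 0, 1)^T$ provided by Lemma~\ref{lemma:solution_op_elast}, the claim reduces to quantitative $\xi$-symbol estimates on $\Phi$. Here $\langle \xi \rangle := (1 + |\xi|^2)^{1/2}$.

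Concretely, I would first prove the symbol bounds
$$\|\partial_2^k \Phi(\xi, \cdot)\|_{L^2(I_+; \CC^2)} \le C_k \langle \xi \rangle^{k - 1/2}, \qquad k \ge 0,$$
uniformly in $\xi \in \RR$ and for $\omega$ ranging over compact subsets of $\CC \setminus [\Lambda, \infty)$. For bounded $|\xi|$ these follow from the continuous dependence of the restricted inverse pencil on $\xi$ together with the invertibility in Lemma~\ref{lemma:solution_op_elast}. For large $|\xi|$, one exploits $\beta = \mathrm{i} |\xi| + O(|\xi|^{-1})$ and similarly for $\gamma$, so that the fundamental solutions $v^1, v^3$ decay like $e^{-|\xi| x_2}$ on $I_+$ while $v^2, v^4$ grow like $e^{+|\xi| x_2}$. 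Inverting $L(\xi, \omega)$ then shows that the coefficients of the growing modes are exponentially small (of order $e^{-\pi |\xi|/2}$) while those of the decaying modes are of order $\langle \xi \rangle^{-1}$; consequently $\Phi(\xi, x_2)$ is essentially an order-one exponential profile $e^{-|\xi| x_2}$, whose $L^2(I_+)$ norm scales like $\langle \xi \rangle^{-1/2}$ and whose $k$-th $x_2$-derivative picks up an additional factor $\langle \xi \rangle^k$.

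Combining these bounds with Parseval yields the conclusion for every integer $m = s + 1/2 \ge 1$: via the equivalent norm
$$\|u\|_{H^m(\Omega_+;\CC^2)}^2 \sim \sum_{j + k \le m} \int_\RR \langle \xi \rangle^{2j} \|\partial_2^k \hat u(\xi, \cdot)\|_{L^2(I_+; \CC^2)}^2 \, \mathrm{d} \xi ,$$
insertion of $\hat u = \hat g \, \Phi$ and the above symbol bounds give $\|K_\omega g\|_{H^m(\Omega_+;\CC^2)} \le C \|g\|_{H^{m-1/2}(\RR)}$. The assertion for arbitrary real $s \ge -1/2$ then follows by complex interpolation between two consecutive integer values of $m$, and for $s < -1/2$ by a duality argument combining the Green formula for the elasticity system with the regularity properties of an auxiliary mixed Dirichlet-Neumann problem on $\Omega_+$.

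The main obstacle will be the uniform control of $L(\xi, \omega)^{-1}$ in $\xi$. Lemma~\ref{lemma:solution_op_elast} asserts only pointwise invertibility; what is required in addition is that the Rayleigh-Lamb-type determinant restricted to $h_{2+}$, after factoring out its natural polynomial growth in $\xi$, remains bounded away from zero uniformly as $|\xi| \to \infty$. This is obtained by expanding $L(\xi, \omega)$ in the decaying-mode basis $v^1, v^3$ and the growing-mode basis $v^2, v^4$, which reveals a block-triangular leading structure whose diagonal $2 \times 2$ blocks have explicitly computable non-vanishing determinants; the explicit inverse $L(\xi, 0)^{-1}$ displayed in the preceding calculation furnishes a template for this expansion. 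Once this quantitative invertibility is secured, the symbol estimates, and hence the lemma, follow routinely.
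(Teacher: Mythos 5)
Your overall strategy — partial Fourier transform in $x_1$, $\xi$-uniform symbol estimates on $\Phi(\xi,\cdot)$, Parseval, and interpolation/duality to cover non-integer and negative $s$ — is the right one, and it agrees in spirit with the paper, whose proof is a one-line appeal to a parameter-dependent elliptic estimate from Kozlov--Maz'ya--Rossmann restricted to $\mathcal H_{2+}$. You are in effect re-deriving the content of that cited theorem in the situation at hand, which is a legitimate plan.

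There is, however, a genuine error in the central quantitative step. You claim that the coefficients of the decaying modes $v^1, v^3$ are of order $\langle\xi\rangle^{-1}$; this is wrong for $\omega\neq 0$. Inverting the leading $2\times 2$ block (rows $B_3,B_4$, columns $v^1,v^3$) against the data $(0,1)^T$ yields $a_1 = -2\xi/\omega$ and $a_3 = (\omega-2\xi^2)/(\gamma\omega)$, both of order $|\xi|$, not $|\xi|^{-1}$. The cause is that $v^1=(\beta,-\xi)^T e^{\mathrm{i}\beta x_2}$ and $v^3=(\xi,\gamma)^T e^{\mathrm{i}\gamma x_2}$ become nearly collinear as $|\xi|\to\infty$: their polarisation vectors have entries of size $|\xi|$ but $2\times 2$ determinant $\beta\gamma+\xi^2\to 3\omega/4$, and $\beta-\gamma=\mathcal O(|\xi|^{-1})$. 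Taken individually, $a_1 v^1$ has $L^2(I_+)$-norm of order $|\xi|^{3/2}$; the desired bound $\|\Phi(\xi,\cdot)\|_{L^2(I_+)}\lesssim |\xi|^{-1/2}$ holds only because of a cancellation between $a_1v^1$ and $a_3v^3$ that your ``block-triangular leading structure'' does not detect. Explicitly, $\Phi_2(\xi,x_2)=e^{\mathrm{i}\gamma x_2}+\frac{2\xi^2}{\omega}\bigl(e^{\mathrm{i}\beta x_2}-e^{\mathrm{i}\gamma x_2}\bigr)$, and $|e^{\mathrm{i}\beta x_2}-e^{\mathrm{i}\gamma x_2}|\lesssim |\xi|^{-1}x_2\,e^{-c|\xi|x_2}$, which is where the two lost powers of $|\xi|$ are recovered. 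Note also that the block determinant $\mathrm{i}\gamma\omega$ is nonvanishing and grows, so the obstacle you flag (the Rayleigh--Lamb determinant degenerating) is not the actual difficulty; the difficulty is the compensated blow-up of the coefficients in the near-degenerate basis. To rescue the plan you must either pass to a basis that stays uniformly non-degenerate as $|\xi|\to\infty$ (in the spirit of the Jordan basis used in the paper at $\omega=0$) or isolate the cancellation explicitly as above before the symbol bound $\|\partial_2^k\Phi(\xi,\cdot)\|_{L^2(I_+)}\lesssim\langle\xi\rangle^{k-1/2}$ can be regarded as proved.
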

The proof follows easily by using a variant of  \cite[Theorem 5.3.2]{KoMaRo} and   restricting  ourselves to functions $u$  which are contained in $\mathcal H_{2+}$. 
The  Dirichlet-to-Neumann operator 
$D_\omega : H^{1/2}(\RR) \to H^{-1/2}(\RR)$ is defined by 
$$  \widehat{D_\omega g} (\xi) := \hat g (\xi) \cdot m_\omega(\xi) .$$
It satisfies 
$$ D_\omega : H^{s}(\RR) \to H^{s-1} (\RR)  \qquad \text{for any } s \in \RR , $$
cf.\ also \cite[Theorem 5.3.2]{KoMaRo}. 
The following lemmas give a variational characterisation of the Poisson and  the Dirichlet-to-Neumann operators. Their proof is based on a simple integration by parts argument after applying the Fourier transform
in the horizontal direction. 
\begin{lemma}\label{lemma:var_K_omega_elast}
	Let $\omega \in \CC \backslash  [\Lambda, \infty)$ and $g \in H^{1/2}(\RR)$. For $u \in H^1(\Omega_+;\CC^2) \cap \mathcal H_{2+}$ the following conditions
	are equivalent:
	\begin{enumerate}
		\item $K_\omega g = u$. 
		\item The function $u$ satisfies $u_2|_{\RR \times \{0\}} = g$ and  for all $v \in H^1(\Omega_+; \CC^2) \cap \mathcal H_{2+} $ with 
		$v_2|_{\RR \times \{0\}} = 0$ we have 
		\begin{equation}  \int_{\Omega_+} \scal{E (u)}{\Sigma(v)}_{2 \times 2}  \; \mathrm{d} x = \omega \scal{u}{v}_{\Omega_+} . \end{equation}
\end{enumerate}
\end{lemma}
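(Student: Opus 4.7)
The plan is to handle both implications by a standard integration-by-parts (Green-formula) argument, with Lemma~\ref{lemma:solution_op_elast} closing the reverse direction. No new analytical input beyond what the paper has already developed is required.

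For (1)$\Rightarrow$(2), assume $u = K_\omega g$. Then $u_2|_{\RR\times\{0\}}=g$ is built into the construction, and the Fourier analysis underlying Lemma~\ref{lemma:solution_op_elast} shows $\hat u(\xi,\cdot)\in C^\infty([0,\pi/2];\CC^2)$ for almost every $\xi$, so the boundary relations \eqref{eq:solution_operator1}--\eqref{eq:solution_operator3} hold classically on each vertical slice. For any admissible $v$, I expand $\scal{E(u)}{\Sigma(v)}_{2\times 2}$ componentwise and integrate by parts in $x_2$ on each slice $\{\xi\}\times I_+$; combining with Plancherel yields
\begin{equation*}
\int_{\Omega_+}\scal{E(u)}{\Sigma(v)}_{2\times 2}\;\mathrm{d} x = \scal{(-\Delta-\mathrm{grad}\,\mathrm{div})u}{v}_{\Omega_+}+B_{\pi/2}+B_0,
\end{equation*}
where $B_{\pi/2}$ and $B_0$ denote the boundary contributions on $\RR\times\{\pi/2\}$ and $\RR\times\{0\}$. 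The bulk term equals $\omega\scal{u}{v}_{\Omega_+}$ by \eqref{eq:solution_operator1}; the term $B_{\pi/2}$ vanishes because both components of the conormal derivative vanish by \eqref{eq:solution_operator2}; finally $B_0=\int_\RR[(\partial_2 u_1+\partial_1 u_2)\overline{v_1}+2\partial_2 u_2\,\overline{v_2}]\;\mathrm{d} x_1$, whose first summand vanishes by \eqref{eq:solution_operator3} and whose second summand vanishes by the hypothesis $v_2|_{\RR\times\{0\}}=0$.

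For (2)$\Rightarrow$(1), put $w:=u-K_\omega g$. Then $w\in H^1(\Omega_+;\CC^2)\cap\mathcal H_{2+}$, $w_2|_{\RR\times\{0\}}=0$, and linearity combined with the forward direction applied to $K_\omega g$ shows that $w$ satisfies the homogeneous identity $\int_{\Omega_+}\scal{E(w)}{\Sigma(v)}_{2\times 2}\;\mathrm{d} x=\omega\scal{w}{v}_{\Omega_+}$ for every admissible $v$. Testing first against arbitrary $v\in C_c^\infty(\Omega_+;\CC^2)\cap\mathcal H_{2+}$ recovers \eqref{eq:solution_operator1} in the distributional sense; elliptic regularity for the Lam\'e system then makes $w$ smooth up to the boundary, and permitting $v$ with arbitrary traces on $\RR\times\{\pi/2\}$ and with arbitrary first component on $\RR\times\{0\}$ extracts \eqref{eq:solution_operator2} and the traction line of \eqref{eq:solution_operator3}. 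Thus $w$ solves the Poisson problem with data $g=0$, and the uniqueness half of Lemma~\ref{lemma:solution_op_elast} forces $w=0$, i.e., $u=K_\omega g$. The only delicate point, and therefore the main obstacle, is checking that sufficiently many admissible test functions live inside $\mathcal H_{2+}$ to separate the PDE from each boundary condition; this is immediate because the defining constraint of $\mathcal H_{2+}$ is a single linear relation on $v_1$, which leaves ample room both for localisation inside $\Omega_+$ and for prescribing arbitrary traces on the upper edge.
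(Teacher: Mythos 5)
Your strategy is sound in outline and the forward direction is fine, but the reverse direction has a genuine gap precisely at the step you flag as "the only delicate point." Testing the homogeneous identity for $w=u-K_\omega g$ against $v\in C_c^\infty(\Omega_+;\CC^2)\cap\mathcal H_{2+}$ does \emph{not} recover \eqref{eq:solution_operator1} in full; it only tells you that the distribution $f:=(-\Delta-\mathrm{grad}\,\mathrm{div})w-\omega w$ annihilates $C_c^\infty\cap\mathcal H_{2+}$. Since $\mathcal H_{2+}$ imposes the linear constraint $\int_0^{\pi/2}v_1\,\mathrm{d}x_2=0$ on the first component, this recovers $f_2=0$ but only forces $f_1$ to be independent of $x_2$, not to vanish. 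With the PDE available only in this projected form, the subsequent appeal to elliptic regularity "up to the boundary" and the extraction of \eqref{eq:solution_operator2}--\eqref{eq:solution_operator3} are not yet justified, so the chain of reasoning does not close as written.

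The cleaner and in fact shorter route avoids this entirely. Recall that $\mathcal H_{1+}\cap H^1(\Omega_+;\CC^2)$ consists of $v$ with $v_2\equiv 0$ and $v_1(x_1,x_2)=f(x_1)$, so every such $v$ automatically satisfies $v_2|_{\RR\times\{0\}}=0$. For $u\in\mathcal H_{2+}\cap H^1$ and such $v$, one computes $\scal{E(u)}{\Sigma(v)}_{2\times 2}=2\,\partial_1 u_1\cdot\overline{f'(x_1)}$, whose $x_2$-integral over $I_+$ vanishes because $\int_0^{\pi/2}u_1\,\mathrm{d}x_2=0$; likewise $\scal{u}{v}_{\Omega_+}=0$. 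Thus both sides of the variational identity vanish identically for $v\in\mathcal H_{1+}\cap H^1$, so condition (2) already implies \eqref{eq:Poisson_variational} for \emph{all} $v\in H^1(\Omega_+;\CC^2)$ with $v_2|_{\RR\times\{0\}}=0$. This is precisely the defining variational formulation of \eqref{eq:solution_operator1}--\eqref{eq:solution_operator3}, so the uniqueness half of Lemma~\ref{lemma:solution_op_elast} forces $u=K_\omega g$. No elliptic regularity or separate extraction of boundary conditions is needed — the whole point of the paper's variational setup is that these are encoded in \eqref{eq:Poisson_variational}. This is what the phrase "simple integration by parts argument after applying the Fourier transform" is gesturing at: integration by parts gives the forward direction, and the orthogonality of the decomposition $\mathcal H_{1+}\oplus\mathcal H_{2+}$ plus Lemma~\ref{lemma:solution_op_elast} gives the reverse.
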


\begin{lemma}\label{lemma:var_D_omega_elast}
	Let $\omega \in \CC \backslash [\Lambda, \infty)$ and   $g \in H^{1/2}(\RR)$,  $u = K_\omega g \in H^1(\Omega_+;\CC^2) \cap \mathcal H_{2+}$.
 	Then for $h \in H^{-1/2}(\RR)$ the following two assertions are equivalent:
	\begin{enumerate}
 		\item $D_\omega g = h$.
 		\item For all $v \in H^1(\Omega_+; \CC^2) \cap \mathcal H_{2+} $  we have 
  		\begin{equation} \int_{\Omega_+} \scal{E (u)}{\Sigma(v)}_{2 \times 2}  \; \mathrm{d} x = \omega \scal{u}{v}_{\Omega} + \scal{h}{v_2(\cdot,0)}_{\RR} .\end{equation}
	\end{enumerate}
\end{lemma}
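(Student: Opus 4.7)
The plan is to derive the variational identity by applying Green's identity for the elasticity operator to the pair $(u,v)$ and then identify the boundary contribution on $\RR\times\{0\}$ with the Dirichlet-to-Neumann symbol $m_\omega$. Since $u = K_\omega g$ already satisfies all the natural boundary conditions on $\RR\times\{\pi/2\}$ and the first traction component on $\RR\times\{0\}$, only the second traction component on the crack line will survive the integration by parts, and this is precisely $D_\omega g$.

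For the implication (1)$\Rightarrow$(2) I would start with smooth $g \in \mathcal S(\RR)$, so that the explicit Fourier representation of $u = K_\omega g$ yields $u(\cdot,x_2) \in \mathcal S(\RR;\CC^2)$ uniformly in $x_2 \in [0,\pi/2]$; this makes all boundary integrals absolutely convergent. Integration by parts gives
\begin{equation*}
\int_{\Omega_+}\scal{E(u)}{\Sigma(v)}_{2\times 2}\,\mathrm d x
= \int_{\Omega_+}\scal{(-\Delta-\mathrm{grad}\,\mathrm{div})u}{v}\,\mathrm d x - \int_{\partial\Omega_+}\scal{\Sigma(u)\cdot{\bf n}}{v}\,\mathrm d S .
\end{equation*}
The bulk term equals $\omega\scal{u}{v}_{\Omega_+}$ by \eqref{eq:solution_operator1}. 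On the top boundary, the traction $\Sigma(u)\cdot{\bf n}$ vanishes by \eqref{eq:solution_operator2}. On the bottom, with ${\bf n} = -e_2$, the first entry of $\Sigma(u)\cdot{\bf n}$ is $-(\partial_2u_1+\partial_1u_2)$, which vanishes by \eqref{eq:solution_operator3}, while the second entry is $-2\partial_2 u_2(\cdot,0)$. The explicit computation preceding the lemma identifies this function in Fourier space as $m_\omega(\xi)\hat g(\xi) = \widehat{D_\omega g}(\xi)$, so the remaining boundary term is exactly the duality pairing $\scal{D_\omega g}{v_2(\cdot,0)}_\RR$. Extension to general $g \in H^{1/2}(\RR)$ follows by density, using Lemma \ref{lemma:mapping_prop_K_D} together with the continuity of $D_\omega : H^{1/2}(\RR) \to H^{-1/2}(\RR)$ and the $H^{1/2}$-continuity of the trace $v\mapsto v_2(\cdot,0)$.

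For the converse (2)$\Rightarrow$(1), writing (2) for the given $h$ and for $h = D_\omega g$ (the latter already established) and subtracting yields $\scal{h - D_\omega g}{v_2(\cdot,0)}_\RR = 0$ for every $v \in H^1(\Omega_+;\CC^2)\cap \mathcal H_{2+}$. To conclude $h = D_\omega g$ it suffices that the trace map $v\mapsto v_2(\cdot,0)$ restricted to this subspace surjects onto $H^{1/2}(\RR)$. This surjectivity is immediate from Lemma \ref{lemma:solution_op_elast}: for any $\varphi \in H^{1/2}(\RR)$ the function $K_0\varphi$ (or $K_{\omega_0}\varphi$ for a convenient $\omega_0$) is an admissible lift lying in $\mathcal H_{2+}$ with trace equal to $\varphi$.

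The main obstacle is the rigorous handling of the integration by parts, because generically $v_2(\cdot,0)$ only lies in $H^{1/2}(\RR)$ and the normal traction $-2\partial_2 u_2(\cdot,0)$ only in $H^{-1/2}(\RR)$, so the boundary integral must be interpreted as an $H^{-1/2}$--$H^{1/2}$ duality pairing rather than an absolutely convergent integral. The density argument above resolves this, but care is needed to ensure that the approximating smooth data remain in $\mathcal H_{2+}$ and that the explicit Fourier-side computation of $m_\omega$ is valid uniformly away from the spectral threshold $\omega \in \CC\setminus[\Lambda,\infty)$, which is exactly where Lemma \ref{lemma:solution_op_elast} guarantees uniform invertibility of the restricted operator pencil.
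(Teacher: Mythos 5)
Your proof takes essentially the same route as the paper's, which states only that the result ``is based on a simple integration by parts argument after applying the Fourier transform in the horizontal direction''; carrying out Green's identity in $\Omega_+$ for smooth data, identifying the surviving bottom-boundary traction via the Fourier multiplier $m_\omega$, and then extending by density using Lemma~\ref{lemma:mapping_prop_K_D} is a valid variant of the same idea, and your surjectivity argument for the converse is correct. One small slip: Green's identity for the elasticity form carries a plus sign on the boundary term,
\begin{equation*}
\int_{\Omega_+}\scal{E(u)}{\Sigma(v)}_{2\times2}\,\mathrm{d}x = \int_{\Omega_+}\scal{(-\Delta-\mathrm{grad}\,\mathrm{div})u}{v}\,\mathrm{d}x + \int_{\partial\Omega_+}\scal{\Sigma(u)\cdot{\bf n}}{v}\,\mathrm{d}S,
\end{equation*}
whereas with the minus sign you wrote the bottom contribution would come out as $-\scal{D_\omega g}{v_2(\cdot,0)}_{\RR}$; since you nonetheless state the correct final identity, this is evidently just a typo in the displayed formula.
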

Next we prove  a perturbation formula for the Dirichlet-to-Neumann operator with respect to the spectral parameter
$\omega$, which will be essential for the proof of Theorem \ref{th:main_2D}. 
\begin{theorem}\label{th:perturb-to-N}
	Let $\omega,  \eta \in \CC \backslash [\Lambda, \infty)$. Then the following identities hold true:
	\begin{enumerate}
		\item $K_\omega = ( I + (\omega - \eta) ( A_{\varnothing +}^{(2)}  - \omega)^{-1}) K_\eta$;
		\item $D_\omega^* = D_{\overline{\omega}} $;
		\item $D_\omega = D_{\overline{\eta}} - ( \omega - \overline{\eta})   K^*_\eta K_\omega$;
		\item $D_\omega =  D_{\overline{\eta}} -   ( \omega - \overline{\eta})  K^*_\eta \Bigr( I + (\omega - \eta) 
		( A_{\varnothing +}^{(2)} -  \omega)^{-1}  \Bigr) K_\eta $.
	\end{enumerate}
	We note that  $K_\omega \in \mathcal L( H^{-1/2}(\RR) ,  L_2(\Omega_+;\CC^2))$, and thus, its adjoint satisfies 
$K_\omega^* \in \mathcal L( L_2(\Omega_+;\CC^2)  , H^{ 1/2}(\RR) )$.
\end{theorem}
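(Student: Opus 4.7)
My plan is to treat (1) as a resolvent-type comparison of two boundary value problems and then to derive (2)--(4) from a single Green's identity obtained by testing the variational characterisation of $D_\omega$ from Lemma \ref{lemma:var_D_omega_elast} against itself. For (1), I would fix $g \in H^{1/2}(\RR)$ and set $u_\omega := K_\omega g$, $u_\eta := K_\eta g$. By Lemma \ref{lemma:solution_op_elast} both functions lie in $H^1(\Omega_+;\CC^2) \cap \mathcal H_{2+}$ and share the Dirichlet trace $u_2(\cdot, 0) = g$, so the difference $w := u_\omega - u_\eta$ has vanishing Dirichlet trace and inherits the homogeneous traction boundary conditions on $\partial\Omega_+$; hence $w \in D(A^{(2)}_{\varnothing +})$. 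Subtracting the two interior PDEs gives $(A^{(2)}_{\varnothing +} - \omega) w = (\omega - \eta) u_\eta$, and since $\omega \notin [\Lambda, \infty) = \sigma(A^{(2)}_{\varnothing +})$, inversion yields $w = (\omega - \eta) (A^{(2)}_{\varnothing +} - \omega)^{-1} u_\eta$, which rearranges to (1).

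For (2)--(3), the idea is to apply Lemma \ref{lemma:var_D_omega_elast} twice: once with $u = K_\omega g$ testing against $v = K_\eta g'$, and once with $(\omega, g)$ and $(\eta, g')$ swapped. Since $\lambda = 0$ and $\mu = 1$, one has $\Sigma(v) = 2 E(v)$, so the elasticity form $\int_{\Omega_+} \langle E(u), \Sigma(v)\rangle_{2\times 2} \; \mathrm d x$ is Hermitian symmetric in $(u, v)$. Subtracting the complex conjugate of the swapped identity from the first eliminates the elasticity form and yields the master identity
\[ D_\omega = D_\eta^* + (\overline\eta - \omega) K_\eta^* K_\omega . \]
Specialising to $\eta = \overline\omega$ makes the second summand vanish and produces $D_\omega = D_{\overline\omega}^*$, which is (2) upon taking adjoints. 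Substituting $D_\eta^* = D_{\overline\eta}$ back into the master identity then yields (3), and (4) follows by plugging (1) into the factor $K_\omega$ in (3).

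The main obstacle is the functional-analytic bookkeeping rather than any algebraic difficulty. One must verify that $w = u_\omega - u_\eta$ truly lies in the operator domain $D(A^{(2)}_{\varnothing +})$, which requires combining the improved regularity of Lemma \ref{lemma:mapping_prop_K_D} (so that traces cancel in a classical sense and not merely distributionally) with the $\mathcal H_{2+}$-symmetry preserved by $K_\omega$ and $K_\eta$. Moreover, one has to identify the pairing $\langle \cdot, \cdot\rangle_\RR$ appearing in Lemma \ref{lemma:var_D_omega_elast} with the $H^{1/2}$--$H^{-1/2}$ duality that defines $K_\eta^*$, so that the composition $K_\eta^* K_\omega$ is a well-defined bounded operator $H^{-1/2}(\RR) \to H^{1/2}(\RR)$ and identities (3)--(4) hold as operator identities in $\mathcal L(H^{1/2}(\RR), H^{-1/2}(\RR))$.
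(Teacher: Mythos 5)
Your approach to parts (2)--(4) is essentially identical to the paper's: test Lemma \ref{lemma:var_D_omega_elast} with $u = K_\omega g$, $v = K_\eta h$, swap the roles, conjugate, and subtract using the Hermitian symmetry of $\int_{\Omega_+}\langle E(\cdot),\Sigma(\cdot)\rangle$; this yields the identity $D_\omega = D_\eta^* + (\overline\eta - \omega) K_\eta^* K_\omega$, from which (2), (3), (4) follow exactly as you describe (and the sign in your master identity is the correct one). Part (1) is where you diverge slightly, and the divergence creates a small gap. The paper runs the argument \emph{forward}: it defines the candidate $u := K_\eta g + (\omega-\eta)(A^{(2)}_{\varnothing+}-\omega)^{-1}K_\eta g$, notes it has the correct Dirichlet trace, and verifies the variational characterisation of Lemma \ref{lemma:var_K_omega_elast} by a direct computation; uniqueness of the Poisson solution then gives $u = K_\omega g$. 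You instead run it \emph{backward}: you form $w := K_\omega g - K_\eta g$ and claim $w \in D(A^{(2)}_{\varnothing+})$ so that the resolvent may be applied. This is the step that needs care, and the fix you propose does not quite work: Lemma \ref{lemma:mapping_prop_K_D} gives only $K_\omega g \in H^1(\Omega_+;\CC^2)$ for $g\in H^{1/2}(\RR)$, not the $H^2$-regularity that membership in the operator domain would seem to demand. The correct justification is instead Kato's first representation theorem: $w$ lies in the form domain of $a^{(2)}_{\varnothing+}$ (it is $H^1$, lies in $\mathcal H_{2+}$, and has vanishing Dirichlet trace), and combining Lemma \ref{lemma:var_K_omega_elast} for $u_\omega$ and $u_\eta$ gives $a^{(2)}_{\varnothing+}[w,v] = \langle \omega w + (\omega-\eta)K_\eta g, v\rangle_{\Omega_+}$ for all $v$ in the form domain; this exhibits $w \in D(A^{(2)}_{\varnothing+})$ with $(A^{(2)}_{\varnothing+}-\omega)w = (\omega-\eta)K_\eta g$. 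With that replacement your derivation of (1) closes, and it is a legitimate (if slightly longer) alternative to the paper's direct verification of the candidate. Both proofs buy the same thing; the paper's direction avoids the domain-membership step entirely, while yours is perhaps more transparent as a ``resolvent identity for Poisson operators.''
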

A more general assertion is given in  \cite[Proposition 2.6]{BehLanger07} in the context of boundary triplets.
\begin{proof}
	The proof is based on variational arguments. We note that  (4) follows by combining assertions (1) and (3).
	Let $g \in H^{1/2}(\RR)$ and $$u := K_\eta g + (\omega - \eta) ( A_{\varnothing+}^{(2)}  - \omega)^{-1}) K_\eta g.$$
	Then $u(x,0) = g$ and for $v \in H^1(\Omega_+;\CC^2) \cap \mathcal H_{2+}$, $v_2|_{\RR \times \{0\}}  = 0$ we have 
	\begin{align*}
		\int_{\Omega_+} \scal{E(u)}{\Sigma(v) }_{2 \times 2}  \; \mathrm{d} x 
		&= \eta \scal{K_\eta g}{v}_{\Omega_+} +   (\omega - \eta) \cdot a_{0+} [( A_{\varnothing +}^{(2)} - \omega)^{-1} K_\eta g, v]\\
		&= \eta \scal{K_\eta g}{v}_{\Omega_+} + (\omega - \eta) \cdot \scal{A_{\varnothing+}^{(2)} ( A_{\varnothing+}^{(2)} - \omega)^{-1} K_\eta g}{v}_{\Omega_+} \\[4pt]
		&= \omega\scal{K_\eta g}{v}_{\Omega_+}
		+ \omega (\omega - \eta) \scal{(A_{\varnothing+}^{(2)} - \omega)^{-1} K_\eta g}{v}_{\Omega_+} \\[4pt]
		&= \omega \scal{u}{v}_{\Omega_+} . 
	\end{align*}
	Now Lemma \ref{lemma:var_D_omega_elast} implies the first assertion. Next we prove assertions (2) and (3). For $g , h \in H^{1/2}(\RR)$ and 
	$u := K_\omega g$, $v := K_\eta h$ we have 
	$$ \scal{D_\omega g}{h}_\RR - \scal{g}{D_\eta h }_\RR
		= (\omega - \overline{\eta}) \scal{u}{v}_{\Omega_+} = (\omega - \overline{\eta}) \scal{g }{K_\omega^* K_\eta v}_{\Omega_+} . $$
	This proves (2) and (3). 
\end{proof}
Now we  return to the mixed problem and  introduce the  truncated operator acting only on functions, which are supported on $\overline{\Gamma}$. 
We define
\begin{align}\label{def:Hs_0}
	\tilde H_0^{1/2}(\Gamma)  &:= \{ g \in H^{1/2}(\RR) :  \mathrm{supp}(g) \subseteq   \overline{\Gamma}  \} , \\
	H^{- 1/2} (\Gamma) &:=  \{ g \in (C_c^\infty(\Gamma))' :  \exists G \in H^{-1/2}(\RR) \text{ such that }  g = G|_{\Gamma} \} .
	\label{def:Hs} 
\end{align}
Here $C_c^\infty(\Gamma)$ is the space of  smooth functions with compact support contained in  $\Gamma $; its dual   $(C_c^\infty(\Gamma))'$ is the space of distributions on $\Gamma$. 
We note that $\tilde H_0^{1/2}(\Gamma)$ is a closed subspace of distributions on $\RR$ whereas
$H^{-1/2}(\Gamma)$ is a subspace  of distributions on  $\Gamma$. The latter
space may be identified with the quotient space
$$ \raisebox{0.5ex}{\ensuremath{H^{-1/2}(\RR)}}
\ensuremath{\mkern-0.3mu}/\ensuremath{\mkern-0.3mu}
\raisebox{-0.5ex}{\ensuremath{\tilde H^{-1/2}_0(\RR \backslash \overline{\Gamma})}} , $$
where $\tilde H^{-1/2}_0(\RR \backslash \overline{\Gamma})$ contains, by definition, those distributions 
in $H^{-1/2}(\RR)$ which have support in $\RR \backslash \Gamma$. We endow the spaces in \eqref{def:Hs_0} and 
\eqref{def:Hs} with their natural topology, i.e., $\tilde H^{1/2}_0(\Gamma)$ carries  the subspace topology of $H^{1/2}(\RR)$ and 
$H^{-1/2}(\Gamma)$ has the quotient topology. Note that  we may identify $\tilde H^{1/2}_0(\Gamma)$ 
with the subspace of $L_2(\Gamma)$ which consists of 
those functions whose extension by $0$ yields an element of $H^{1/2}(\RR)$. Furthermore, the 
space  $\tilde H^{1/2}_0(\Gamma)$ is an isometric realisation of the (anti-)dual of
$H^{-1/2}(\Gamma)$ and vice-versa. The dual pairing is given by the expression
\begin{equation}\label{eq:dual_pairing}
	\scal{g}{h}_{\Gamma} := \scal{G}{h}_{\RR}  ,\qquad g \in H^{-1/2}(\Gamma) , \; h \in \tilde H^{1/2}_0(\Gamma) ,
\end{equation}
where $G \in H^{-1/2}(\RR)$ denotes any extension of $g$, cf.\ \cite[Theorem 3.14]{McLean}. In particular \eqref{eq:dual_pairing} is independent of the chosen extensions $G$ which
is due to the fact, that  $C_c^\infty(\Gamma)$ is a dense subset of $\tilde H^{1/2}_0(\Gamma)$, cf.\ \cite[Theorem 3.29]{McLean}. 
Thus, the  domain of the quadratic form $a_{\Gamma+}$ may 
be rewritten  as  follows
\begin{equation}
    D[a_{\Gamma+}] =  \left\{ u \in H^{1}(\Omega_+;\CC^2 ) :   u_2|_{\RR \times \{0\} } \in \tilde H^{1/2}_{0}(\Gamma) \right \} .
\end{equation}
Then we define  the truncated Dirichlet-to-Neumann operator 
\begin{equation}
	D_{\Gamma,\omega} : \tilde H^{1/2}_{0}(\Gamma) \to H^{-1/2} (\Gamma), \qquad  
	D_{\Gamma,\omega} := r_\Gamma D_\omega e_\Gamma  , \end{equation}
where $ r_\Gamma : H^{-1/2}(\RR) \to H^{-1/2}(\Gamma)$ is the restriction operator and $e_\Gamma : \tilde H^{1/2}_{0}(\Gamma) \to H^{1/2}(\RR)$
is the embedding. Identifying $ \tilde H^{1/2}_{0}(\Gamma) $
with a subspace of $L_2(\Gamma)$, the operator $e_\Gamma$ is simply extension by $0$. 
\begin{theorem}
 	Let $\omega \in \CC \backslash [\Lambda , \infty)$. Then
	\begin{equation} \mathrm{dim}\; \mathrm{ker} (A_{\Gamma}^{(2)} - \omega) = \mathrm{dim}\; \mathrm{ker}  ( D_{\Gamma, \omega } ) . \end{equation}
\end{theorem}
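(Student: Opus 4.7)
The plan is to exhibit an explicit linear bijection between $\ker(A_\Gamma^{(2)}-\omega)$ and $\ker(D_{\Gamma,\omega})$, with arrows given by the Dirichlet trace $u\mapsto u_2|_{\RR\times\{0\}}$ and the Poisson operator $g\mapsto K_\omega g$. Using the symmetry reduction recalled at the start of the section, I would replace $A_\Gamma^{(2)}$ by the unitarily equivalent half-strip operator $A_{\Gamma+}^{(2)}$ and work entirely on $\Omega_+$.

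First I would take $u\in\ker(A_{\Gamma+}^{(2)}-\omega)$. The mixed boundary condition in $D(A_{\Gamma+})$ forces the trace $g:=u_2|_{\RR\times\{0\}}$ to vanish on $(\RR\times\{0\})\setminus\overline{\Gamma}$, so $g\in\tilde H^{1/2}_0(\Gamma)$. Since $\omega\notin[\Lambda,\infty)$, Lemma \ref{lemma:solution_op_elast} combined with Lemma \ref{lemma:var_K_omega_elast} identifies $u$ with $K_\omega g$. Plugging an arbitrary test function $v\in D[a_{\Gamma+}^{(2)}]$ into Lemma \ref{lemma:var_D_omega_elast} and comparing with the eigenvalue equation $\int_{\Omega_+}\scal{E(u)}{\Sigma(v)}_{2\times 2}\,\mathrm{d}x=\omega\scal{u}{v}_{\Omega_+}$ yields $\scal{D_\omega g}{v_2(\cdot,0)}_\RR=0$ for every $v_2(\cdot,0)\in\tilde H^{1/2}_0(\Gamma)$, which by the dual pairing \eqref{eq:dual_pairing} is exactly $D_{\Gamma,\omega}g=0$.

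Conversely, given $g\in\ker(D_{\Gamma,\omega})$, I would set $u:=K_\omega g$. Its trace $u_2|_{\RR\times\{0\}}=g$ is supported in $\overline{\Gamma}$, hence $u\in D[a_{\Gamma+}^{(2)}]$. Applying Lemma \ref{lemma:var_D_omega_elast} to an arbitrary $v\in D[a_{\Gamma+}^{(2)}]$ and using \eqref{eq:dual_pairing} to rewrite $\scal{D_\omega g}{v_2(\cdot,0)}_\RR$ as $\scal{D_{\Gamma,\omega}g}{v_2(\cdot,0)}_\Gamma=0$ produces
\[
\int_{\Omega_+}\scal{E(u)}{\Sigma(v)}_{2\times 2}\,\mathrm{d}x=\omega\scal{u}{v}_{\Omega_+},
\]
so $u\in D(A_{\Gamma+}^{(2)})$ and $A_{\Gamma+}^{(2)}u=\omega u$.

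Finally, the two constructions are mutually inverse: $(K_\omega g)_2|_{\RR\times\{0\}}=g$ by definition of $K_\omega$, and $K_\omega(u_2|_{\RR\times\{0\}})=u$ by the identification established in the forward step. Hence the trace map is a linear isomorphism between the two kernels and the dimension equality follows. The only delicate point, rather than a genuine obstacle, is the consistent use of the duality \eqref{eq:dual_pairing}: whenever the full operator $D_\omega$ is tested against some $v_2(\cdot,0)\in\tilde H^{1/2}_0(\Gamma)$, the pairing sees only the $\Gamma$-restriction of $D_\omega g$, namely $D_{\Gamma,\omega}g$. This single observation bridges the full and truncated Dirichlet-to-Neumann operators in both directions and turns the argument into routine bookkeeping on top of Lemmas \ref{lemma:solution_op_elast}--\ref{lemma:var_D_omega_elast}.
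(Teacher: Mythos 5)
Your argument is correct and follows exactly the route the paper indicates in the remark after the theorem: reduce to the half-strip operator $A_{\Gamma+}^{(2)}$, combine the variational characterisations of $K_\omega$ and $D_\omega$ (Lemmas \ref{lemma:var_K_omega_elast} and \ref{lemma:var_D_omega_elast}) with the duality \eqref{eq:dual_pairing} of $\tilde H^{1/2}_0(\Gamma)$ and $H^{-1/2}(\Gamma)$, and observe that the trace map and $K_\omega$ are mutually inverse on the respective kernels. The paper leaves the details to the reader; your write-up is a faithful and complete elaboration of that sketch, including the needed observation that $v_2(\cdot,0)$ ranges over all of $\tilde H^{1/2}_0(\Gamma)$ as $v$ varies over $D[a_{\Gamma+}^{(2)}]$.
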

The proof is an easy consequence of the variational characterisation of the Dirichlet-to-Neumann operator in Lemma \ref{lemma:var_D_omega_elast} and the duality of $\tilde H^{1/2}_{0}(\Gamma) $ and $H^{-1/2} (\Gamma)$. 
We note that 
$$ D_{\Gamma, \omega} : \tilde H^{1/2}_{0}(\Gamma) \to \tilde H^{1/2}_{0}(\Gamma)^*  ,   $$
and thus, the operator $D_{\Gamma, \omega}$ may be completely described  by its  sesquilinear form
\begin{align}\label{def:d_ell}
	d_{\Gamma, \omega} [g,h] :=  \scal{  D_{\Gamma,\omega} g}{h}_{H^{-1/2}(\Gamma) ,\tilde H^{1/2}_{0}(\Gamma)} 
\end{align}
where $g,h \in D[d_{\Gamma,\omega}] :=  \tilde H^{1/2}_0(\Gamma)$. Using the dual 
pairing \eqref{eq:dual_pairing} we have for $g,h \in \tilde H^{1/2}_0(\Gamma)$  
\begin{align}\label{eq:d_ell}
	d_{\Gamma, \omega}[g,h] &= \scal{D_\omega g}{h}_{\RR} = \int_{\RR} m_\omega(\xi)  \cdot \hat g(\xi) \; \overline{\hat h(\xi)}   \; \mathrm{d} \xi 
\end{align}
since $D_{\omega} g \in H^{-1/2}(\RR)$ is an extension of $D_{\Gamma, \omega }g \in H^{-1/2}(\Gamma)$.
Lemma \ref{lemma:var_D_omega_elast} implies that 
$$ d_{\Gamma,\omega}[g,h]  = \scal{D_\omega g}{h}_{\RR } =  a_{\Gamma_+} [ K_\omega g , K_\omega h] -
  \omega \scal{ K_\omega g}{K_\omega h}_{\Omega_+}  . $$
We note that the expression \eqref{eq:d_ell} is  independent of $\Gamma$; in particular   the dependence on $\Gamma$ enters 
as a constraint  on the support of the functions $g$ and $h$. 
\begin{lemma}\label{lemma:form_DtoN}
	Let $\omega \in \CC \backslash [\Lambda, \infty)$. Then  $d_{\Gamma,\omega}$ defines a closed sectorial  form 
	in $L_2(\Gamma)$. The associated m-sectorial  operator
	is the restriction of $D_{\Gamma,\omega} $ to  the operator domain
  	\begin{equation}  X_{\Gamma,\omega}  := \left\{ g \in \tilde H^{1/2}_0 (\Gamma) :   D_{\Gamma,\omega} g \in L_2(\Gamma) \right\} . \end{equation}
	If $\omega$ is real, then the associated  operator is self-adjoint. 
\end{lemma}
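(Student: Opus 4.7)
The plan is to realise $d_{\Gamma,\omega}$ as a bounded form-perturbation of a closed symmetric reference form, and then read off the associated operator from the first representation theorem. I take the reference point $\eta=0$. By Theorem \ref{th:perturb-to-N}(2), $D_0^*=D_0$, so the multiplier
\[
  m_0(\xi)=\xi\cdot\frac{\cosh(\pi\xi)-\bigl(1+\tfrac{\pi^2\xi^2}{2}\bigr)}{\sinh(\pi\xi)+\pi\xi}
\]
is real-valued, and $d_{\Gamma,0}[g,h]=\int_\RR m_0(\xi)\hat g(\xi)\overline{\hat h(\xi)}\,\mathrm{d}\xi$ defines a symmetric sesquilinear form on $\tilde H^{1/2}_0(\Gamma)$.

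The first step is to show that $d_{\Gamma,0}$ is closed and nonnegative in $L_2(\Gamma)$ with form domain $\tilde H^{1/2}_0(\Gamma)$. The Taylor expansions of $\cosh$ and $\sinh$ at $\xi=0$ show that $m_0$ extends continuously through $\xi=0$ with $m_0(0)=0$, and that $m_0\ge 0$ on $\RR$. At infinity the hyperbolic dominance $\cosh(\pi\xi),|\sinh(\pi\xi)|\sim\tfrac{1}{2}\mathrm{e}^{\pi|\xi|}$ gives $m_0(\xi)=|\xi|+O(1)$ as $|\xi|\to\infty$. Consequently there exist $c,C,C'>0$ with
\[
  c\,\|g\|_{H^{1/2}(\RR)}^{2}-C\,\|g\|_{L_2(\Gamma)}^{2}\;\le\;d_{\Gamma,0}[g]\;\le\;C'\,\|g\|_{H^{1/2}(\RR)}^{2},
\]
so the form norm is equivalent to the $H^{1/2}(\RR)$-norm on $\tilde H^{1/2}_0(\Gamma)$. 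Since $\tilde H^{1/2}_0(\Gamma)$ is a closed subspace of $H^{1/2}(\RR)$, it is complete in this norm, and hence $d_{\Gamma,0}$ is closed in $L_2(\Gamma)$.

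The second step is the perturbation. Specialising Theorem \ref{th:perturb-to-N}(4) to $\eta=0$ gives
\[
  D_\omega-D_0 \;=\; -\omega\,K_0^*K_0 \;-\; \omega^2\,K_0^*\bigl(A_{\varnothing+}^{(2)}-\omega\bigr)^{-1}K_0.
\]
By Lemma \ref{lemma:mapping_prop_K_D} applied with $s=0$, $K_0$ extends to a bounded operator $L_2(\RR)\to L_2(\Omega_+;\CC^2)$, and since $\omega\in\CC\setminus[\Lambda,\infty)\subseteq\rho(A_{\varnothing+}^{(2)})$ the resolvent is bounded on $L_2(\Omega_+;\CC^2)$. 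Therefore $q[g,h]:=d_{\Gamma,\omega}[g,h]-d_{\Gamma,0}[g,h]$ is a bounded sesquilinear form on $L_2(\Gamma)\times L_2(\Gamma)$. By Kato's form perturbation theorem (Kato, Chapter VI, Theorem 3.4), $d_{\Gamma,\omega}=d_{\Gamma,0}+q$ is closed and sectorial in $L_2(\Gamma)$ with unchanged form domain $\tilde H^{1/2}_0(\Gamma)$.

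Finally, the first representation theorem yields an m-sectorial operator $T$ whose domain consists of those $g\in\tilde H^{1/2}_0(\Gamma)$ for which $v\mapsto d_{\Gamma,\omega}[g,v]$ is $L_2(\Gamma)$-continuous. Using the dual pairing \eqref{eq:dual_pairing} together with density of $C_c^\infty(\Gamma)$ in $\tilde H^{1/2}_0(\Gamma)$, this is equivalent to $D_{\Gamma,\omega}g\in L_2(\Gamma)$, so $D(T)=X_{\Gamma,\omega}$ and $T=D_{\Gamma,\omega}|_{X_{\Gamma,\omega}}$. When $\omega\in\RR$, Theorem \ref{th:perturb-to-N}(2) yields $D_\omega^*=D_\omega$, hence $d_{\Gamma,\omega}$ is symmetric and $T$ is self-adjoint. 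The main obstacle is the coercivity estimate for $d_{\Gamma,0}$: one must track the exponential cancellations between numerator and denominator of $m_0$ to extract the $|\xi|$ leading term. Once this is established, the perturbation and identification steps are routine.
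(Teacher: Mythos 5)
Your proof is correct, and it takes a genuinely different route from the paper's. The paper proves closedness and sectoriality for general $\omega$ directly, via the variational identity
$d_{\Gamma,\omega}[g]=a_{\Gamma+}[K_\omega g]-\omega\|K_\omega g\|^2_{L_2(\Omega_+;\CC^2)}$
(Lemma \ref{lemma:var_D_omega_elast}), combined with the $H^1$-mapping property of $K_\omega$ and the trace theorem to get the two-sided bounds on $\Re(d_{\Gamma,\omega}[g])$. You instead split off the reference form $d_{\Gamma,0}$, verify its closedness and nonnegativity by hand from the explicit symbol $m_0$ (checking $m_0\ge 0$ via the even power series and $m_0(\xi)=|\xi|+O(1)$ via exponential dominance), and then treat $d_{\Gamma,\omega}-d_{\Gamma,0}$ as an $L_2$-bounded form perturbation obtained from Theorem \ref{th:perturb-to-N}(4) with $\eta=0$. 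Both approaches deliver the same conclusion; yours has the virtue of isolating exactly where the Fourier symbol enters and reducing the rest to the standard KLMN-type perturbation theorem, while the paper's avoids the pointwise analysis of $m_0$ entirely and works with the Poisson operator at the actual spectral parameter $\omega$. The remaining steps — the domain identification via the dual pairing \eqref{eq:dual_pairing} and density of $C_c^\infty(\Gamma)$ in $\tilde H_0^{1/2}(\Gamma)$, and self-adjointness for real $\omega$ from $D_\omega^*=D_{\overline\omega}$ — are essentially the same in both arguments.

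One small remark: when you invoke Lemma \ref{lemma:mapping_prop_K_D} with $s=0$ to bound $K_0^*K_0$ on $L_2$, it is in fact cleaner to use the boundedness $K_0\in\mathcal L(H^{-1/2}(\RR),L_2(\Omega_+;\CC^2))$ stated explicitly in Theorem \ref{th:perturb-to-N}, since that is exactly the duality pairing under which the adjoint $K_0^*$ maps $L_2(\Omega_+;\CC^2)$ into $H^{1/2}(\RR)\subseteq L_2(\RR)$. Either route yields the required boundedness of $q$ on $L_2(\Gamma)\times L_2(\Gamma)$, so this is a matter of presentation rather than a gap.
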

\begin{proof}
  	For  $g \in \tilde H^{1/2}_0(\Gamma) $ we use the identity
	\begin{align*}
		d_{\Gamma,\omega}[g] & = d_{\Gamma,\omega}[g,g] = \scal{D_\omega g}{g}_{\RR } =  a_{\Gamma_+}[ K_\omega g] - \omega \| K_\omega g\|_{L_2(\Omega_+;\CC^2)}^2 .
	\end{align*}
	Let $u  := K_\omega g$. The mapping property of the Poisson operator $K_\omega$ implies 
  	$$ |d_{\Gamma, \omega} [g]| \le c   \| u \|^2_{H^1(\Omega_+;\CC^2)} \le c_1 \| g \|^2_{\tilde H^{1/2}_0(\Gamma)}  . $$
	Again the mapping properties of the Poisson operator and the trace theorem imply that 
  	\begin{align*}
   		\Re( d_{\Gamma, \omega} [g]) &=  \| u \|^2_{H^1(\Omega_+;\CC^2)}  - \Re  (\omega - 1)   \| u \|_{L_2(\Omega_+;\CC^2)}^2 \\
  		&\ge c_1   \|g\|^2_{\tilde H^{1/2}_0(\Gamma)}  -  c_2 \| g\|^2_{H^{-1/2}(\RR)}
  		\ge 
  			c_1 \|g\|^2_{\tilde H^{1/2}_0(\Gamma)}  - c_3 
  			\| g\|^2_{L_2(\Gamma)} . 
  	\end{align*}
  	Thus, $d_{\Gamma, \omega}$ is closed and accretive. Analogously it follows that $d_{\Gamma,\omega}$ is sectorial.  Moreover, 
 	a short calculation implies that the associated m-sectorial operator is exactly the restriction 
	of $D_{\Gamma, \omega}$ to $X_{\Gamma,\omega}$. Finally, if $\omega \in \RR$ then $d_{\Gamma, \omega}[g] \in \RR$, and thus, the associated operator is self-adjoint. 
\end{proof}
As a particular consequence of Lemma \ref{lemma:form_DtoN} we obtain that 
$$ \mathrm{ker} ( D_{\Gamma,\omega} ) \subseteq X_{\Gamma,\omega} . $$ 
Since the spectrum of $A_{\Gamma+}^{(2)}$ is a subset of the real axis  we may restrict ourselves the case $\omega \in \RR$ and we can apply methods from spectral theory
to determine whether zero is an eigenvalue of $D_{\Gamma ,\omega} $ or not. 
Note that  $D[d_{\Gamma,\omega}] = \tilde H^{1/2}_0(\Gamma)$ is compactly embedded\footnote{cf. \cite[Theorem 3.27]{McLean}.} into $L_2(\Gamma)$, and thus,
the spectrum of the m-sectorial realisation consists of a discrete set of eigenvalues only accumulating at infinity. Moreover, the proof of Lemma  \ref{lemma:form_DtoN} and \cite[Theorem 2.34]{McLean}
imply that the original operator $D_{\Gamma, \omega} : \tilde H^{1/2}_0(\Gamma) \to H^{-1/2}(\Gamma)$ is a Fredholm operator with
zero index. 

Naturally the above considerations remain valid if we replace $\Gamma$ by $\Gamma_\ell := \ell \cdot \Gamma$. 
We note that the operators $D_{\Gamma_\ell, \omega}$ are each acting  in a different Hilbert space for different $\ell > 0$.
To obtain a family of operators acting in the same space we introduce the scaling operators
\begin{equation}
 T_{\ell}  :  L_2(\Gamma)  \to  L_2(\Gamma_\ell), \qquad (T_\ell g)(x) = 
	\ell^{-1/2} \cdot g (x/\ell)  
\end{equation}
and note that the operator $T_\ell$ bijectively maps  $\tilde H^{1/2}_0(\Gamma)$ into $\tilde H^{1/2}_0(\Gamma_\ell)$. Let 
\begin{align}
	\mathcal Q(\ell, \omega) : \tilde H^{1/2}_0(\Gamma) \to H^{-1/2}(\Gamma), \qquad \mathcal Q(\ell, \omega) := T_\ell^* D_{\Gamma_\ell, \omega}  T_\ell 
\end{align}
with the associated sesquilinear form 
\begin{align}
	q(\ell, \omega ) [g,h] := d_{\Gamma_\ell,\omega} [T_\ell g, T_\ell h ]  , \qquad g , h \in D[q(\ell, \omega ) ] := \tilde H^{1/2}_{0}(\Gamma) . 
\end{align}
Then for $\ell > 0$ and $\omega \in \CC \backslash [\Lambda,  \infty)$ we have 
$$ \mathrm{dim}\; \mathrm{ker} (A_{\Gamma_\ell }^{(2)} - \omega) = \mathrm{dim}\;  \mathrm{ker}(  D_{\Gamma_\ell, \omega} ) = \mathrm{dim}\; \mathrm{ker}(\mathcal Q(\ell, \omega) ) . $$ 
From \eqref{eq:d_ell} we obtain for  $g, h \in \tilde H^{1/2}_0(\Gamma)$ 
\begin{align*}
	q(\ell, \omega )[g,h] = d_{\Gamma_\ell, \omega} [T_\ell g, T_\ell h] &
  	= \ell \int_\RR  m_\omega(\xi)  \cdot \hat g(\ell \xi ) \; \overline{\hat h(\ell \xi)}  \; \mathrm{d} \xi 
  	= \int_\RR   m_\omega\left(\xi/\ell \right) \cdot \hat g(\xi) \; \overline{\hat h(\xi)}  \; \mathrm{d} \xi .
\end{align*}

Next we describe the behaviour of the form $q(\ell,\omega)$ as $\ell \to 0$ and $\omega \to \Lambda$. This  asymptotic expansion will  represent
the principal tool for the proof of Theorem \ref{th:main_2D}. 
Let  $\mathcal Q_0 : \tilde H_0^{1/2}(\Gamma) \to H^{-1/2}(\Gamma)$, 
\begin{align}
	\scal{\mathcal Q_0 g}{h}_{\Gamma} := q_0[g,h] := \int_{\RR} |\xi| \cdot \hat g(\xi) \; \overline{\hat h (\xi)}  \; \mathrm{d} \xi .
\end{align}
We note  that   $\mathcal Q_0 : \tilde H^{1/2}_0(\Gamma) \to  H^{-1/2}(\Gamma)$ is 
a Fredholm operator with Fredholm index $0$, which follows from \cite[Theorem 2.34]{McLean}. Hence,  $\mathcal Q_0$ is invertible since it has trivial kernel. Indeed, the identity $\mathcal Q_0 g = 0$ implies that 
$$ 0= \scal{\mathcal Q_0 g}{g}_{\Gamma} = \int_{\RR} |\xi| \cdot | \hat g(\xi)|^2  \; \mathrm{d} \xi , $$
and thus, $g = 0$.
Furthermore, we denote by  $P_\pm$    the projection onto the subspace in $L_2(\Gamma)$ spanned by the functions 
\begin{align}
	\Phi_\pm (x_1) := \mathrm{e}^{\pm \mathrm i \varkappa x_1}  
\end{align}
and let 
$\psi_{\pm \kappa}  = (\psi_{\pm  \varkappa,1}, \psi_{\pm \varkappa,2})^T \in  L_2(I_+;\CC^2)$
be chosen such that 
\begin{align}
	A^{(2)}_{\varnothing+}(\pm \varkappa) \psi_{\pm \varkappa}= \Lambda \psi_{\pm  \varkappa} \qquad \text{and} \qquad 
		\|\psi_{\pm \varkappa} \|_{ L_2(I_+;\CC^2)} = 1 , 
\end{align}
cf.\ also Formula  \eqref{eq:eigenfuncion_elast}, where a non-normalised eigenfunction for the unitarily equivalent
operator $A_{\varnothing}^{(2)}$ is given. 
\begin{theorem}\label{th:asymptotics_q}
	 There exists $\ell_0 > 0$ and $\varepsilon > 0 $  such that  for all $\ell \in (0,\ell_0)$ and $| \omega - \Lambda| < \varepsilon $ 
	the following expansion holds true 
	\begin{align}
 		\mathcal Q(\ell, \omega) &= \frac1\ell \mathcal Q_0 -  \frac{4 |\Gamma| \cdot  |\partial_2 \psi_{\varkappa, 2} (0)|^2}{\sqrt{\Lambda -\omega} \cdot \sqrt{2 \zeta_1 ''(\varkappa) }}
		 \;  T_\ell^* \bigr( P_+ + P_- \bigr)  T_\ell +  R(\ell,\omega) . 
	\end{align}
	Here  $|\Gamma|$ is the Lebesgue measure of $\Gamma$ and the remainder  satisfies the following estimate
	$$  \sup\{ \| R(\ell,\omega) \|_{\mathcal L(L_2(\Gamma))} :  \ell \in (0,\ell_0) \; \wedge \; |\omega - \Lambda| < \varepsilon \} < \infty . $$
\end{theorem}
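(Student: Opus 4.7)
\emph{Strategy.} The plan is to split $m_\omega(\xi) = |\xi| + (m_\omega(\xi) - |\xi|)$ and treat each piece under the rescaling $\xi \mapsto \xi/\ell$ in the integral representation of $q(\ell,\omega)$. The $|\xi/\ell| = |\xi|/\ell$ part immediately produces the leading term $\frac{1}{\ell}\mathcal Q_0$. For the correction $m_\omega(\xi) - |\xi|$ one checks, either from the explicit formula for $m_\omega$ or from Theorem \ref{th:perturb-to-N} at a fixed regular spectral parameter $\eta$, that it is meromorphic in $\xi$, exponentially decaying at infinity, and uniformly bounded away from the discrete set of dispersion zeros. The only singularity relevant as $\omega \to \Lambda$ is the one produced by the lowest dispersion branch $\zeta_1$ at the band-edge $(\xi,\omega) = (\pm\varkappa, \Lambda)$.

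\emph{Residue at the band-edge.} The main step is to compute the singular part of $m_\omega(\xi)$ near $(\pm\varkappa, \Lambda)$. Choose $\eta < 0$ real. By formula (4) of Theorem \ref{th:perturb-to-N}, in the fibre representation,
\[
m_\omega(\xi) = m_\eta(\xi) - (\omega - \eta)\|K_\eta(\xi)\|^2 - (\omega-\eta)^2 \scal{(A_{\varnothing+}^{(2)}(\xi) - \omega)^{-1} K_\eta(\xi)}{K_\eta(\xi)}.
\]
The fibre resolvent admits the rank-one expansion $|\psi_\xi\rangle\langle\psi_\xi|/(\zeta_1(\xi)-\omega)$ modulo a bounded part near the band-edge. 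To evaluate the pairing with $\psi_\xi$, apply Lemma \ref{lemma:var_K_omega_elast} with the test function $v = \psi_\xi$: since $\psi_{\xi,2}(0) = 0$ for eigenfunctions in the fibre of $h_{2+}$ while the normal derivative $\partial_2\psi_{\xi,2}(0)$ does not vanish, a Green's-identity computation on $I_+$ yields $\scal{K_\omega(\xi)\cdot 1}{\psi_\xi} = 2\overline{\partial_2\psi_{\xi,2}(0)}/(\zeta_1(\xi)-\omega)$. Combining these identities gives
\[
m_\omega(\xi) = -\frac{4|\partial_2\psi_{\xi,2}(0)|^2}{\zeta_1(\xi)-\omega} + m_\omega^{\mathrm{reg}}(\xi)
\]
in a neighbourhood of $(\pm\varkappa,\Lambda)$, with $m_\omega^{\mathrm{reg}}$ uniformly bounded; the symmetry $\xi\mapsto -\xi$ yields the same residue at $(-\varkappa,\Lambda)$.

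\emph{Extraction and remainder.} Inserting the singular piece into $q(\ell,\omega)[g,h] = \int_\RR m_\omega(\xi/\ell)\hat g(\xi)\overline{\hat h(\xi)} d\xi$, I localise to a neighbourhood of $\xi = \pm\ell\varkappa$, Taylor-expand the denominator as $\zeta_1(\xi/\ell)-\omega = (\Lambda-\omega) + \frac{\zeta_1''(\varkappa)}{2\ell^2}(\xi\mp\ell\varkappa)^2 + O(\ell^{-3}|\xi\mp\ell\varkappa|^3)$, and replace $\hat g(\xi)\overline{\hat h(\xi)}$ by its value at $\pm\ell\varkappa$ on the support of the Lorentzian. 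Evaluating
\[
\int_\RR \frac{d\tau}{(\Lambda-\omega) + \frac{\zeta_1''(\varkappa)}{2}\tau^2} = \frac{\pi\sqrt{2}}{\sqrt{(\Lambda-\omega)\zeta_1''(\varkappa)}}
\]
by residues and identifying $\hat g(\pm\ell\varkappa)\overline{\hat h(\pm\ell\varkappa)}$ with the matrix element of $T_\ell^* P_\pm T_\ell$ via the definitions of $T_\ell$ and $P_\pm$ produces the stated middle term after summing the two contributions from $\pm\varkappa$. The principal obstacle is the Green's-identity step in the residue computation, which requires precise tracking of boundary traces on the fibre $I_+$ and uses crucially the mixed boundary conditions of $A_{\varnothing+}^{(2)}(\xi)$. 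The remainder $R(\ell,\omega)$ collects three uniformly bounded contributions: the bounded part $m_\omega^{\mathrm{reg}}$ together with $m_\omega - |\cdot|$ away from $\pm\varkappa$; the cubic Taylor correction in the denominator; and the first-order error in the approximation $\hat g(\xi) \approx \hat g(\pm\ell\varkappa)$ on the Lorentzian support. Uniformity in $\omega$ over a complex disc around $\Lambda$ follows because the higher dispersion branches $\zeta_k$, $k \geq 2$, stay strictly above $\Lambda$.
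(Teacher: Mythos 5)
Your strategy shares the same starting point as the paper (the resolvent identity from Theorem~\ref{th:perturb-to-N}~(4)), but thereafter you diverge: rather than deforming the $\xi$-contour into the complex plane and applying the residue theorem at the complex zeros $\xi_{j+}(\omega)$ of the Rayleigh--Lamb function (the paper's route, controlled by Lemma~\ref{lemma:sing_elastic}), you stay on the real axis, extract the singular part of the symbol $m_\omega(\xi)$ near $(\pm\varkappa,\Lambda)$, and evaluate a Lorentzian integral explicitly. Your residue computation for $m_\omega$ is essentially correct in content, though there is an index slip: in the pairing appearing in formula~(4) the Poisson operator is $K_\eta(\xi)$, not $K_\omega(\xi)$, and Green's identity therefore produces $\scal{K_\eta(\xi)\,1}{\psi_\xi} = \mp 2\overline{\partial_2\psi_{\xi,2}(0)}/(\zeta_1(\xi)-\eta)$ with denominator $\zeta_1(\xi)-\eta$, which is bounded away from zero; the pole in $\zeta_1(\xi)-\omega$ then enters only through the spectral resolution of $(A_{\varnothing+}^{(2)}(\xi)-\omega)^{-1}$, and the factors $(\omega-\eta)^2/(\zeta_1(\xi)-\eta)^2 \approx 1$ near the band edge give your stated singular coefficient.

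The genuine gap is the uniformity of your remainder estimate, specifically your claim that ``the first-order error in the approximation $\hat g(\xi)\approx\hat g(\pm\ell\varkappa)$ on the Lorentzian support'' is uniformly bounded. A Lipschitz bound $|\hat g(\xi)\overline{\hat h(\xi)} - \hat g(\ell\varkappa)\overline{\hat h(\ell\varkappa)}| \lesssim |\xi-\ell\varkappa|$ leads to
\[
\int_{|\xi-\ell\varkappa|<\ell\delta}\frac{|\xi-\ell\varkappa|\;\mathrm{d}\xi}{(\Lambda-\omega)+\tfrac{\zeta_1''(\varkappa)}{2\ell^2}(\xi-\ell\varkappa)^2}
\;\sim\; \ell^2\log\frac{1}{\Lambda-\omega},
\]
which is \emph{not} bounded uniformly over $\ell\in(0,\ell_0)$ and $|\omega-\Lambda|<\varepsilon$: holding $\ell$ fixed and letting $\omega\to\Lambda$ drives it to infinity. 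To rescue the bound one must observe that the linear term in the Taylor expansion of $\hat g\overline{\hat h}$ is odd about $\ell\varkappa$, while the quadratic Lorentzian is even, so that (to leading order) the linear term integrates to zero and only the quadratic Taylor remainder contributes, yielding $O(\ell^2)$ without the logarithm. The same issue recurs with your ``cubic Taylor correction in the denominator'': the contribution of $\zeta_1'''(\varkappa)$ generates a similar borderline integral, and one again needs the symmetric/odd structure of the correction to avoid a log. You do not invoke either of these cancellations, so the remainder estimate, as written, fails. The paper sidesteps the whole difficulty by evaluating the localized integrals exactly as residues at the complex poles $H(\pm\mathrm{i}\sqrt{\Lambda-\omega})$, where the smooth factors can be Taylor-expanded about the fixed points $\pm\varkappa$ with error $O(\sqrt{\Lambda-\omega})$, cancelling cleanly against the $1/\sqrt{\Lambda-\omega}$ residue coefficient.
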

The next section is devoted to the proof of Theorem \ref{th:asymptotics_q}. 
\subsection{The proof of Theorem \ref{th:asymptotics_q}}
For the proof we use the perturbation formula for the  Dirichlet-to-Neumann operator $D_\omega$ in   Theorem \ref{th:perturb-to-N} (4), which we apply for $\eta=0$. We obtain   
$$  D_\omega = D_0 - \omega K_0 ( I + \omega (A_{\varnothing+}^{(2)}  - \omega)^{-1}) K_0^*.  $$
Thus, for  $g,h \in \tilde H^{1/2}_0(\Gamma)$ we have 
\begin{align*}
	  \scal{\mathcal Q(\ell, \omega) g}{h}_\Gamma &=  q(\ell, \omega ) [g,h]   = d_{\Gamma_\ell, \omega} [T_\ell g, T_\ell h ] = \scal{D_\omega T_\ell g}{T_\ell hg}_\RR \\
	  &=   \scal{D_0  T_\ell g}{T_\ell h}_\RR -  \omega \scal{( I + \omega (A_{\varnothing+}^{(2)} - \omega)^{-1}) K_0 T_\ell g }{K_0 T_\ell h}_{\Omega_+}  \\
	  &=  q(\ell,0) [g, h] -  \omega  \scal{ ( I + \omega (A_{\varnothing+}^{(2)} - \omega)^{-1}) K_0 T_\ell g }{K_0 T_\ell h}_{\Omega_+} . 
\end{align*} 
Recall that 
$$ q(\ell, 0) [g,h] =  \int_{\RR} m_0(\xi/\ell) \cdot \hat g (\xi) \; \overline{\hat h(\xi)}  \; \mathrm{d} \xi  $$
with  
$$ m_0 (\xi) =  \xi \cdot \frac{\cosh (\pi \xi) - 1 - \frac{\pi^2 \xi^2}{2}}{ \sinh(\pi \xi) + \pi \xi } . $$
Using the estimate  $  m_0 (\xi) = |\xi| + \mathcal O ( 1 )$ we obtain 
\begin{align*}
	q(\ell,0)[g,h]  &= \frac{1}{\ell} \int_{\RR} |\xi| \cdot \hat g (\xi) \; \overline{\hat h(\xi)}  \; \mathrm{d} \xi  + \scal{ \tilde R (\ell) g}{h}_{\Gamma} 
	= \frac1\ell \scal{\mathcal Q_0 g}{h}_{\Gamma} + \scal{\tilde R (\ell) g}{h}_{\Gamma} ,  
\end{align*}
where 
$$  \sup\{ \| \tilde R (\ell) \|_{\mathcal L(L_2(\Gamma))} : \ell \in (0,1)  \} < \infty . $$
Now we treat the resolvent term. To this end we need to understand the behaviour of the solutions
of the Rayleigh-Lamb equation \eqref{eq:Rayleigh-Lamb}  as  $\omega \to \Lambda$. 
\begin{lemma}\label{lemma:sing_elastic}
	Let $\beta = \sqrt{\omega - \xi^2}$, $\gamma = \sqrt{\frac{\omega}2 - \xi^2}$. 
	There exist  $\Theta  > 0$ and $\varepsilon > 0  $ such that 
	for all $| \omega -  \Lambda| < \varepsilon $ the Rayleigh-Lamb equation 
	$$  \frac{\sin \left(\beta \frac{\pi}2\right)}{\beta}  \cos \left(\gamma \frac{\pi}2\right)
	\gamma^2 + \cos \left(\beta \frac{\pi}2\right) \frac{\sin \left(\gamma \frac{\pi}2\right)}{\gamma} \xi^2  = 0 $$
	has exactly four  solutions  in the infinite strip $\RR + \mathrm{i} [-\Theta,\Theta]$. For  $\omega \in \CC \backslash [\Lambda, \infty)$, two of these 
	solutions have strictly positive imaginary part and two of them have  strictly negative imaginary part. 
	There  exists a holomorphic function $H$ with $H (0) =\varkappa$ and $ H'(0) = (\zeta_1''(\varkappa)/2)^{-1/2} $ such that the solutions with positive imaginary part
	are given by
	$$    \xi_{1+}(\omega) = - H( - \mathrm{i} \sqrt{\Lambda - \omega}) \qquad \text{and} \qquad \xi_{2+}(\omega) = H   ( \mathrm{i}  \sqrt{\Lambda - \omega} )  . $$
\end{lemma}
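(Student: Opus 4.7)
The plan is to reduce the statement to inverting the local dispersion relation $\omega = \zeta_1(\xi)$ near its minimum at $\xi = \varkappa$, exploit the $\xi \mapsto -\xi$ symmetry of the Rayleigh--Lamb equation, and then exclude all other zeros in the horizontal strip $\RR + \mathrm{i}[-\Theta,\Theta]$.

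By Lemma~\ref{lemma:spectrum}(3), for $\omega \neq 0$ the Rayleigh--Lamb equation is equivalent to $\omega \in \sigma(A^{(2)}_\varnothing(\xi))$. Since this analytic family of type (a) has simple eigenvalues, the lowest branch $\zeta_1$ extends holomorphically from $\RR$ to a neighborhood in $\CC$; by the definition of $\varkappa$ we have $\zeta_1'(\varkappa) = 0$ and $\zeta_1''(\varkappa) > 0$, so $\xi = \varkappa$ is a zero of order exactly $2$ of $\xi \mapsto \zeta_1(\xi) - \Lambda$. I would therefore factor
\[
\zeta_1(\xi) - \Lambda = (\xi - \varkappa)^2 g(\xi)
\]
with $g$ holomorphic near $\varkappa$ and $g(\varkappa) = \zeta_1''(\varkappa)/2 > 0$. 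A single-valued holomorphic branch of $\sqrt{g}$ exists locally, so $\chi(\xi) := (\xi - \varkappa)\sqrt{g(\xi)}$ satisfies $\chi(\varkappa) = 0$ and $\chi'(\varkappa) = \sqrt{\zeta_1''(\varkappa)/2}$. By the holomorphic inverse function theorem $\chi$ has a local holomorphic inverse $H := \chi^{-1}$ with $H(0) = \varkappa$, $H'(0) = \sqrt{2/\zeta_1''(\varkappa)}$, and the identity $\zeta_1(H(t)) = \Lambda + t^2$ holds near $t=0$.

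For $\omega \in \CC \setminus [\Lambda,\infty)$ close to $\Lambda$, the branch convention for $\sqrt{\cdot}$ fixed in the paper gives $\Re\sqrt{\Lambda - \omega} > 0$, and $t^2 = \omega - \Lambda$ is solved precisely by $t = \pm\mathrm{i}\sqrt{\Lambda - \omega}$. Thus the solutions of $\zeta_1(\xi) = \omega$ near $\varkappa$ are exactly $H(\pm\mathrm{i}\sqrt{\Lambda - \omega})$, and by the $\xi \mapsto -\xi$ invariance of the Rayleigh--Lamb equation (only $\xi^2$, $\beta$, $\gamma$ enter) one obtains two additional solutions $-H(\pm\mathrm{i}\sqrt{\Lambda - \omega})$ near $-\varkappa$. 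A first-order expansion together with $\Re\sqrt{\Lambda-\omega} > 0$ and $H'(0) > 0$ identifies $H(\mathrm{i}\sqrt{\Lambda-\omega})$ and $-H(-\mathrm{i}\sqrt{\Lambda-\omega})$ as the two with strictly positive imaginary part, matching the stated $\xi_{2+}$ and $\xi_{1+}$.

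The main obstacle is ruling out any further roots in the entire strip $\RR + \mathrm{i}[-\Theta,\Theta]$. I would split into two regimes. On a compact piece $|\Re\xi| \leq M$ with $|\Im\xi| \leq \Theta$, the higher branches $\zeta_k(\xi)$, $k \geq 2$, are uniformly bounded away from $\Lambda$ (by known properties of the Lamb dispersion curves established in \cite{FoeWeidl}, $\inf_\RR \zeta_2 > \Lambda$, and this persists under a small complex deformation by continuity of the eigenvalues of the analytic family), and $\zeta_1$ stays bounded away from $\Lambda$ outside small disks around $\pm\varkappa$; Rouch\'e's theorem applied to the Rayleigh--Lamb function then yields exactly two zeros in each small disk and none elsewhere in the compact region. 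For $|\Re\xi| \geq M$ with $|\Im\xi| \leq \Theta$ I would perform a direct asymptotic analysis of the Rayleigh--Lamb expression: since $\beta,\gamma \sim |\Re\xi|$ have small imaginary part, the trigonometric factors remain close to their behavior on the real axis, and one shows that the equation admits no roots in this outer region provided $M$ is large enough and $\Theta$ is chosen small enough. Fixing $\varepsilon$, and then $\Theta$, accordingly yields the required four zeros and no more.
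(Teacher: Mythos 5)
Your proposal is correct and follows essentially the same route as the paper's proof: local quadratic factorization $\zeta_1(\xi)-\Lambda = G(\xi)^2$ (your $\chi$) near $\varkappa$, holomorphic inversion to obtain $H$ with $H'(0)=(\zeta_1''(\varkappa)/2)^{-1/2}$, the $\xi\mapsto-\xi$ symmetry for the pair of roots near $-\varkappa$, and an argument-principle/Rouch\'e count to exclude further zeros in the strip, using that the characteristic function blows up as $|\Re\xi|\to\infty$ and that the higher branches $\zeta_k$, $k\ge2$, stay bounded away from $\Lambda$. The only cosmetic difference is that the paper extends $\zeta_1$ holomorphically via the joint analyticity of the Rayleigh--Lamb function $\Psi(\xi,\omega)$ together with $\partial_\omega\Psi(\varkappa,\Lambda)\neq0$ and the implicit function theorem, whereas you invoke the general theory of analytic families of type (a) with simple eigenvalues; both routes yield the same local parametrization.
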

We recall that  $\zeta_1(\xi)$ is   the first eigenvalue of the operators $A_{\varnothing}^{(2)}(\xi)$ and $A_{\varnothing+}^{(2)}(\xi)$ for $\xi \in \RR$. Moreover, we have
$$ \zeta_1(\pm \varkappa) = \Lambda = \min\{ \zeta_1 (\xi) : \xi \in \RR \}  . $$
\begin{proof}
Let 
$$ \Psi(\xi, \omega) 
	:= 
      \frac{\sin \left(\beta \frac{\pi}2\right)}{\beta}  \cos \left(\gamma \frac{\pi}2\right)
      \gamma^2 + \cos \left(\beta \frac{\pi}2\right) \frac{\sin \left(\gamma \frac{\pi}2\right)}{\gamma} \xi^2 
$$
be the left-hand side of the Rayleigh-Lamb equation, where 
as before
$$ \beta = \sqrt{ \omega  - \xi^2} \qquad \text{and} \qquad \gamma = \sqrt{ \frac{\omega}2 - \xi^2}. $$ 
Expanding the sine and cosine functions into their power series shows that only powers of the square root function
with even exponent are present, and thus, $\Psi$ is holomorphic in both variables $(\xi,\omega) \in \CC^2$. We note that for values 
$\xi \in \RR$ and $\omega \in \CC$ with  $\Psi(\xi, \omega) = 0$ we necessarily have $\zeta_k (\xi) = \omega$ for some $k \in \NN$. 
In the case $\omega = \Lambda$ the function $\RR \ni \xi \mapsto \Psi(\xi , \Lambda  )$ has exactly two zeros $\pm \varkappa$, cf.\ Section \ref{subsec:dispersion_curves} or  \cite{FoeWeidl}. 
Each  of them has multiplicity $2$, i.e., we have 
\begin{equation}\label{eq:double_pole_Phi}
	\partial_\xi \Psi(\pm \varkappa, \Lambda) = 0 \qquad \text{and} \qquad \partial_\xi^2 \Psi(\pm \varkappa, \Lambda) \neq 0 .
\end{equation}
The argument principle for holomorphic functions implies that there exist two constants $\Theta > 0$ and $\delta > 0$ 
such that $\Psi(\cdot , \omega)$ has exactly 4 zeros counted with multiplicities
in the infinite strip $\RR + \mathrm{i} [-\Theta, \Theta]$ for $| \omega - \Lambda | < \varepsilon  $. Two of these zeros are located near $\varkappa$ and the others are near $-\varkappa$. 
Note that we used that $| \Psi(\xi , \omega ) |$ tends  to infinity as $\Re (\xi) \to \pm \infty$, locally uniform in $\omega$. 

For the sake of simplicity we consider for the rest of the proof only those zeros near $\varkappa$.
We note  that  the function $\zeta_1$ is chosen such that $\Psi(\xi, \zeta_1(\xi)) = 0$ for $\xi \in \RR$. Moreover, we have  $\partial_\omega \Psi (\varkappa, \Lambda) \neq 0$.
This  follows  by an numerical calculation,  
which can be made rigorous by approximating the corresponding power series, cf.\ also the considerations in \cite{FoeWeidl}.  
Thus, there exists 
neighbourhoods $U_\varkappa$ of $\varkappa$ and $V_\Lambda$ of $\Lambda$ so that for all $(\xi, \omega) \in U_\varkappa \times V_\Lambda$ the
identity $\Psi(\xi, \omega) = 0$ holds true if and only if $\omega = \zeta_1(\xi)$. 
Note that   $\zeta_1'(\varkappa) = 0$ since 
$\varkappa$ is a global minimum and  $\zeta_1''(\varkappa) \neq 0$. As $\zeta_1$ is real analytic
there exists a neighbourhood $V_0$ around $0$ and an invertible analytic 
function $G : U_\varkappa \to V_0 $ such that 
$$ \zeta_1(\xi) = \Lambda + G(\xi)^2 , \qquad \xi \in U_\varkappa  .$$
Setting $H := G^{-1}$ we observe that  the two  zeros of $\Psi(\cdot, \omega)$ near $\varkappa$ are given by
$$ G^{-1} (  \pm \mathrm{i} \sqrt{\omega - \Lambda}) = H  (  \pm \mathrm{i} \sqrt{\omega - \Lambda}) .$$
Note that  we may choose $G$ such that $G'(\varkappa) = (\zeta_1''(\varkappa)/2)^{1/2} $, and thus, 
$H' (0) = (\zeta_1''(\varkappa)/2)^{-1/2} > 0$.  The Taylor expansion of $H$ shows that one zero has strictly positive
 imaginary part, the other strictly negative imaginary part. 
\end{proof}

Now we may give an estimate for the resolvent term. Let $g, h \in \tilde H^{1/2}_0(\Gamma)$. For ease of notation we put $g_\ell := T_\ell g$ and $h_\ell := T_\ell h$. Then 
\begin{align}	
	&\omega\scal{( I + \omega ( A_{\varnothing +}^{(2)} - \omega)^{-1} ) K_0 T_\ell g }{K_0 T_\ell h}_{\Omega+} \notag\\
	&\quad= \omega \int_{\RR} \scal{(I + \omega (A_{\varnothing +}^{(2)} (\xi) - \omega)^{-1}) K_0 (\xi) \hat g_\ell (\xi)}{K_0 (\xi) \hat h_\ell(\xi)}_{I+}  . 
	\label{eq:F(xi)}
\end{align}
In what follows we use that $K_0(\cdot ) : \CC \to H^2(I_+;\CC^2) \cap h_{2+}$ is a finitely meromorphic\footnote{For the definition of finitely meromorphic functions we refer to \cite{GohbergSigal}.} function with  singularities, which  are located at most at those points $\xi \in \CC$ which solve the Rayleigh-Lamb equation $ \Psi (\xi, 0) = 0$. In particular we may choose $\Theta > 0$ and  $\varepsilon > 0$ such that $K_0(\cdot)$ is holomorphic in $\RR + \mathrm{i} [-2 \Theta, 2 \Theta]$ and such that for all  $|\omega - \Lambda | < \varepsilon$ the Rayleigh-Lamb equation has exactly four solutions in the infinite strip $\RR + \mathrm{i} [-  2 \Theta, 2\Theta ]$.
Let 
$$ F_\omega (\xi ) :=  \scal{( I + \omega(A_{\varnothing +}^{(2)}(\xi) - \omega)^{-1} ) K_0(\xi) \hat g_\ell  (\xi) }{ 
	 	K_0(\overline{\xi}) \hat h_\ell (\overline{\xi}) ) }_{I+}  $$
be the integrand in \eqref{eq:F(xi)}. Then $F_\omega$ may be extended to a meromorphic function on the strip 
$\RR  + \mathrm i [- 2 \Theta, 2 \Theta]$ since the functions $\hat g_\ell , \hat h_\ell $ have  compact support, 
and thus, $\hat g_\ell, \hat h_\ell$ may be extended to holomorphic functions on all of $\CC$. We have 
\begin{align*}
	\int_{\RR} F_\omega(\xi)  \; \mathrm{d} \xi &= \int_{-\varkappa - }^{-\varkappa + \delta} F_\omega(\xi)   \; \mathrm{d} \xi
		+ \int_{\varkappa - \delta}^{\varkappa +  \delta} F_\omega(\xi)   \; \mathrm{d} \xi  + \left( \int_{-\infty}^{-\varkappa - \delta} + 
		\int_{-\varkappa + \delta}^{\varkappa - \delta} + \int_{\varkappa + \delta}^\infty \right) 
		F_\omega(\xi)  \; \mathrm{d} \xi  
\end{align*}
for some  $\delta > 0$. Note that
\begin{align*} 
	\left|  \int_{-\infty}^{-\varkappa - \delta}  F_\omega(\xi)  \; \mathrm{d} \xi  \right| 
	& \le   \sup_{\xi < - \varkappa - \delta} \| (I + \omega ( A_{ \varnothing +}^{(2)}(\xi) - \omega)^{-1}\| \cdot 
		\| K_0 g_\ell \|_{L_2(\Omega_+;\CC^2)}^2  \| K_0 h_\ell \|_{L_2(\Omega_+;\CC^2)}^2  \\ 
	&\quad \le C \| g  \|_{L_2(\Gamma)} \cdot \| h  \|_{L_2(\Gamma)} = \mathcal O(1) ,
\end{align*}
since  the resolvent  may be estimated by the distance of $\omega$ to the spectrum of $A_{\varnothing+}^{(2)}(\xi)$ and 
\begin{align*}
	\int_\RR \| K_0 (\xi) \hat g_\ell (\xi)\|_{L_2(I_+)}^2  \; \mathrm{d} \xi = \| K_0 g_\ell  \|_{L_2(\Omega_+;\CC^2)}^2 
\end{align*}
In the same way we may treat the integrals $\int_{-\varkappa + \delta}^{\varkappa - \delta} F_\omega(\xi)  \; \mathrm{d} \xi $
and $\int_{\varkappa + \delta }^\infty  F_\omega(\xi)  \; \mathrm{d} \xi$. Thus, 
$$ \left( \int_{-\infty}^{-\varkappa - \delta} + 
	\int_{-\varkappa + \delta}^{\varkappa - \delta} + \int_{\varkappa + \delta }^\infty \right) 
	F_\omega(\xi)  \; \mathrm{d} \xi = \mathcal O(1) . $$
To estimate the remaining integrals we consider  the following expansion of the resolvent 
\begin{align*}	
	I + \omega (A_{\varnothing+}^{(2)} (\xi) - \omega )^{-1} &=  \sum_{k=1}^\infty 
		\left( 1 + \frac{\omega }{\zeta_k(\xi) - \omega } \right) P_k (\xi)   \\
	& = \frac{\zeta_1(\xi) }{\zeta_1(\xi) - \omega}  P_1(\xi)  +   
		\sum_{k=2}^\infty \left( 1 + \frac{\omega }{\zeta_k(\xi) - \omega } \right)P_k(\xi)  , 
\end{align*}
where  $P_k(\xi)$ is the projection onto the eigenspaces $\ker (A_{\varnothing+}^{(2)}(\xi)  - \zeta_k(\xi)) $.
Note that  
$$ \biggr\|  \sum_{k=2}^\infty \left( 1 + \frac{\omega }{\zeta_k(\xi) - \omega } \right) P_k(\xi) \biggr\|_{\mathcal L (L_2( I_+))} \le 
	1 + \frac{\omega }{ \min \left\{ \zeta_2(\xi ) : \xi \in \RR \right\} - \omega  } \le \tilde c ,  $$
for $|\omega - \Lambda| < \varepsilon$. Thus, 
$$ 
\int_{\pm\varkappa - \delta}^{\pm\varkappa + \delta} F_\omega (\xi)   \; \mathrm{d} \xi = 
	\int_{\pm \varkappa - \delta}^{\pm \varkappa + \delta} 
	\frac1{\zeta_1(\xi) - \omega}  \scal{P_1(\xi) K_0(\xi) \hat g_\ell(\xi) }{ K_0(\xi) \hat h_\ell(\xi)}_{I_+}  \; \mathrm{d} \xi + \mathcal O(1) . $$ 
Now we choose two paths  $\gamma_j$, $j =1,2$,  in the complex plane which 
run around the boundaries  of the following rectangles except for the two line segments on the real axis: 
\begin{center}
\begin{tikzpicture}
\begin{scope}[font=\scriptsize]
	\draw[->]  (-4.5,0) -- (4.5,0);
	\draw[->]  (0,-0.5) -- (0,3.5);
	\draw  (-2.25,2pt) -- (-2.25,-2pt);
	\draw  (-2.3,-2pt)  node[anchor=north ] {$-\varkappa \vphantom{\delta}$}; 
	\draw  (-1,2pt) -- (-1,-2pt);
	\draw (-1.15,-2pt)  node[anchor=north ] {$-\varkappa +  \delta$}; 
	\draw  (1,2pt) -- (1,-2pt);
	\draw (1,-2pt)  node[anchor=north ] {$\varkappa -  \delta$}; 
	\draw (2.25,2pt) -- (2.25,-2pt)  node[anchor=north ] {$\varkappa \vphantom{\delta}$}; 
	\draw  (-3.5,2pt) -- (-3.5,-2pt);
	\draw (-3.65,-2pt)  node[anchor=north ] {$-\varkappa - \delta$}; 
	\draw  (3.5,2pt) -- (3.5,-2pt);
	\draw (3.5,-2pt)  node[anchor=north ] {$\varkappa +  \delta$};   
	\draw [-] (-3.5,0) -- (-1,0);
	\draw [<-] (-1,0) -- (-1,1.7);
	\draw [->] (-3.5,1.7) -- (-1,1.7);
	\draw [->] (-3.5,0) -- (-3.5,1.7);
	\draw [-] (3.5,0) -- (1,0);
	\draw [->] (1,0) -- (1,1.7);
	\draw [<-] (3.5,1.7) -- (1,1.7);
	\draw [<-] (3.5,0) -- (3.5,1.7);
	\draw (2,1.7)  node[anchor=south]  {$\gamma_2$};
	\draw (-2,1.7)  node[anchor=south]  {$\gamma_1$};
	\draw [dashed] (-4.5,1.7) -- (4.5,1.7) node[anchor=west] {$\mathrm{i} \Theta   $};
	\fill [opacity=0.1] (-3.5,0) rectangle (-1,1.7); 
	\fill [opacity=0.1] (3.5,0) rectangle (1,1.7); 
\end{scope}
\end{tikzpicture}
\end{center}
Applying   Lemma \ref{lemma:sing_elastic} we may assume that for $| \omega -\Lambda| < \varepsilon$ the function
$(\zeta_1(\cdot ) - \omega)^{-1}$ has exactly one singularity  in each rectangle. By  Lemma \ref{lemma:sing_elastic} 
the  singularities are given by 
$$ 	\xi_{1+ }(\omega) =  - H( - \mathrm{i} \sqrt{\Lambda - \omega})    \qquad \text{and} \qquad 
\xi_{2+ }(\omega) = H(\mathrm{i} \sqrt{\Lambda - \omega}) $$
for some holomorphic function $H$ satisfying $H(0) = \varkappa$ and  $H' (0) = (\zeta_1''(\varkappa)/2)^{-1/2} $. Note that $H( - \eta) = - H(\eta)$.
Since $\xi \mapsto P_1(\xi)$ depends holomorphically on $\xi$  the residue theorem implies 
\begin{align*}
	&\left( \int_{-\varkappa - \varepsilon}^{-\varkappa + \varepsilon} + \int_{\varkappa - \varepsilon}^{\varkappa + \varepsilon} \right)   F_\omega (\xi)  \; \mathrm{d} \xi \\
	&= 
	\left( \int_{\gamma_1}  + \int_{\gamma_2} \right) 
		\frac{\zeta_1 (\xi)}{\zeta_1 (\xi) - \omega} \scal{P_1(\xi) K_0(\xi) \hat g_\ell(\xi) }{K_0(\overline{\xi}) \hat h_\ell (\overline{\xi}) }_{I_+}  \; \mathrm{d} \xi\\ 
	 &\quad  +  2 \pi \mathrm{i} \bigr[  \mathrm{Res}_{\xi =\xi_{1+}(\omega)} + \mathrm{Res}_{\xi =\xi_{2+}(\omega)} \bigr]
 	\left(\frac{\zeta_1(\xi) \cdot \scal{P_1(\xi) K_0(\xi) \hat g_\ell(\xi) }{K_0(\overline{\xi}) \hat h_\ell(\overline{\xi}) }_{I+} }{\zeta_{1 }(\xi) - \omega} \right) \\ & \quad + \mathcal O(1) . 
\end{align*}
Note that 
$$ \int_{\gamma_j} 
	\frac{\zeta_1(\xi)}{\zeta_1 (\xi) - \omega} \scal{P_1(\xi) K_0(\xi) \hat g_\ell(\xi) }{K_0(\overline{\xi}) \hat h_\ell (\overline{\xi}) }_{I_+} = \mathcal O(1) \qquad \text{for} \quad  j=1,2  .  $$
For the residues  we have 
\begin{align*}
	\mathrm{Res}_{\xi =\xi_{1 + }(\omega)} \frac{1}{\zeta_1(\xi) - \omega} &= \mathrm{Res}_{\xi =\xi_{2 +}(\omega)} \frac{1}{\zeta_1(\xi) - \omega}  
		= \left. \frac{1}{\zeta_1'(\xi)} \right|_{\xi = \xi_{2+} (\omega)} \\
		&= \frac{1}{\zeta_1'(  H( \mathrm{i} \sqrt{\Lambda - \omega}))}  = \sum_{k=-1}^\infty c_k (\Lambda - \omega)^{k/2}
\end{align*}
for coefficients $c_k$. The singular term in  the Laurent series  is given by 
$$ c_{-1} = \frac{1}{\mathrm{i} \;  \zeta_1''(\varkappa) \;   H'(0)} = \frac{1}{\mathrm{i} \sqrt{2 \zeta_1''(\varkappa)}}  \neq 0 . $$
Expanding the term 
$  \scal{P_1(\xi) K_0(\xi) \hat g_\ell(\xi) }{K_0(\overline{\xi}) \hat h_\ell(\overline{\xi}) } $
into a Taylor series at $\xi = \pm \varkappa$ we obtain 
\begin{align*}
	\omega \int_\RR F_\omega (\xi)  \; \mathrm{d} \xi &= \frac{2 \pi \cdot \omega^2}{\sqrt{2 \zeta_1 ''(\varkappa)}\cdot \sqrt{\Lambda - \omega}}
	 \sum_{\diamond = \pm \varkappa} \scal{P_0(\diamond ) K_0 (\diamond )
	 \hat g_\ell ({\diamond )}}{K_0 (\diamond ) \hat h_\ell (\diamond )}_{I_+} + \mathcal O(1) \\
	 &= \frac{2 \pi \cdot \Lambda^2}{\sqrt{2 \zeta_1 ''(\varkappa)}\cdot \sqrt{\Lambda - \omega}}
	 \sum_{\diamond = \pm \varkappa} \scal{P_0(\diamond ) K_0 (\diamond )
	 \hat g_\ell ({\diamond )}}{K_0 (\diamond ) \hat h_\ell (\diamond )}_{I_+} + \mathcal O(1) .
\end{align*}
For $\diamond = \pm \varkappa$ we obtain 
$$  \Lambda^2 \scal{P_1(\diamond) K_0( \diamond ) \hat g_\ell (\diamond)}{
	K_0(\diamond ) \hat h(\diamond)}_{I+} 
	= \Lambda^2 \scal{ K_0(\diamond) g_\ell(\diamond) }{\psi_{\diamond}}_{I_+} \scal{\psi_{\diamond}}{ K_0 (\diamond) \hat h_\ell (\diamond)}_{I+} , $$
and 
\begin{align*}
	\Lambda \scal{ K_0(\diamond) g_\ell(\diamond) }{\psi_{\diamond}}_{I_+} 
	&=   \scal{K_0(\diamond) g_\ell(\diamond)}{A_{\varnothing+}^{(2)}(\diamond) \psi_{\diamond}}_{I_+} 
	=  - 2 \overline{\partial_n \psi_{\diamond,2} (0)} \cdot \hat g_{\ell}(\diamond) . 
\end{align*}
Note that 
\begin{align*}
	\hat g_{\ell}(\diamond) \cdot \overline{\hat h_\ell(\diamond)}  &= \frac{1}{2\pi} \left( \int_{\Gamma} \mathrm{e}^{-\mathrm{i}\diamond x } g_\ell (x)  \; \mathrm{d} x \right)\overline{ \left( \int_{\Gamma} \mathrm{e}^{-\mathrm{i} \diamond x } h_\ell(x)  \; \mathrm{d} x \right)} \\
	&= \frac{1}{2\pi} \scal{g_\ell }{\Phi_\pm}_\Gamma \cdot \scal{\Phi_\pm }{h_\ell }_\Gamma = \frac{|\Gamma|}{2 \pi} \scal{P_\pm g_\ell }{h_\ell}_{I_+}  ,
\end{align*}
where $|\Gamma|$ is the Lebesgue measure of $\Gamma$. 
Finally we obtain 
$$ \Lambda^2 \scal{P_1(\diamond) K_0(\diamond ) \hat h_\ell (\diamond)}{
	K_0( \diamond ) \hat h_\ell ( \diamond)}_{I+} = \frac{2|\Gamma| }{\pi} \cdot | \partial_n \psi_{\diamond,2} (0)|^2 \cdot \scal{P_\pm T_\ell g }{T_\ell h}_{I_+} , $$
which proves Theorem \ref{th:asymptotics_q} since $\psi_{\varkappa,2} =  \psi_{- \varkappa,2}$.

\subsection{The proof of Theorem \ref{th:main_2D}}\label{sec:proof_main_th_2D}
From Theorem \ref{th:asymptotics_q} we obtain 
\begin{align}\label{eq:first_asympt_q}
	\mathcal Q(\ell, \omega) = \frac1\ell \mathcal Q_0 +  \frac{4 | \Gamma| \cdot  |\partial_2 \psi_{\varkappa, 2} (0)|^2}{\sqrt{\Lambda -\omega}
		\cdot \sqrt{2 \zeta_1 ''(\varkappa) }}
		 \;  T_\ell^* \bigr( P_+ + P_- \bigr)  T_\ell +   R(\ell,\omega) 
\end{align}
with the following estimate on the remainder  
\begin{align}\label{est:R(ell,omega)}
	  \sup\{ \| R(\ell,\omega) \|_{\mathcal L(L_2(\Gamma))} :  \ell \in (0,\ell_0 ) \; \wedge \; |\omega - \Lambda| < \varepsilon \} < \infty .
\end{align}
\begin{remark}
	Using a similar argumentation as in Theorem \ref{th:asymptotics_q} it follows 
	that for every compact set $K \subseteq \CC \backslash [ \Lambda , \infty)$ there exists $\ell_0  = \ell_0(\Gamma, K)$ such that 
	$$ \mathcal Q(\ell, \omega) = \frac{1}\ell \mathcal Q_0 + \tilde R(\ell, \omega) , $$
	and the remainder satisfies  $$ \sup \{ \| \tilde R(\ell, \omega) \|_{\mathcal L(L_2(\Gamma))} : \omega \in K \; \wedge \ell \in (0, \ell_0 ) \} < \infty . $$
	Recalling that the operator $\mathcal Q_0$ is a invertible, we obtain  
	$$ \mathcal Q(\ell, \omega) = \frac1\ell \mathcal Q_0 \left( I + \ell \mathcal Q_0^{-1} \tilde R(\ell, \omega) \right) . $$
	Choosing $\ell> 0$ sufficiently small implies  that  $\mathcal Q(\ell, \omega)$ is invertible for all $\omega \in K$ and $\ell \in (0, \ell_0)$. In particular, $0$ cannot be an eigenvalue of $\mathcal Q(\ell, \omega)$. As a consequence 
	the discrete eigenvalues of the operator $A_{\Gamma_\ell}^{(2)} $ converge to $\Lambda $ as $\ell \to 0$. 
\end{remark}
Now we use the the symmetry of the problem  with respect to the axis $x_1 = 0$. We set  
\begin{align}
	L_{2,\mathrm{s}}(\Gamma) &:= \{ g \in L_2(\Gamma) : g(x_1) = g(-x_1) \} ,   \\
	L_{2,\mathrm{as}}(\Gamma) &:= \{ g \in L_2(\Gamma ) : g(x_1) = - g(-x_1) \}  
\end{align}
with projections $P_{\mathrm{s}}$ and $P_{\mathrm{as}}$. Recall that $\Gamma = - \Gamma$. 
Since in we are now considering   a mixed problem on  the upper half-strip  there will be  no risk of confusion with the spaces $\mathcal H^{(\mathrm{s})}$ and $\mathcal H^{(\mathrm{as})}$ introduced before.  
A simple  calculation shows that the forms $q(\ell,\omega)$ and $q_0$ decompose as follows
$$   q(\ell, \omega) =  q^{(\mathrm{s})}(\ell, \omega) \oplus q^{(\mathrm{as})}(\ell, \omega) \qquad \text{and} \qquad   q_0 = q_0^{(\mathrm{s})} \oplus q_0^{(\mathrm{as})} , $$
where 
 $q^{(\dagger)}(\ell, \omega)$ and $q_0^{(\dagger)}$ act in $L_{2,\dagger}(\Gamma)$
for $\dagger \in \{\mathrm{s},\mathrm{as}\}$. Thus, 
$$ \mathcal Q(\ell, \omega)  = P_{\mathrm{s}}^* \mathcal Q(\ell, \omega) P_{\mathrm{s}} + P_{\mathrm{as}}^* \mathcal Q(\ell, \omega) P_{\mathrm{as}}  $$
and Theorem \ref{th:asymptotics_q} implies 
\begin{align*}
	 P_{\dagger }^* \mathcal Q(\ell, \omega) P_{\dagger}  &=  \frac1\ell  P_{\dagger}^* \mathcal Q_0  P_{\dagger}  
		- \frac{4  | \Gamma|  \cdot  |\partial_2 \psi_{\varkappa, 2} (0)|^2}{\sqrt{\Lambda -\omega}
		\cdot   \sqrt{2 \zeta_1 ''(\varkappa) }}
		 \;  P_{\dagger}^*  T_\ell^*  \bigr( P_+ + P_- \bigr) T_\ell  P_{\dagger}  + P_{\dagger}^* R(\ell,\omega) P_\dagger   
\end{align*}
for $\dagger \in \{\mathrm{s},\mathrm{as}\}$. A short calculation shows that for fixed $\dagger \in \{ \mathrm{s}, \mathrm{as} \}$
the operators  $P_{\dagger}^*  T_\ell^*  P_+   T_\ell  P_{\dagger}$ and $P_{\dagger}^*  T_\ell^*  P_-    T_\ell  P_{\dagger}$
coincide. Indeed, we have 
\begin{align*}
	 P_{\mathrm{s}}^*  T_\ell^*  P_\pm    T_\ell  P_{\mathrm{s} }   
		= \frac{\ell}{|\Gamma|} \; \scal{\cdot  }{\Phi_\ell^{(\mathrm{s})}}_{\Gamma} \cdot \Phi_\ell^{(\mathrm{s})}  \qquad \text{and} \qquad
	P_{\mathrm{as}}^*  T_\ell^*  P_\pm    T_\ell  P_{\mathrm{as}}  
		= \frac{\ell}{|\Gamma|} \; \scal{\cdot  }{\Phi_\ell^{(\mathrm{as})}}_{\Gamma} \cdot \Phi_\ell^{(\mathrm{as})} , 
\end{align*}
where
\begin{align*}
	\Phi^{(\mathrm{s})}_\ell(x_1) := \cos (\varkappa \ell x_1) \qquad \text{and} \qquad \Phi^{(\mathrm{as})}_\ell(x_1) := \sin (\varkappa \ell x_1) .
\end{align*}
Thus, 
\begin{align*}
	 P_{\dagger }^* \mathcal Q(\ell, \omega) P_{\dagger}  &=  \frac1\ell  P_{\dagger}^* \mathcal Q_0  P_{\dagger}  
		- \frac{8  \ell \cdot  |\partial_2 \psi_{\varkappa, 2} (0)|^2}{\sqrt{\Lambda -\omega}
		\cdot   \sqrt{2 \zeta_1 ''(\varkappa) }} \; \scal{\cdot  }{\Phi_\ell^{(\dagger)}}_{\Gamma} \cdot \Phi_\ell^{(\dagger)}  
	 + P_{\dagger}^* R(\ell,\omega) P_\dagger  . 
\end{align*}
In particular $P_{\dagger}^*  T_\ell^*  (P_+ + P_-)    T_\ell  P_{\dagger} $ are rank-one operators for $\dagger \in \{\mathrm{s},\mathrm{as}\}$. 
To prove Theorem \ref{th:main_2D} we consider for real $\omega$ not only the kernel of the operator $\mathcal Q(\ell, \omega)$, 
but more generally the discrete eigenvalues of the self-adjoint realisation of  $P_\mathrm{\dagger}^* \mathcal Q(\ell, \omega) P_\mathrm{\dagger}$ in $L_{2, \dagger}(\Gamma)$ for $\dagger \in \{ \mathrm{s}, \mathrm{as} \}$.
For $\ell> 0$ and $\omega \in \RR  \backslash [\Lambda , \infty)$ we denote these  eigenvalues (counted with multiplicities) by 
 $$\mu_1^{(\dagger)}(\ell, \omega) \le \mu_2^{(\dagger)}(\ell, \omega) \le \ldots . $$  
\begin{lemma}\label{lemma:ev_D_ell_2D}
	Let $\ell_0 > 0$ and $\varepsilon > 0$ be  chosen as in Theorem \ref{th:asymptotics_q}.  For $\dagger  \in \{ \mathrm{s}, \mathrm{as} \}$ the  following assertions hold true:
	\begin{enumerate}
		\item For   $\ell > 0 $ the function $ \mu_1^{(\dagger)} (\ell, \cdot )$ is strictly decreasing in the interval 
		$(- \infty,  \Lambda)$. 
		\item For fixed $\ell \in (0,\ell_0)$ we have $ \mu_1^{(\dagger)} (\ell ,\omega) \to -\infty$ as $\omega \to \Lambda$. 
		\item For fixed $\omega \in ( \Lambda - \varepsilon ,   \Lambda )$ we have $\mu_1^{(\dagger)}(\ell, \omega) \to \infty$ as $\ell \to 0$. 
		\item  There exists $\tilde \ell_0  > 0   $ such that for all $\tilde \ell \in (0, \ell_0)$ and for 
		all $| \omega -  \Lambda| < \varepsilon  $ we have $\mu_2^{(\dagger)}(\ell, \omega) > 0$.
	\end{enumerate} 
\end{lemma}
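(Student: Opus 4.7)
The plan is to combine a monotonicity identity for the Dirichlet-to-Neumann form (for assertion (1)) with the asymptotic expansion \eqref{eq:first_asympt_q} and the min-max principle (for assertions (2)--(4)). Throughout I would work on $L_{2,\dagger}(\Gamma)$, using that the scaling $T_\ell$ preserves parity so that $T_\ell g \in L_{2,\dagger}(\Gamma_\ell)$ for $g \in L_{2,\dagger}(\Gamma)$.

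For (1), I would differentiate the identity $D_\omega = D_{\overline{\eta}} - (\omega - \overline{\eta}) K_\eta^* K_\omega$ from Theorem \ref{th:perturb-to-N}(3) with respect to $\omega$ and specialise to $\eta = \omega \in \RR$, which yields the formal identity $\tfrac{d}{d\omega} D_\omega = -K_\omega^* K_\omega$ (the prefactor $\omega - \overline{\eta}$ vanishes at $\eta = \omega$ real, so the extra term arising from the implicit $\omega$-dependence of $K_\omega$ drops out). Since the trace of $K_\omega g$ at $x_2 = 0$ is $g$, the Poisson operator $K_\omega$ is injective, so $\omega \mapsto \langle D_\omega g, g\rangle_{\RR}$ is strictly decreasing on $(-\infty, \Lambda)$ for every non-zero $g$. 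The min-max principle, applied to $T_\ell g$ ranging over $L_{2,\dagger}(\Gamma)$, then gives strict monotonicity of $\mu_1^{(\dagger)}(\ell, \cdot)$.

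For (2), I would test the form \eqref{eq:first_asympt_q} against the fixed function $\Phi_\ell^{(\dagger)} \in L_{2,\dagger}(\Gamma)$. The first and last terms contribute $O(1)$ uniformly in $\omega$, whereas the rank-one term evaluates to $-c_\ell/\sqrt{\Lambda - \omega}$ with $c_\ell = \text{const}\cdot \ell \cdot \|\Phi_\ell^{(\dagger)}\|_{L_2(\Gamma)}^{4} > 0$; this diverges to $-\infty$ as $\omega \to \Lambda^-$, and dividing by $\|\Phi_\ell^{(\dagger)}\|^2$ gives $\mu_1^{(\dagger)}(\ell, \omega) \to -\infty$. For (3) and (4), the key observation is that $P_\dagger^* \mathcal Q_0 P_\dagger$, realised as a positive self-adjoint operator on $L_{2,\dagger}(\Gamma)$, has a strictly positive smallest eigenvalue $\nu_1^{(\dagger)} > 0$ (since $\mathcal Q_0$ is invertible with positive form on $\tilde H^{1/2}_0(\Gamma)$). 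The rank-one term in \eqref{eq:first_asympt_q} has operator norm bounded by $\text{const}\cdot \ell \cdot \|\Phi_\ell^{(\dagger)}\|_{L_2(\Gamma)}^2/\sqrt{\Lambda-\omega} = O(\ell)$ on the considered range, and the remainder $R(\ell, \omega)$ is uniformly bounded by \eqref{est:R(ell,omega)}. Weyl's inequality therefore yields $\mu_1^{(\dagger)}(\ell, \omega) \ge \nu_1^{(\dagger)}/\ell - C \to +\infty$ as $\ell \to 0$, proving (3). For (4), Cauchy interlacing applied to the rank-one \emph{negative} perturbation of $\tfrac{1}{\ell} P_\dagger^* \mathcal Q_0 P_\dagger$ gives $\mu_2 \ge \nu_1^{(\dagger)}/\ell$ for the perturbed operator without $R$; absorbing $R$ via $\|R(\ell,\omega)\| \le C$ yields $\mu_2^{(\dagger)}(\ell, \omega) \ge \nu_1^{(\dagger)}/\ell - C$, which is positive for $\ell$ sufficiently small.

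The most delicate step will be the rigorous justification of the differentiation argument in (1) over the full range $(-\infty, \Lambda)$, since the asymptotic expansion \eqref{eq:first_asympt_q} is only available near $\Lambda$. This hinges on $\omega \mapsto K_\omega$ being a holomorphic operator-valued function on $\CC \setminus [\Lambda, \infty)$, which follows from Lemma \ref{lemma:solution_op_elast} and standard analytic Fredholm theory applied to the operator pencil constructed there. The other three assertions are then straightforward consequences of \eqref{eq:first_asympt_q}, the positivity of $\mathcal Q_0$, and one-line perturbation-theoretic inequalities.
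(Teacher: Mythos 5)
Your overall strategy mirrors the paper's: use the perturbation formula from Theorem \ref{th:perturb-to-N} for the monotonicity in (1), and exploit the expansion \eqref{eq:first_asympt_q} together with the positivity of $\mathcal Q_0$ and min-max for (2)--(4). Your tactics differ slightly but equivalently in two places: for (1) you differentiate $\omega\mapsto D_\omega$ and get $D_\omega'=-K_\omega^*K_\omega$, whereas the paper writes the finite difference $q(\ell,\omega_1)-q(\ell,\omega_2)$ via the spectral resolution of $A_{\varnothing+}^{(2)}$ and checks the integrand is positive; for (4) you invoke Cauchy interlacing, whereas the paper restricts the form to the codimension-one subspace orthogonal to $\Phi_\ell^{(\dagger)}$ -- these are the same argument in different clothing. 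Both of your variants are correct, and for (1) you correctly identify the analyticity needed to justify the derivative on all of $(-\infty,\Lambda)$.

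There is, however, a genuine (if small) flaw in your proof of (2): you propose to \emph{test the form against $\Phi_\ell^{(\dagger)}$ itself}. But $\Phi_\ell^{(\mathrm{s})}(x)=\cos(\varkappa\ell x)$ restricted to $\Gamma$ (and likewise $\Phi_\ell^{(\mathrm{as})}$) does \emph{not} lie in the form domain $\tilde H^{1/2}_0(\Gamma)$: its extension by zero to $\RR$ has a jump at $\partial\Gamma$, and such a function is not in $H^{1/2}(\RR)$ (the borderline Sobolev index fails). So $q^{(\dagger)}(\ell,\omega)[\Phi_\ell^{(\dagger)}]$ is not even defined, and your upper bound for $\mu_1^{(\dagger)}$ doesn't go through as written. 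The fix is exactly what the paper does: pick \emph{any} fixed $g_0\in\tilde H^{1/2}_0(\Gamma)\cap L_{2,\dagger}(\Gamma)$, $\|g_0\|_{L_2(\Gamma)}=1$, with $\langle g_0,\Phi_\ell^{(\dagger)}\rangle_\Gamma\neq 0$ (possible since $\Phi_\ell^{(\dagger)}\neq 0$ a.e.\ and $C_c^\infty(\Gamma)\cap L_{2,\dagger}(\Gamma)$ is dense). Then the rank-one term in \eqref{eq:first_asympt_q} contributes a strictly negative multiple of $(\Lambda-\omega)^{-1/2}$, and min-max gives $\mu_1^{(\dagger)}(\ell,\omega)\to-\infty$ as $\omega\to\Lambda^-$. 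With this substitution, your proof is complete and correct.
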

\begin{proof}
	For the proof of (1) we use the decomposition  of the Dirichlet-to-Neumann operator $D_\omega$ given in Theorem \ref{th:perturb-to-N}. 
	For $\omega_1 ,  \omega_2 < \Lambda $ and $g \in \tilde H^{1/2}_0(\Gamma)$  we have  
	\begin{align*}
		 q (\ell, \omega_1) - q (\ell, \omega_2)[g]  &= -  \omega_1  \scal{ ( I + \omega_1 (A_{\varnothing+}^{(2)} - \omega_1 )^{-1}) K_0 T_\ell g }{K_0 T_\ell  g}_{\Omega_+}\\
			&\quad +  \omega_2  \scal{ ( I + \omega_2 (A_{\varnothing+}^{(2)} - \omega_2 )^{-1}) K_0 T_\ell g }{K_0 T_\ell  g}_{\Omega_+} \\[5pt]
			&= \int_{[\Lambda, \infty)} \left( - \frac{\nu  \omega_1}{\nu - \omega_1} +  \frac{\nu \omega_2}{\nu - \omega_2} \right) \; \mathrm{d} \scal{E(\nu) K_0 T_\ell g}{K_0 T_\ell g}_{\Omega_+} , 
	\end{align*}
	where $E(\nu)$ is the spectral resolution  of  the operator $A_{\varnothing+}^{(2)}$.  A short calculation shows that  the above integrand 
	is strictly positive if  $\omega_1 < \omega_2 < \nu$. Now the first assertion  follows from the min-max principle for self-adjoint operator applied to the form $q^{(\dagger)}(\ell, \omega)$ for $\dagger \in \{ \mathrm{s} , \mathrm{as} \}$. 

	Here and subsequently we fix $\dagger \in \{ \mathrm{s} , \mathrm{as} \}$. To prove assertion (2) we use  Theorem 
	\ref{th:asymptotics_q}  and  the min-max principle for self-adjoint operators. Let  $\ell \in (0, \ell_0)$. For  $|\omega - \Lambda| < \varepsilon$ we have  
	$\mu_1^{(\dagger)}  (\ell, \omega) \le q^{(\dagger)} (\ell, \omega)   [g_0 ]$  
   	for every  $g_0 \in \tilde  H^{1/2}_0(\Gamma) \cap L_{2, \dagger}(\Gamma)$ with $\| g_0 \|_{L_2(\Gamma)} =1$. Choosing   $g_0$ such that $$ \scal{T_\ell^* \bigr( P_+ + P_- \bigr)  T_\ell g_0}{g_0}_{\Gamma} \neq 0 , $$
   	we obtain from Theorem \ref{th:asymptotics_q}
   	\begin{align*}
   		\mu_1^{(\dagger)} (\ell, \omega) & \le \frac1\ell \scal{\mathcal Q_0 g_0}{g_0}_\Gamma - \frac{4 |\Gamma|  \cdot  |\partial_2 \psi_{\varkappa, 2} (0)|^2}{\sqrt{\Lambda -\omega}
		\cdot \sqrt{2\zeta_1 ''(\varkappa) }}
		 \cdot \scal{T_\ell^* \bigr( P_+ + P_- \bigr)  T_\ell g_0 }{g_0}_{\Gamma}     +  C_1 , 
	\end{align*}
	which tends to $-\infty$ as $\omega \to \Lambda $. Here $C_1 := \sup\{ \| R(\ell , \omega)\|_{\mathcal L (L_2(\Gamma))} : \ell \in (0, \ell_0) \; \wedge \; |\omega - \Lambda| < \varepsilon \}$. 
	This proves the second assertion. To deduce (3) we recall that $\mathcal Q_0$ is invertible and we have $q_0[g] = \scal{\mathcal Q_0 g}{g}_\Gamma \ge 0$ for all $g \in \tilde H^{1/2}_0 (\Gamma)$. Moreover, there exists 
	$\mu_* > 0$ such that 	$$ \scal{\mathcal Q_0 g}{g}_\Gamma = q_0[g] \ge  \mu_* \|g\|_{L_2(\Gamma)}^ 2 , \qquad g \in \tilde  H^{1/2}_0(\Gamma)  . $$
	This follows since the spectrum of the self-adjoint realisation of $\mathcal Q_0$ is purely discrete and from the fact that $0$ cannot be an eigenvalue.
	Hence, for fixed  $\omega \in \RR \backslash [\Lambda, \infty)$, $| \omega - \Lambda | < \varepsilon$ we have 
	\begin{align*}
		\mu_1^{(\dagger)} (\ell, \omega) &= \inf\{   q^{(\dagger)}(\ell, \omega)[g] : g \in \tilde H^{1/2}_0(\Gamma) \cap L_{2,\dagger}(\Gamma)\; \wedge \; 
		\| g\|_{L_2(\Gamma)} = 1 \} \ge  \frac{\mu_*}\ell   -  C_1 , 
	\end{align*}
	which tends to $\infty$ sas $\ell \to 0$. This proves (3). Assertion (4) follows if we prove that the form $q^{(\dagger)} (\ell,\omega)$ is positive on  a subset 	of codimension $1$. Choose $ g \in \tilde H^{1/2}_0 (\Gamma) \cap L_{2,\dagger}(\Gamma) $, $\|g\|_{L_2(\Gamma)} = 1$, orthogonal to the function $\Phi_{\ell}^{(\dagger)}$. Then 
	\begin{equation*}
  		 q^{(\dagger)}(\ell , \omega) [g]  = \frac1\ell q_0[g]  +  \scal{R(\ell, \omega)g}{g}_\Gamma   \ge \frac{\mu_*}\ell  - C_1  > 0
	\end{equation*}
	for $0 < \ell < \tilde \ell_0 := \min \{ 1, \mu_*/C_1 \}$ and $|\omega -\Lambda| < \varepsilon$. This concludes the proof of Lemma \ref{lemma:ev_D_ell_2D}.
\end{proof}
\begin{lemma}\label{lemma:uniqueness_2D}
	There exists $\ell_0 = \ell_0(\Gamma) > 0$ such that for all $\ell \in (0, \ell_0)$ the 
	operator $A_{\Gamma_\ell}^{(2)}$ has exactly two eigenvalues  below its essential spectrum $[\Lambda, \infty)$. 
\end{lemma}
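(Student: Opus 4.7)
The plan is to exploit the symmetry decomposition $q(\ell,\omega) = q^{(\mathrm{s})}(\ell,\omega) \oplus q^{(\mathrm{as})}(\ell,\omega)$ together with the spectral monotonicity results of Lemma \ref{lemma:ev_D_ell_2D}, and to argue that each symmetry sector contributes exactly one eigenvalue. Recall that by the theorem preceding Lemma \ref{lemma:form_DtoN} the discrete eigenvalues of $A_{\Gamma_\ell}^{(2)}$ below $\Lambda$ are in bijection (with multiplicity) with the values $\omega \in (-\infty,\Lambda)$ such that $0 \in \sigma(\mathcal{Q}(\ell,\omega))$, and after the decomposition this splits as $0 \in \sigma(\mathcal{Q}^{(\mathrm{s})}(\ell,\omega))$ or $0 \in \sigma(\mathcal{Q}^{(\mathrm{as})}(\ell,\omega))$.

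First I would localise the search to a neighbourhood of $\Lambda$. Since $a_{\Gamma_\ell} \ge 0$, the spectrum of $A_{\Gamma_\ell}^{(2)}$ is contained in $[0,\infty)$, and discrete eigenvalues lie in $[0,\Lambda)$. The remark following \eqref{est:R(ell,omega)} applied to the compact set $K = [0,\Lambda-\varepsilon]$ (with $\varepsilon$ from Theorem \ref{th:asymptotics_q}) shows that $\mathcal{Q}(\ell,\omega)$ is invertible for all $\omega \in K$ as soon as $\ell$ is sufficiently small; hence no discrete eigenvalue can lie in $[0,\Lambda-\varepsilon]$ for such $\ell$. It remains to count eigenvalues in $(\Lambda-\varepsilon,\Lambda)$.

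Fix $\dagger \in \{\mathrm{s},\mathrm{as}\}$ and $\ell \in (0,\tilde{\ell}_0)$ with $\tilde{\ell}_0$ from Lemma \ref{lemma:ev_D_ell_2D}(4). By part (3) of that lemma, after shrinking $\ell_0$ if necessary we have $\mu_1^{(\dagger)}(\ell,\Lambda-\varepsilon) > 0$; by part (2), $\mu_1^{(\dagger)}(\ell,\omega) \to -\infty$ as $\omega \to \Lambda^-$; and by part (1), $\mu_1^{(\dagger)}(\ell,\cdot)$ is strictly decreasing on $(-\infty,\Lambda)$ and in particular continuous. The intermediate value theorem combined with strict monotonicity thus produces a \emph{unique} $\omega^{(\dagger)}(\ell) \in (\Lambda-\varepsilon,\Lambda)$ with $\mu_1^{(\dagger)}(\ell,\omega^{(\dagger)}(\ell)) = 0$, and part (4) guarantees that $\mu_2^{(\dagger)}(\ell,\omega) > 0$ for all $\omega \in (\Lambda-\varepsilon,\Lambda)$, so the eigenvalue $0$ of the associated self-adjoint operator is simple. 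Hence $\dim \ker \mathcal{Q}^{(\dagger)}(\ell,\omega) = 1$ precisely at $\omega = \omega^{(\dagger)}(\ell)$ and vanishes elsewhere in $(-\infty,\Lambda)$.

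Adding the two symmetry sectors, $\dim \ker \mathcal{Q}(\ell,\omega) = 0$ except at $\omega \in \{\omega^{(\mathrm{s})}(\ell),\omega^{(\mathrm{as})}(\ell)\}$, where it equals the number of sectors for which $\omega$ coincides with $\omega^{(\dagger)}(\ell)$. Either way the total count, with multiplicities, is exactly two, yielding exactly two discrete eigenvalues (which, in the notation of Theorem \ref{th:main_2D}, will be denoted $\lambda_1(\ell)$ and $\lambda_2(\ell)$). The only mild delicacy is the uniform-in-$\ell$ application of the remark on compact subsets of $\CC \setminus [\Lambda,\infty)$ to discard spurious eigenvalues far from $\Lambda$, but this is immediate from the invertibility of $\mathcal{Q}_0$ combined with the uniform bound on $\tilde{R}(\ell,\omega)$; the monotonicity argument in each sector does the rest.
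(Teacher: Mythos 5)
Your proposal is correct and follows essentially the same route as the paper: both localise the discrete spectrum near $\Lambda$ via the remark on $\mathcal Q_0$-invertibility, split into the $\mathrm{s}$ and $\mathrm{as}$ sectors, and combine parts (1)--(4) of Lemma \ref{lemma:ev_D_ell_2D} with the intermediate value theorem to produce a unique simple zero of $\mu_1^{(\dagger)}(\ell,\cdot)$ in each sector. You are slightly more explicit than the paper in using positivity of $a_{\Gamma_\ell}$ to restrict to $K=[0,\Lambda-\varepsilon]$ and in invoking $\mu_2^{(\dagger)}>0$ to conclude simplicity of the kernel, but these are exactly the points the paper's proof is implicitly relying on.
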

\begin{proof}
	The assertion follows if we show for some  $\ell_0 > 0$ that for all $\ell \in (0, \ell_0)$ there exists unique 
	$\lambda_1(\ell), \lambda_2(\ell)  \in (- \infty, \Lambda)$ such that $\mu_1^{(\mathrm{s})}(\ell, \lambda_1(\ell)) = 0 =  \mu_1^{(\mathrm{as})}(\ell, \lambda_2(\ell)) $. 
	Fix $\dagger \in \{ \mathrm{s}, \mathrm{as} \}$ and  let $\varepsilon> 0$ be chosen as in Theorem \ref{th:asymptotics_q} and  Lemma \ref{lemma:ev_D_ell_2D}. Using 
	the remark at the beginning of Section \ref{sec:proof_main_th_2D} we  choose $\ell_0 > 0$ such that 
	$\inf \sigma(A_{\Gamma_\ell}^{(2)}) \ge \Lambda - \varepsilon$ and $\mu_2^{(\dagger)} (\ell, \omega) > 0$ for all $ \ell \in (0,\ell_0)$ and 
	$\omega \in (\Lambda- \varepsilon , \Lambda)$, 
	Let   $\ell \in (0, \ell_0)$. If 
	$\omega$ is chosen such that $\mu_1^{(\dagger)} (\ell,\omega) = 0$, then Lemma \ref{lemma:ev_D_ell_2D} (1) implies for $\omega_1 < \omega< \omega_2< \Lambda$   
	$$ \mu_1^{(\dagger)} (\ell,\omega_1) < \mu_1^{(\dagger)} (\ell,\omega) = 0 < \mu_1^{(\dagger)} (\ell,\omega_2)   .  $$
	As a consequence  $A_{\Gamma_\ell}^{(2)}$ may have at most two discrete eigenvalues for $\ell \in (0, \ell_0)$. 
	
	For the sake of completeness we shall also prove the existence of the eigenvalues.  Using Lemma \ref{lemma:ev_D_ell_2D} (3) we may assume that 
	$\mu_1^{(\dagger)} (\ell, \Lambda  - \varepsilon/2) > 0$ for all $\ell \in (0,   \ell_0)$. Fix  $ \ell \in (0,  \ell_0)$. Since $\mu_1^{(\dagger)} (\ell, \omega) \to  - \infty $ as $\omega \to 
	\Lambda$ and $\mu_1^{(\dagger)} (\ell, \omega)$ depends continuously on $\omega$
	it follows that there exists $\tilde \omega  = \tilde \omega(\ell) \in (\Lambda - \varepsilon/2 , \Lambda)$ such that $\mu_1^{(\dagger)} (\ell, \tilde \omega) = 0$. Thus, $\tilde \omega \in \sigma_d (A_{\Gamma_\ell}^{(2)})$. 
\end{proof}
\begin{remark}
	Another  method of proof for  Lemma \ref{lemma:uniqueness_2D} may be  based on a variant  of  operator-valued Rouch\'e's theorem, cf.\ e.g.\ \cite{AmKaLee,GohbergSigal}.
\end{remark}
The proof of  the asymptotic formula for the eigenvalue of $A_{\Gamma_\ell}^{(2)}$ is based on the Birman-Schwinger principle. Using the estimate \eqref{est:R(ell,omega)} we may choose  $\ell_0 > 0$ such  that the operator $ \mathcal Q_0 + \ell R(\ell, \omega) $ is invertible for all $\ell \in (0, \ell_0)$ and  $\omega \in (\Lambda - \varepsilon, \Lambda)$. 
\begin{lemma}[Birman-Schwinger principle for rank-one perturbations]\label{lemma:Birman_Schwinger}
	We denote by   $T : D(T) \subseteq H \to H$ a self-adjoint operator acting in a Hilbert space $H$ with $0 \notin \sigma(T)$. Let 
	$V \in \mathcal L(H)$, $V \ge 0$ be a rank-one operator. Then $0$ is an eigenvalue of $T - V$ if and only if 
	$$ 1 = \mathrm{tr} \left( V^{1/2} T^{-1} V^{1/2} \right) $$
\end{lemma}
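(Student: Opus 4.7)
The plan is to reduce the operator identity to a scalar self-consistency condition and then recognise that condition as the trace formula.

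First I would parametrise the rank-one positive operator $V$. Since $V\ge 0$ has rank one, there exists $\psi\in H\setminus\{0\}$ such that $Vx=\langle x,\psi\rangle_H\,\psi$ for all $x\in H$. A direct computation shows that the (unique) positive square root is
\[
V^{1/2}x=\|\psi\|^{-1}\langle x,\psi\rangle_H\,\psi,
\]
so in particular $V^{1/2}$ is again bounded of rank one and maps into $\mathrm{span}\{\psi\}\subseteq H$. Because $0\notin\sigma(T)$, the inverse $T^{-1}:H\to D(T)$ is bounded, and therefore the composition $V^{1/2}T^{-1}V^{1/2}$ is a well-defined bounded rank-one (hence trace-class) operator on $H$.

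Next I would carry out the Birman--Schwinger reduction. Suppose $0$ is an eigenvalue of $T-V$, i.e.\ there exists $u\in D(T)\setminus\{0\}$ with $Tu=Vu=\langle u,\psi\rangle_H\,\psi$. If $\langle u,\psi\rangle_H=0$, then $Tu=0$, contradicting $0\notin\sigma(T)$; hence $\langle u,\psi\rangle_H\neq 0$. Applying $T^{-1}$ gives
\[
u=\langle u,\psi\rangle_H\,T^{-1}\psi,
\]
and taking the inner product with $\psi$ yields the scalar identity $\langle T^{-1}\psi,\psi\rangle_H=1$. Conversely, if $\langle T^{-1}\psi,\psi\rangle_H=1$, then $u:=T^{-1}\psi$ is nonzero (otherwise $\psi=0$, contradicting the trace equalling $1$) and satisfies $Tu=\psi=\langle u,\psi\rangle_H\,\psi=Vu$, so $0\in\sigma_p(T-V)$.

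Finally I would translate the scalar condition into the claimed trace formula. A direct computation using the explicit form of $V^{1/2}$ gives
\[
V^{1/2}T^{-1}V^{1/2}x=\|\psi\|^{-2}\,\langle T^{-1}\psi,\psi\rangle_H\,\langle x,\psi\rangle_H\,\psi,
\]
which is a rank-one operator of the form $x\mapsto \langle x,\psi\rangle_H\,b$ with $b=\|\psi\|^{-2}\langle T^{-1}\psi,\psi\rangle_H\,\psi$. Its trace is $\langle b,\psi\rangle_H=\langle T^{-1}\psi,\psi\rangle_H$. Combining this with the previous step, the equivalence $0\in\sigma_p(T-V)\Longleftrightarrow 1=\mathrm{tr}(V^{1/2}T^{-1}V^{1/2})$ follows. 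There is essentially no hard step here: the only mild subtlety is ensuring that $V^{1/2}T^{-1}V^{1/2}$ is trace class so that the right-hand side makes sense, which is automatic from the rank-one structure of $V$ together with the boundedness of $T^{-1}$.
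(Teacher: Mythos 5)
Your proof is correct. The paper states Lemma \ref{lemma:Birman_Schwinger} without proof, treating it as a standard tool, so there is no proof in the paper to compare against; your argument is a clean and complete reconstruction of the usual Birman--Schwinger reduction for a rank-one perturbation. Two small remarks: (i) in the converse direction the cleanest way to see $u = T^{-1}\psi \neq 0$ is simply that $\psi \neq 0$ (since $V$ has rank exactly one) and $T^{-1}$ is injective, which is what your parenthetical ultimately amounts to; (ii) it is worth noting, as you implicitly use, that $\langle T^{-1}\psi,\psi\rangle$ is automatically real because $T^{-1}$ is a bounded self-adjoint operator, so the equation $\langle T^{-1}\psi,\psi\rangle = 1$ is a scalar equation over $\RR$ and the identification with the trace is unambiguous.
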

Recall that 
\begin{align*}
	\ell  P_{\dagger }^* \mathcal Q(\ell, \omega) P_{\dagger}  &=  P_{\dagger}^* \mathcal Q_0  P_{\dagger}  
		- \frac{8  \ell^2 \cdot  |\partial_2 \psi_{\varkappa, 2} (0)|^2}{\sqrt{\Lambda -\omega}
		\cdot   \sqrt{2 \zeta_1 ''(\varkappa) }} \; \scal{\cdot  }{\Phi_\ell^{(\dagger)}}_{\Gamma} \cdot \Phi_\ell^{(\dagger)}  
	 + \ell P_{\dagger}^* R(\ell,\omega) P_\dagger   
\end{align*}
for $\dagger \in \{ \mathrm{s}, \mathrm{as} \}$. Note that multiplication with $\ell$ does not change the kernel of the corresponding operator. 
To deduce the asmyptotics of the eigenvalue we  apply the Birman-Schwinger principle  with $H = L_{2, \mathrm{\dagger}} (\Gamma)$,
\begin{align*}
	T &:= \mathcal Q_0   + \ell  R(\ell,\omega)  \qquad \text{and} \qquad 
	V := \frac{8  \ell^2 \cdot  |\partial_2 \psi_{\varkappa, 2} (0)|^2}{\sqrt{\Lambda -\omega}
		\cdot   \sqrt{2 \zeta_1 ''(\varkappa) }} \; \scal{\cdot  }{\Phi_\ell^{(\dagger)}}_{\Gamma} \cdot \Phi_\ell^{(\dagger)}  .
\end{align*}
Then 
$$ V^{1/2} = \sqrt{\frac{8  \ell^2 \cdot  |\partial_2 \psi_{\varkappa, 2} (0)|^2}{\sqrt{\Lambda -\omega}
		\cdot   \sqrt{2 \zeta_1 ''(\varkappa) }}} \cdot \frac{1}{\|\Phi_\ell^{(\dagger)}\|_{L_2(\Gamma)}} 
	  \scal{\cdot }{\Phi^{(\dagger)}_\ell}_\Gamma \;  \Phi^{(\dagger)}_\ell .
$$
Let us now consider the symmetric case. For the choice  $\omega = \lambda_1(\ell)$ the Birman-Schwinger principle implies 
$$ \frac{8   \ell^2 \cdot  |\partial_2 \psi_{\varkappa, 2} (0)|^2}{\sqrt{\Lambda -  \lambda_1(\ell)}
		\cdot \sqrt{2 \zeta_1 ''(\varkappa) }}  \scal{(  \mathcal Q_0  +\ell R(\ell, \lambda_1(\ell)) )^{-1} \Phi_{\ell}^{(\mathrm{s})}}{\Phi_{\ell}^{(\mathrm{s})}}_\Gamma = 1 $$
or equivalently
$$ \sqrt{\Lambda -  \lambda_1(\ell)}  = \frac{8 \ell^2 \cdot  |\partial_2 \psi_{\varkappa, 2} (0)|^2}{\sqrt{ 2 \zeta_1 ''(\varkappa) }}  
	\scal{(  \mathcal Q_0  +\ell R(\ell, \lambda_1(\ell)) )^{-1} \Phi_{\ell}^{(\mathrm{s})}}{\Phi_{\ell}^{(\mathrm{s})}}_\Gamma .  $$
Note that 
\begin{align}
	( \mathcal Q_0  +\ell R(\ell, \omega) )^{-1} &= ( I  + \ell  \mathcal  Q_0^{-1}  R(\ell, \omega))^{-1}  \mathcal  Q_0^{-1}    \notag  \\
		&= \sum_{k=0}^\infty \ell^k \bigr( - \mathcal  Q_0^{-1}  R(\ell, \omega) \bigr)^{k} \mathcal Q_0^{-1}   = \mathcal Q_0^{-1}  + \mathcal O(\ell) , \label{eq:exp_Q+R_ell} 
\end{align}
where the last estimate holds uniformly in $\omega \in (\Lambda, - \varepsilon, \Lambda)$. For sufficiently small  $\ell$ the sum converges absolutely  in $\mathcal L(L_2(\Gamma))$. Using the Taylor expansion
of $\Phi_{\ell}^{(\mathrm{s})}$ we obtain 
$$  \Phi^{(\mathrm{s})}_\ell (x) = \cos (\varkappa \ell x) = 1 + \mathcal O(\ell) = \Psi_{\ct} (x) + \mathcal O(\ell) , $$
where $\Psi_\ct = 1  \in L_{2,\mathrm{s}}(\Gamma)$ is the constant function. Thus,
\begin{align*}
	\sqrt{\Lambda -  \lambda_1(\ell)}  & = \frac{8  \ell^2 \cdot |\partial_2 \psi_{\varkappa, 2} (0)|^2}{\sqrt{2 \zeta_1 ''(\varkappa) }}  
		\scal{(  \mathcal Q_0  +\ell R(\ell, \lambda_1(\ell)) )^{-1} \Phi_{\ell}^{(\mathrm{s})}}{\Phi_{\ell}^{(\mathrm{s})}}_\Gamma  \\
 	&= \frac{8    |\partial_2 \psi_{\varkappa, 2} (0)|^2}{\sqrt{ 2 \zeta_1 ''(\varkappa) }} \cdot 
		\scal{  \mathcal Q_0 ^{-1} \Psi_\ct }{\Psi_\ct}_\Gamma \cdot \ell^2 + \mathcal O(\ell^3) .  
\end{align*}
Setting 
\begin{align}\label{def:nu_1} 
	\nu_1 := \frac{32   |\partial_2 \psi_{\varkappa, 2} (0)|^4}{\zeta_1 ''(\varkappa)} \cdot 
		\scal{  \mathcal Q_0 ^{-1} \Psi_\ct  }{\Psi_\ct }^2_\Gamma  
\end{align}
we obtain 
\begin{align*}
	\Lambda -  \lambda_1(\ell)  & = \nu_1 \cdot \ell^4 + \mathcal O(\ell^5) .  
\end{align*}
It remains to prove that $\nu_1 > 0$. Note that $\zeta_1''(\varkappa) > 0$ and 
$$ \scal{  \mathcal Q_0 ^{-1} \Psi_\ct }{\Psi_\ct}_\Gamma  =  \scal{  \mathcal Q_0 ^{-1/2} \Psi_\ct }{ \mathcal Q_0 ^{-1/2}  \Psi_\ct}_\Gamma  > 0 . $$
Moreover, using the explicit representation of the eigenfunction $\psi_\varkappa$ in  \eqref{eq:eigenfuncion_elast} we obtain  
\begin{align*}
	\partial_2 \psi_{\varkappa, 2} (0) = c_1  \varkappa  \left[ \frac{\Lambda}2 - \varkappa^2\right] \sqrt{ \Lambda-  \varkappa^2} \left[ \cos  \left(\frac\pi2\sqrt{ \frac{\Lambda}{2} -  \varkappa^2} \right) - 
		 \cos \left(\frac\pi2 \sqrt{ \Lambda-  \varkappa^2} \cdot \right)  \right] ,
\end{align*}
where $c_1 \neq 0 $ is a normalising factor. A numerical calculation,  which can be made rigorous by inserting the corresponding power series,
 shows that 
$$  \partial_2 \psi_{\varkappa, 2} (0) \neq 0 . $$
This proves the asymptotic formula for the eigenvalue $\lambda_1(\ell)$. The second eigenvalue is  treated  in the same way. Here  we use the estimate
$$ \Phi^{(\mathrm{as})}_\ell (x) = \sin  (\varkappa \ell x) = \varkappa \ell \cdot x  + \mathcal O(\ell^2 ) =  \varkappa \ell  \cdot \Psi_{\mathrm{id}}(x)  + \mathcal O(\ell^2 ) ,  $$
where $\Psi_{\mathrm{id}}(x) = x$ is the identity function on $\Gamma$. 
As above  we obtain 
\begin{align*}
	\Lambda -  \lambda_2(\ell)  & = \nu_2 \cdot \ell^8 + \mathcal O(\ell^5) 
\end{align*}
where  
\begin{align}\label{def:nu_2} 
	\nu_2 := \frac{32  \cdot \varkappa^4 \cdot     |\partial_2 \psi_{\varkappa, 2} (0)|^4}{\zeta_1 ''(\varkappa)} \cdot 
		\scal{  \mathcal Q_0 ^{-1} \Psi_{\mathrm{id}} }{\Psi_{\mathrm{id}}}^2_\Gamma  > 0 .
\end{align}
For the sake of completeness we want to calculate the expressions $\scal{  \mathcal Q_0 ^{-1} \Psi_\ct }{\Psi_\ct}_\Gamma$ and $\scal{  \mathcal Q_0 ^{-1} \Psi_{\mathrm{id}}}{\Psi_{\mathrm{id}}}_\Gamma$ in the case
of $\Gamma= (-1,1)$. Then the  operator $\mathcal Q_0$ becomes the composition of  the standard finite Hilbert transform and the  derivative.
Using \cite[Formula (4.8)]{AmKaLeeElastic13} or \cite[Section 5.2]{AmKaLee} we obtain
$$ (\mathcal Q_0^{-1} \Psi_\ct )(x) =\sqrt{1- x^2} , $$
which implies
\begin{align*}
	\scal{\mathcal Q_0^{-1} \Psi_\ct }{\Psi_\ct }_{(-1,1)} =  \int_{-1}^1 \sqrt{1-x^2} \; \mathrm{d} x = \frac\pi2 . 
\end{align*}
Moreover, using  \cite[Formula (4.9)]{AmKaLeeElastic13} we obtain 
 	$$ (\mathcal Q_0^{-1} \Psi_{\mathrm{id}})(x)  =  \frac{x}{2} \sqrt{1-x^2} , $$
and thus, 
$$ \scal{\mathcal Q_0^{-1} \Psi_{\mathrm{id}} }{\Psi_{\mathrm{id}}}_{(-1,1)}  = \frac{1}{2} \int_{-1}^1   x^2 \sqrt{1-x^2} \; \mathrm{d} t 
	= \frac{\pi}{16}  . $$
Thus, we obtain 
\begin{align}
	\Lambda -  \lambda_1(\ell)  &= \frac{8   \pi^2      |\partial_2 \psi_{\varkappa, 2} (0)|^4}{\zeta_1 ''(\varkappa)} \cdot \ell^4 + \mathcal O(\ell^5) , \\[6pt]
	\Lambda -  \lambda_2(\ell)  &= \frac{\pi^2  \varkappa^4   |\partial_2 \psi_{\varkappa, 2} (0)|^4}{8 \zeta_1 ''(\varkappa)} \cdot  \ell^8 + \mathcal O(\ell^9) . 
\end{align}
This completes the proof of Theorem \ref{th:main_2D}.

\section{Proof of the main result - 3D}
We recall that in three dimensions we consider only circular cracks, since 
we want to take advantage of  the rotational symmetry of the problem. We put $\Gamma := B(0,1)$,  $\Gamma_\ell := B(0 ,\ell)$ 
and consider the elasticity operator on  $(\RR^2 \times I ) \backslash ( \overline{\Gamma_\ell} \times \{0\})$ with traction-free boundary conditions.
As in the two-dimensional case we reduce the original problem to a problem on the upper half-plate. This  is done  in exactly
the same way, so we shall omit the details. Then for $\omega \in \CC$ the corresponding Poisson problem on the upper half-plate reads as
\begin{equation}
	 ( - \Delta - \mathrm{grad} \; \mathrm{div} ) u = \omega u \qquad \text{in } \RR^2 \times I_+ , 
\end{equation}
and 
\begin{align}
	&\left\{ \begin{array}{rl} \partial_3 u_1 + \partial_1 u_3 &= 0\\[4pt]
	\partial_3 u_2 + \partial_2 u_3 &= 0 \\[4pt]
	- 2 \partial_3 u_3 &= 0   \end{array} \right.  \qquad  \text{on } \RR^2 \times \left\{\frac\pi2\right\} , \\[6pt]
	&\left\{ \begin{array}{rl}
		\partial_3 u_1 + \partial_1 u_3 &= 0  \\[4pt]
		\partial_3 u_2 + \partial_2 u_3 &= 0\\[2pt]
		u_3 &= g  \end{array} \right. \qquad \text{on } \RR^2 \times \{0\} ,
\end{align}
where $g \in H^{1/2}(\RR^2)$ and   $u \in H^1(\Omega_+; \CC^3) \cap \mathcal H_{2+}$. The spaces 
$\mathcal H_{1+}$ and $ \mathcal H_{2+}$ will not be separately introduced in  the three-dimensional case since 
 their definition is obvious.

Applying the  Fourier transform with respect to the first two variables leads to the system 
\begin{align}\label{eq:mixed_FT_3D1}
	\begin{pmatrix}  2 \xi_1^2 + \xi_2^2 - \partial_3^2 & \xi_1 \xi_2 & - \mathrm{i} \xi_1 \partial_3 \\[2pt]
	 \xi_1 \xi_2 	&  \xi_1^2 + 2 \xi_2^2 - \partial_3^2	& - \mathrm{i} \xi_2 \partial_3 \\[2pt]
	- \mathrm{i} \xi_1 \partial_3 & - \mathrm{i} \xi_2 \partial_3 & \xi_1^2  + \xi_2^2 - 2 \partial_3^2 . 
	\end{pmatrix}  \hat u (\xi, x_3 ) = \omega \hat u (\xi, x_3 ) , 
\end{align}
where $(\xi, x_3) \in \RR^2 \times I_+$. Moreover, we have 
\begin{align}
	&\left\{ \begin{array}{rl}  \partial_3 \hat u_1\left(\xi, \frac\pi2\right)+ \mathrm{i} \xi_1 \hat u_3  \left(\xi, \frac\pi2\right) &= 0 \\[4pt]
	\partial_3 \hat u_2 \left(\xi, \frac\pi2\right)  + \mathrm{i} \xi_2  \hat u_3 \left(\xi, \frac\pi2\right) &= 0 \\[4pt]
	2 \partial_3 \hat u_3 \left(\xi, \frac\pi2\right) &= 0 
	\end{array} \right.  \hspace{0.15cm}  \qquad \quad \text{for } \xi \in \RR^2 , \\[6pt]
	&\hspace{1.05eM} \left\{ \begin{array}{rl} 
		\partial_3 \hat u_1(\xi,0) + \mathrm{i} \xi_1 \hat u_3(\xi, 0) &= 0 \\[4pt]
	\partial_3 \hat u_2(\xi, 0) + \mathrm{i} \xi_2  \hat u_3(\xi,0) &= 0\\[2pt]
		\hat u_3(\xi, 0) &= \hat g(\xi)   \end{array} \right. \qquad  \text{for }  \xi \in \RR^2 . \label{eq:mixed_FT_3D3}
\end{align}
If $\omega \in \CC \backslash [\Lambda, \infty)$ then the Sturm-Liouville problem \eqref{eq:mixed_FT_3D1}-\eqref{eq:mixed_FT_3D3} is uniquely solvable for any $\xi \in  \RR^2$
and we denote by  $\hat u(\xi) = K_\omega(\xi) \hat g(\xi)$ its  unique solution. The rotational
symmetry of the problem implies 
\begin{equation}
	K_\omega(M\xi) = \begin{pmatrix} M & 0 \\ 0 & 1 \end{pmatrix} K_\omega (\xi) 
\end{equation}
for every $M \in \mathrm{SO}(2)$, cf.\ Lemma \ref{lemma:spectral_elast_3D}. In particular, we have to solve the corresponding Poisson
problem only  for $ \xi = (|\xi| , 0) $. For $\omega \in \CC \backslash [\Lambda ,\infty)$ we denote by $K_\omega : H^{1/2}(\RR^2) \to H^1(\Omega_+;\CC^3)  \cap \mathcal H_{2+}$ the Poisson operator and by $D_\omega : H^{1/2}(\RR^2) \to H^{-1/2}(\RR^2)$ the Dirichlet-to-Neumann operator. Then 
\begin{align*}
	(\widehat{K_\omega g}) ( \xi,  \cdot) = K_\omega(\xi) \hat g(\xi) . 
\end{align*}
If $m_\omega$ is given as in previous section, then a  short calculation shows that   the  Dirichlet-to-Neumann operator satisfies 
\begin{equation}
	\widehat{D_\omega g} (\xi) = m_\omega (|\xi|) \hat g(\xi) .  
\end{equation}
As in Lemma \ref{lemma:mapping_prop_K_D} we have  the following mapping properties 
$$ K_\omega : H^{s} (\RR^2 ) \to H^{s+1/2}(\Omega_+;\CC^3) , \qquad D_\omega : H^s(\RR^2) \to H^{s-1}(\RR^2) $$
for all  $s\in \RR$. The remaining steps  of the proof are  well known. We define the spaces  
$\tilde H^{1/2}_0(\Gamma_\ell)$ and $H^{-1/2}(\Gamma_\ell)$ as in \eqref{def:Hs_0} and \eqref{def:Hs} and put
\begin{align}
	 D_{\Gamma_\ell, \omega} : \tilde H^{1/2}_0 (\Gamma_\ell) \to H^{-1/2}(\Gamma_\ell ), \qquad D_{\Gamma_\ell, \omega} := r_{\Gamma_\ell} D_{\Gamma_\ell, \omega} e_{\Gamma_\ell} ,
\end{align}
where  $e_{\Gamma_\ell}$ is the extension operator and $r_{\Gamma_\ell}$ is the restriction operator. 
Let
$$ d_{\Gamma_\ell , \omega}[g ,h] := \int_{\RR^2}  m_\omega (|\xi|)  \cdot \hat g(\xi) \; \overline{\hat h(\xi)} \; \mathrm{d} \xi , \qquad 
	g, h \in D[d_{\Gamma_\ell,\omega}] := \tilde H^{1/2}_0(\Gamma_\ell) $$
be the associated sesquilinear form and define  the scaled operator 
\begin{align}
	\mathcal Q (\ell, \omega)  &: \tilde H^{1/2}_0 (\Gamma) \to H^{-1/2}(\Gamma), \qquad  \mathcal Q (\ell, \omega) = \ell \cdot T_\ell^* D_{\Gamma_\ell, \omega}  T_\ell 
\end{align}	
as well as its sesquilinear form 
\begin{align} q(\ell, \omega)[g,h]  =  \int_{\RR^2}   m_\omega (|\xi|/\ell )  \cdot \hat g(\xi) \; \overline{\hat h(\xi)}  \; \mathrm{d} \xi  , \qquad g, h \in D[q(\ell, \omega)] := \tilde H^{1/2}_0(\Gamma) . \end{align} 
Here $ T_\ell : L_2(\Gamma) \to L_2(\Gamma_\ell)$, $ (T_\ell g)( \hat x) := \ell^{-1} g( \hat x /\ell) $. 
As before  we define $\mathcal Q_0 : \tilde H^{1/2}_0(\Gamma) \to H^{-1/2}(\Gamma)$,
\begin{align}  \scal{\mathcal Q_0 g}{h}_\Gamma := q_0 [g,h] := \int_{\RR^2}  |\xi| \cdot \hat g(\xi) \cdot \overline{h(\xi)} \; \mathrm{d} \xi . \end{align}
Using a three-dimensional version of Theorem \ref{th:perturb-to-N} we obtain 
$$ \mathcal Q(\ell, \omega) = \mathcal Q(\ell, 0) -   \omega \cdot T_\ell^* K_0^* (I + \omega (A_{\varnothing +}^{(2)} - \omega)^{-1}) K_0 T_\ell ,  $$
or equivalently
$$ q(\ell, \omega)[g,h] = q(\ell, 0 )[g,h] - \omega \scal{(I + \omega (A_{\varnothing +}^{(2)} - \omega)^{-1}) K_0 T_\ell g}{K_0 T_\ell h}_{\Omega_+}  . $$
Then the estimate $m_0(|\xi|) = |\xi| + \mathcal O(1)$ directly implies  that 
$$ q(\ell,0)[g,h]  = \frac1\ell q_0[g,h] + \mathcal O(1) . $$
Next we give an estimate for the resolvent term. For $\theta \in \RR^2$,  $|\theta| = 1$, we define 
$$ \Phi_{\varkappa \theta } (\hat x)  := \mathrm{e}^{\mathrm{i} \varkappa \theta \cdot  \hat x} , \qquad \hat x \in \RR^2 , $$
and we denote by $P_{\varkappa \theta}$ the projection in $L_2(\Gamma)$ on the subspace spanned by the function $\Phi_{\varkappa \theta}$. 
Moreover, let $\psi_{\varkappa \theta}  = (\psi_{\varkappa \theta,1} , 
 \psi_{\varkappa \theta,2}, \psi_{\varkappa \theta,3})^T $ be chosen such that $\| \psi_{\varkappa \theta} \|_{L_2(I_+;\CC^3)} = 1$
and $$ A_{\varnothing+}^{(2)}(\varkappa \theta) \psi_{\varkappa \theta} = \Lambda \psi_{\varkappa \theta} .   $$
\begin{theorem}
	There exists $\ell_0 > 0$ and $\varepsilon >0$ such that for all $\ell \in (0, \ell_0)$ and $|\omega - \Lambda| < \varepsilon$
	the following expansion holds true 
	\begin{align}
		\mathcal Q(\ell, \omega)  &= \frac1\ell  \mathcal Q_0 - 
			\frac{2  \varkappa \cdot |\partial_3 \psi_{(\varkappa,0),3}(0)|^2  }{\sqrt{\Lambda -\omega} 
		\cdot \sqrt{2 \zeta_1''(\varkappa) }} \cdot   \int_{\{|\theta| = 1\}}
			 T_\ell^*   P_{\varkappa \theta} T_\ell \; \mathrm{d} \theta  + R(\ell,\omega) . 
	\end{align}
	The remainder satisfies the estimate   
	$$  \sup\{ \| R(\ell,\omega) \|_{\mathcal L(L_2(\Gamma))} :  \ell \in (0,\ell_0) \; \wedge \; |\omega - \Lambda| < \varepsilon \} < \infty . $$
\end{theorem}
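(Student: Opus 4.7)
My plan is to mirror the argument used for Theorem \ref{th:asymptotics_q} in the two-dimensional case, with the radial variable $r = |\xi|$ playing the role of the former scalar spectral parameter and the angular variable $\theta \in S^1$ introducing a final integration over the critical circle $\{|\xi| = \varkappa\}$.

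The first step is to apply the three-dimensional analogue of Theorem \ref{th:perturb-to-N}(4) at $\eta = 0$, obtaining
$$ q(\ell, \omega)[g,h] = q(\ell, 0)[g,h] - \omega \scal{(I + \omega(A_{\varnothing+}^{(2)} - \omega)^{-1}) K_0 T_\ell g}{K_0 T_\ell h}_{\Omega_+}. $$
For the static term I would use the explicit formula for $m_0$ together with the expansion $m_0(r) = r + \mathcal O(1)$ and a change of variables to obtain $q(\ell, 0) = \frac{1}{\ell} \mathcal Q_0 + \tilde R(\ell)$ with $\tilde R(\ell)$ bounded on $L_2(\Gamma)$, just as in 2D.

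The main work is the analysis of the resolvent term. Passing to polar coordinates $\xi = r\theta$ with $r \ge 0$, $\theta \in S^1$, the integrand acquires a Jacobian $r$. The rotational symmetry in Lemma \ref{lemma:spectral_elast_3D} shows that $\sigma(A_{\varnothing+}^{(2)}(r\theta)) = \{\zeta_k(r)\}_{k \ge 1}$ depends only on $r$. Splitting the resolvent into the singular part carried by the projection $P_1(r\theta)$ onto $\ker(A_{\varnothing+}^{(2)}(r\theta) - \zeta_1(r))$ and a uniformly bounded tail $\sum_{k \ge 2}$, as in the 2D proof, reduces the problem to evaluating
$$ \int_{S^1} \int_0^\infty \frac{r \cdot \zeta_1(r)}{\zeta_1(r) - \omega} \scal{P_1(r\theta) K_0(r\theta) \hat g_\ell(r\theta)}{K_0(r\theta) \hat h_\ell(r\theta)}_{I_+} \, dr \, d\theta. $$

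For each fixed $\theta$ the inner radial integral is handled exactly as in the 2D proof. Since $\hat g_\ell$, $\hat h_\ell$ extend to entire functions and $K_0(\cdot)$, $P_1(\cdot)$ extend meromorphically in the radial variable (with singularities only at the $\omega$-independent zeros of $\det L(r,0)$), I may deform the contour into the upper half-plane and pick up the residue at the unique singularity $r = H(\mathrm i \sqrt{\Lambda - \omega})$ near $\varkappa$ furnished by Lemma \ref{lemma:sing_elastic}. As $\omega \to \Lambda$, this residue behaves like $\frac{1}{\mathrm i \sqrt{2\zeta_1''(\varkappa)(\Lambda - \omega)}}$ times an analytic factor. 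Combined with the polar Jacobian $r|_{r=\varkappa} = \varkappa$, a Taylor expansion of the inner product at $r = \varkappa$, and the integration-by-parts identity
$$ \Lambda \scal{K_0(\varkappa\theta) \hat g_\ell(\varkappa\theta)}{\psi_{\varkappa\theta}}_{I_+} = -2 \overline{\partial_3 \psi_{\varkappa\theta, 3}(0)} \cdot \hat g_\ell(\varkappa\theta), $$
I arrive at the singular contribution
$$ \frac{2\pi \Lambda \varkappa}{\sqrt{2\zeta_1''(\varkappa)(\Lambda - \omega)}} \int_{S^1} |\partial_3 \psi_{\varkappa\theta, 3}(0)|^2 \cdot \hat g_\ell(\varkappa\theta) \, \overline{\hat h_\ell(\varkappa\theta)} \, d\theta + \mathcal O(1). $$
The rotational invariance of the eigenfunction in \eqref{eq:eigenfunction_elast_3D} gives $|\partial_3 \psi_{\varkappa\theta, 3}(0)|^2 = |\partial_3 \psi_{(\varkappa,0), 3}(0)|^2$ independently of $\theta$, and the identification of $\hat g_\ell(\varkappa\theta) \overline{\hat h_\ell(\varkappa\theta)}$ with the matrix element of $T_\ell^* P_{\varkappa\theta} T_\ell$ produces the announced angular-integral kernel.

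The main obstacle will be executing the radial contour deformation uniformly in $\theta \in S^1$ and controlling both the tail of the radial integral (away from $r = \varkappa$) and the non-resonant remainder $\sum_{k \ge 2}$ so that the resulting operator-valued error $R(\ell, \omega)$ is uniformly bounded on $L_2(\Gamma)$. The 2D proof already furnishes every ingredient; the genuinely new point is that each of these estimates must survive the outer integration over $\theta \in S^1$, which is compact and therefore contributes only a multiplicative $2\pi$ factor rather than any new singularity.
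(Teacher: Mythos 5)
Your overall plan is correct and follows the same route as the paper: use the three-dimensional analogue of the perturbation formula at $\eta=0$, estimate $q(\ell,0)$ via $m_0(|\xi|) = |\xi| + \mathcal O(1)$, pass to polar coordinates, localize the resolvent to the first eigenvalue projection $P_1(r\theta)$, apply the radial residue calculus furnished by Lemma \ref{lemma:sing_elastic} (uniformly in $\theta$), use the integration-by-parts identity $\Lambda\scal{K_0(\varkappa\theta)\hat g_\ell(\varkappa\theta)}{\psi_{\varkappa\theta}}_{I_+} = -2\overline{\partial_3\psi_{\varkappa\theta,3}(0)}\hat g_\ell(\varkappa\theta)$, and finally identify $\hat g_\ell(\varkappa\theta)\overline{\hat h_\ell(\varkappa\theta)}$ with the matrix element of $T_\ell^* P_{\varkappa\theta}T_\ell$ using $\Gamma = B(0,1)$. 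This is exactly how the paper proceeds; there is no methodological divergence.

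There is however a constant slip in your displayed ``singular contribution.'' You wrote a single factor of $\Lambda$ in the numerator, but the $\Lambda$-factors in fact cancel completely. Tracking them carefully: the residue of $\tfrac{r\,\zeta_1(r)}{\zeta_1(r)-\omega}$ at the pole contributes $\varkappa\cdot\zeta_1(r_{\mathrm{pole}}) = \varkappa\omega \approx \varkappa\Lambda$; the outer factor $-\omega$ from $-\omega\scal{(I+\omega(A_{\varnothing+}^{(2)}-\omega)^{-1})K_0T_\ell g}{K_0T_\ell h}$ contributes another $\Lambda$; and the identity, applied to both $\hat g$ and $\hat h$, produces $\scal{P_1K_0\hat g}{K_0\hat h}_{I_+} = \tfrac{4}{\Lambda^2}|\partial_3\psi_{\varkappa\theta,3}(0)|^2\,\hat g_\ell(\varkappa\theta)\overline{\hat h_\ell(\varkappa\theta)}$, cancelling the $\Lambda^2$. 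The surviving numerical factor is $4$, not $\Lambda$, so the intermediate singular contribution should read $\tfrac{8\pi\varkappa|\partial_3\psi_{(\varkappa,0),3}(0)|^2}{\sqrt{2\zeta_1''(\varkappa)(\Lambda-\omega)}}\int_{S^1}\hat g_\ell(\varkappa\theta)\overline{\hat h_\ell(\varkappa\theta)}\,\mathrm d\theta + \mathcal O(1)$, and the identity $\hat g_\ell(\varkappa\theta)\overline{\hat h_\ell(\varkappa\theta)} = \tfrac{1}{4\pi}\scal{T_\ell^*P_{\varkappa\theta}T_\ell g}{h}_\Gamma$ then yields the coefficient $\tfrac{2\varkappa|\partial_3\psi_{(\varkappa,0),3}(0)|^2}{\sqrt{\Lambda-\omega}\sqrt{2\zeta_1''(\varkappa)}}$ of the statement. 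Your formula is off by the factor $\Lambda/4 \approx 0.47$. This is a bookkeeping error rather than a structural one; everything else in the proposal is in order.
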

\begin{proof}
	We have 
	\begin{align*}
		\mathcal Q(\ell, \omega)   &=  \frac1\ell \mathcal Q_0    - 
		 \omega \cdot  T_\ell^* K_ 0^* ( I + (A_{\varnothing+}^{(2)} - \omega)^{-1}) K_0 T_\ell +  \mathcal O(1)  \\
		&= \frac1\ell  \mathcal Q_0    -  \omega \cdot  T_\ell^* K_ 0^* (A_{\varnothing+}^{(2)} - \omega)^{-1}) K_0  T_\ell +  \mathcal O(1). 
	\end{align*}
	For $g , h \in \tilde H^{1/2} (\Gamma)$ we put $g_\ell = T_\ell g$ and $h_\ell = T_\ell h$. Then 
	\begin{align*} 
		\scal{T_\ell^* K_ 0^* (A_{\varnothing+}^{(2)} - \omega)^{-1}) K_0 T_\ell  g   }{h}_{\Omega+}   & = \int_{\RR^2} \scal{(A_{\varnothing+}^{(2)}(\xi) - \omega)^{-1} K_0(\xi) \hat g_\ell (\xi) }{ K_0(\xi) \hat h_\ell (\xi)}_{I+} \; \mathrm{d} \xi . 
	\end{align*}
	Introducing polar coordinates and using an  estimate on the resolvent term we obtain as in the two-dimensional case 
	\begin{align*} 
		& \scal{T_\ell^* K_ 0^* (A_{\varnothing+}^{(2)} - \omega)^{-1}) K_0 T_\ell  g   }{h}_{\Omega+} \\ 
		&= \int_{\varkappa -\varepsilon < |\xi| < \varkappa - \varepsilon} 
		 \scal{(A_{\varnothing+}^{(2)}(\xi)  - \omega)^{-1} K_0(\xi) \hat g_\ell  (\xi) }{ K_0(\xi) \hat h_\ell  (\xi)}_{I+} \; \mathrm{d} \xi + \mathcal O(1) \\
		&= \int_{\{\theta = 1 \} } \int_{\varkappa -\varepsilon}^{\varkappa - \varepsilon} 
			\scal{(A_{\varnothing+}^{(2)}(\theta r) - \omega)^{-1} K_0(\theta r) \hat   g_\ell (\theta r)   }
		{ K_0(\theta r) \hat h_\ell (\theta r)}_{I+}    r \; \mathrm{d} r \; \mathrm{d} \theta + \mathcal  O(1) \\
		&= \int_{\{\theta = 1 \} } \int_{\varkappa -\varepsilon}^{\varkappa - \varepsilon}
 			\frac1{\zeta_1(r) - \omega} \scal{P_1(\theta r) K_0(\theta r) \hat   g_\ell (\theta r)   }
			{ K_0(\theta r) \hat h_\ell  (\theta r)}_{I+}   r \; \mathrm{d} r \; \mathrm{d} \theta + \mathcal O(1) . 
	\end{align*}
	We recall that the eigenvalue distribution functions  $\zeta_k(\cdot)$, $k \in \NN$,  are given as in the two-dimensional case. The operator 
	$P_1(\cdot)$ is the projection onto the corresponding eigenspace. Using the residue theorem for the inner integral 
	and a Taylor expansion of the remaining terms we obtain 
	\begin{align*}
		&\int_{\varkappa -\varepsilon}^{\varkappa - \varepsilon}
			\frac1{\zeta_1(r) - \omega} \scal{P_1(\theta r) K_0(\theta r) \hat   g_\ell(\theta r)   }
			{ K_0(\theta r) \hat h_\ell (\theta r)}_{I+} \cdot   r \; \mathrm{d} r  \\
		&= \frac{2\pi \varkappa}{\sqrt{\Lambda - \omega} \cdot \sqrt{2 \zeta_1''(\varkappa)} } 
		\scal{P_1(\theta \varkappa) K_0(\theta \varkappa) \hat   g_\ell (\theta \varkappa)   }
			{ K_0(\theta \varkappa) \hat h_\ell (\theta \varkappa)}_{I+} + \mathcal  O(1) ,
	\end{align*}
	where the reminder may be estimated uniformly in $|\theta| = 1 $.
	Then 
	\begin{align*}	
		& \Lambda^2  \scal{P_1(\theta \varkappa) K_0(\theta \varkappa) \hat   g_\ell(\theta \varkappa)   }{ K_0(\theta \varkappa) \hat h_\ell (\theta \varkappa)}_{I_+} \\
		&\qquad  = \Lambda^2 \scal{ K_0(\varkappa) g_\ell(\varkappa\theta) }{\psi_{\varkappa\theta}}_{I_+} \cdot \scal{\psi_{\varkappa\theta}}{ K_0 (\varkappa \theta) \hat h_\ell (\varkappa\theta)}_{I+} \\
		&\qquad =   |2 \partial_3 \psi_{\theta \varkappa,3}(0)|^2 \cdot 
			\hat g_\ell (\theta \varkappa) \cdot \overline{\hat h_\ell (\theta \varkappa)} . 
	\end{align*}
	We note that $\psi_{\varkappa \theta,3} = \psi_{(\varkappa,0),3}$ does not depend on $\theta $. 
	Moreover, from the particular choice $\Gamma = B(0,1)$ we obtain 
	\begin{align*}
		\hat g_\ell (\theta \varkappa) \cdot \overline{\hat h_\ell (\theta \varkappa)} &= \frac{1}{4\pi^2} \left( \int_{\Gamma} \mathrm{e}^{- \mathrm{i} \varkappa \theta x } g_\ell (x) \; \mathrm{d} x \right)
		 \overline{\left( \int_{\Gamma} \mathrm{e}^{- \mathrm{i} \varkappa \theta x } h_\ell (x) \; \mathrm{d} x \right)} \\
		  & = \frac{1}{4\pi} \scal{P_{\varkappa \theta} T_\ell g}{T_\ell h}_{\Gamma} = \frac{1}{4\pi} \scal{T_\ell^* P_{\varkappa \theta} T_\ell g}{h}_{\Gamma}. 
	\end{align*}
	This  proves the assertion.
\end{proof}
Next we want to use  the  rotational symmetry of the problem. Recall that $\Gamma = B(0,1)$. For  $m \in  \ZZ$ we introduce  the space
\begin{align}   
	L_{2,m} (\Gamma)  := \{ g \in L_2(\Gamma) : g(r \cos \varphi , r\sin  \varphi ) = \mathrm{e}^{\mathrm{i} m  \varphi} \tilde g(r) \;  \text{ for some } \tilde g : (0,1) \to \CC \} 
\end{align}  
with the  corresponding projection
\begin{align}   (P_{m} g )(r \cos \varphi, r \sin \varphi ) = \frac{1}{2\pi} \mathrm e^{\mathrm im\varphi} \int_0^{2\pi} \mathrm e^{-\mathrm im t}
	 g(r\cos t, r \sin t) \; \mathrm{d} t ,  \end{align}
where $(r, \varphi ) \in (0,1) \times (0,2\pi)$. 
Then $$ L_{2}(\Gamma) =  \bigoplus_{m \in \ZZ}   L_{2,m} (\Gamma) . $$
\begin{lemma}\label{lemma:decomp_q_3d}
	The form $q(\ell, \omega)$ admits the following decomposition
	$$ q(\ell,\omega)  := \bigoplus_{m \in \ZZ} q^{(m)}(\ell,\omega) , $$
	where $ q^{(m)}(\ell,\omega)$ acts in the Hilbert spaces  $L_{2,m} (\Gamma)$.
\end{lemma}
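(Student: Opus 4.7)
The plan is to exploit the rotational symmetry of the form $q(\ell,\omega)$. For $M \in \mathrm{SO}(2)$, let $R_M : L_2(\Gamma) \to L_2(\Gamma)$ denote the unitary rotation $(R_M g)(x) := g(M^{-1}x)$. Since $\Gamma = B(0,1)$ is rotationally invariant and $H^{1/2}(\RR^2)$ is $\mathrm{SO}(2)$-invariant, $R_M$ restricts to a unitary operator on $\tilde H^{1/2}_0(\Gamma)$. First I would use the identity $\widehat{R_M g}(\xi) = \hat g(M^{-1}\xi)$ together with the fact that the symbol $\xi \mapsto m_\omega(|\xi|/\ell)$ depends only on $|\xi|$; the substitution $\xi \mapsto M\xi$ in the integral defining $q(\ell,\omega)$ yields
\begin{equation*}
q(\ell, \omega)[R_M g, R_M h] = q(\ell, \omega)[g, h] \qquad \text{for all } g,h \in \tilde H^{1/2}_0(\Gamma).
\end{equation*}

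Next I would invoke the orthogonal decomposition $L_2(\Gamma) = \bigoplus_{m \in \ZZ} L_{2,m}(\Gamma)$ into isotypic components of the $\mathrm{SO}(2)$-action; writing $M_\varphi$ for rotation by angle $\varphi$, the operator $R_{M_\varphi}$ acts on $L_{2,m}(\Gamma)$ as multiplication by $\mathrm{e}^{\mathrm{i} m \varphi}$. Applied to $g \in L_{2,m}(\Gamma) \cap \tilde H^{1/2}_0(\Gamma)$ and $h \in L_{2,n}(\Gamma) \cap \tilde H^{1/2}_0(\Gamma)$, the invariance gives
\begin{equation*}
q(\ell, \omega)[g, h] = \mathrm{e}^{\mathrm{i}(m-n)\varphi} q(\ell, \omega)[g, h]
\end{equation*}
for every $\varphi \in [0,2\pi)$, which forces $q(\ell, \omega)[g, h] = 0$ whenever $m \neq n$. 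Defining $q^{(m)}(\ell, \omega)$ as the restriction of $q(\ell, \omega)$ to $D[q^{(m)}(\ell, \omega)] := \tilde H^{1/2}_0(\Gamma) \cap L_{2,m}(\Gamma)$ then produces the claimed orthogonal sum decomposition.

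The only remaining point is to verify that the form domain splits compatibly, i.e.\ that $\tilde H^{1/2}_0(\Gamma) = \bigoplus_{m \in \ZZ} \bigl( \tilde H^{1/2}_0(\Gamma) \cap L_{2,m}(\Gamma) \bigr)$. This follows from the rotational invariance of $\tilde H^{1/2}_0(\Gamma)$ combined with the boundedness of the projectors $P_m$: each $P_m$ can be written as the average $\frac{1}{2\pi} \int_0^{2\pi} \mathrm{e}^{-\mathrm{i} m \varphi} R_{M_\varphi} \, \mathrm{d}\varphi$ over the rotation group, and since each $R_{M_\varphi}$ is an isometry of $\tilde H^{1/2}_0(\Gamma)$, $P_m$ is bounded on $\tilde H^{1/2}_0(\Gamma)$, with $\sum_m P_m = I$ strongly. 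I expect no genuine obstacle here; the main technical check is simply that rotations act isometrically on the fractional Sobolev space $H^{1/2}(\RR^2)$ and preserve the support condition $\mathrm{supp}\, g \subseteq \overline{\Gamma}$, both of which are standard.
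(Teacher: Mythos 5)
Your proof is correct and reaches the same conclusion, but it packages the argument differently from the paper. The paper works directly at the Fourier level: it observes that the angular projections $\tilde P_m$ on $L_2(\RR^2)$ commute with the planar Fourier transform, verifies by explicit manipulation of the norm $\|(1+|\xi|)^{1/2}\hat g\|_{L_2}$ that each $\tilde P_m$ is a bounded orthogonal projection in $H^{1/2}(\RR^2)$ (hence also in $\tilde H_0^{1/2}(\Gamma)$), and then reads off the vanishing of $q(\ell,\omega)[P_{m_1}g,P_{m_2}h]$ for $m_1\neq m_2$ straight from the integral representation, since the integrand $m_\omega(|\xi|/\ell)\,(\tilde P_{m_1}\hat g)(\xi)\,\overline{(\tilde P_{m_2}\hat h)(\xi)}$ has a radial weight and the angular factors are orthogonal on each circle. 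You instead establish $\mathrm{SO}(2)$-invariance of the form by the substitution $\xi\mapsto M\xi$, and then extract the block-orthogonality by the Schur-type observation that $R_{M_\varphi}$ acts as a phase on the isotypic components; the compatibility of the form domain with the decomposition you recover by writing $P_m$ as the group average $\frac{1}{2\pi}\int_0^{2\pi}\mathrm{e}^{-\mathrm{i}m\varphi}R_{M_\varphi}\,\mathrm{d}\varphi$ of isometries. Both proofs ultimately hinge on the same two facts (the symbol $m_\omega$ is radial, and the Fourier transform intertwines rotations), so neither is more general; yours is somewhat more streamlined conceptually, while the paper's is entirely computational and makes the Fourier mechanism explicit. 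One small sign slip: with your convention $(R_M g)(x)=g(M^{-1}x)$, the operator $R_{M_\varphi}$ acts on $L_{2,m}(\Gamma)$ by $\mathrm{e}^{-\mathrm{i}m\varphi}$ rather than $\mathrm{e}^{\mathrm{i}m\varphi}$; this does not affect the orthogonality argument, only the phase factor.
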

\begin{proof} 
	For the proof we use a similar decomposition of the $L_2$-space of functions defined on all of $\RR^2$. We have 
	$$ L_2(\RR^2) = \bigoplus_{m \in \ZZ}   L_{2,m} (\RR^2) , $$
	where $$ L_{2,m} (\RR^2) = \{ g \in L_2(\RR^2) : g(r \cos \varphi , r\sin  \varphi ) = \mathrm{e}^{\mathrm{i} m  \varphi} g(r) \;  \text{ for some } \tilde g :\RR_+ \to \CC  \} . $$
 	Let us denote by $\tilde P_m$ the corresponding projections. A short calculation shows that $\tilde P_m$ commutes with the standard Fourier transform on $\RR^2$. 
	For  $g \in H^{1/2}(\RR^2)$ we have 
	\begin{align*} 
		\| P_m g \|_{H^{1/2}(\RR^2)}^2 &= \|  (1 + |\xi|)^{1/2} \widehat{ ( \tilde P_m g) } (\xi) \|_{L_2(\RR^2_\xi)}^2 = \| (1 + |\xi|)^{1/2} ( \tilde P_m \hat{ g}) (\xi) \|_{L_2(\RR^2_\xi)}^2 \\ 
		&= \| \tilde P_m ((1 + |\xi|)^{1/2} g(\xi) )  \|_{L_2(\RR^2_\xi)}^2 \le \| g \|_{H^{1/2}(\RR^2)}^2 ,  
	\end{align*}
	and thus, $g \in H^{1/2}(\RR^2)$. In the same way we obtain for $g , h \in H^{1/2}(\RR)$ 
	\begin{align*}
		  \scal{\tilde P_m g}{h}_{H^{1/2}(\RR)}  =   \scal{ g}{\tilde P_m h}_{H^{1/2}(\RR)} \quad \text{and} \quad 
		  \scal{\tilde P_{m_1} g}{\tilde P_{m_2} h}_{H^{1/2}(\RR)} = 0  \quad \text{if }  m_1 \neq m_2 . 
	\end{align*}
	Thus, $\tilde P_m$ is a orthogonal projection in $H^{1/2}(\RR^2)$ and 
	$$ H^{1/2}(\RR^2) = \bigoplus_{m \in \ZZ}  H^{1/2}(\RR^2) \cap  L_{2,m}(\RR^2)   . $$ 
 	A similar assertion holds true for $\tilde H^{1/2}_0(\Gamma)$ equipped with the standard scalar product. Now  consider    $\tilde H^{1/2}_0(\Gamma)$ with the scalar product induced by the form $q(\ell, \omega)$. Note that 
 	for $m_1 \neq m_2$ we have 
 	$$  q(\ell, \omega)[P_{m_1} g, P_{m_2} h  ]  = \int_{\RR^2}   m_\omega (|\xi|/\ell ) \cdot (\tilde P_{m_1}  \hat{ g})(\xi) \; \overline{(\tilde P_{m_2}  \hat{ h})(\xi)}  \; \mathrm{d} \xi = 0 .  $$
	As above we obtain
	$$  D[q(\ell, \omega)]  = \bigoplus_{m \in \ZZ}   D[q(\ell, \omega)] \cap L_{2,m}(\Gamma) ,  $$
	which  proves the assertion. 
\end{proof}
\begin{remark}
	Following \cite{HSW} we have  the  decomposition
	$$ \mathcal H_{2+} = \bigoplus_{m \in \ZZ} X_m , $$
	where 
	\begin{align*}
		X_{m} := \Bigr\{ & u \in L^2(\Omega_+;\CC^3) \cap \mathcal H_{2+} : 
		 u ( r\cos \varphi, r \sin
		\varphi , x_3)  =  e^{\mathrm i m \varphi} 
		\begin{pmatrix} M_{\varphi} & 0 \\ 0 & 1 \end{pmatrix} \tilde u (r,x_3 ) \\ & 
			\text{ for some } \tilde u : \RR_+ \times I_+ \to \CC  \biggr\} \notag, 
	\end{align*}
	Here we denote by  $M_{\varphi}  \in \mathrm{SO}(2)$ the planar
	rotation matrix to the rotation angle $\varphi \in (0,2 \pi)$. The elasticity operator with circular crack decomposes as follows 
	$$ A_{\Gamma_\ell+}^{(2)} = \bigoplus_{m \in \ZZ} A_{\Gamma_\ell+}^{(2),m} ,   $$
	where  $A_{\Gamma_\ell+}^{(2),m}$ acts in $X_m$. A short calculations shows that  
	$$ \mathrm{dim}\; \mathrm{ker}\; \left( \mathcal Q_0(\ell, \omega)  \Bigr|_{\tilde H^{1/2}_0(\Gamma) \cap L_{2,m}(\Gamma) } \right) = \mathrm{dim}\; \mathrm{ker}\; \left(A_{\Gamma_{\ell}+}^{(2),m} - \omega \right) . $$
\end{remark}
As a consequence of Lemma \ref{lemma:decomp_q_3d}  we  obtain 
\begin{align*}
	&\mathcal Q(\ell, \omega) 
		= \sum_{m \in \ZZ} P_m^*  \mathcal Q(\ell, \omega)  P_m , 
\end{align*}
where   
\begin{align*}
	& P_m^*  \mathcal Q(\ell, \omega)  P_m \\
	&= \frac1\ell  P_m^*    \mathcal Q_0 P_m  -\frac{2  \varkappa \cdot |\partial_3 \psi_{\varkappa (1,0),3}(0)|^2  }{\sqrt{\Lambda -\omega} 
		\cdot \sqrt{2 \zeta_1''(\varkappa) }} \cdot   \int_{\{|\theta| = 1\}} 
			P_m^*  T_\ell^*   P_{\varkappa \theta} T_\ell P_m \; \mathrm{d} \theta 
	+ P_m^* R(\ell,\omega) P_m .
\end{align*}
Fix $\theta = (\cos \alpha , \sin \alpha)^T$, $\alpha \in (0,2\pi)$. Then for $g, h \in L_2(\Gamma)$ we obtain 
\begin{align*}
	\scal{ P_m^*  T_\ell^*   P_{\varkappa \theta} T_\ell P_m g}{h}_\Gamma 
	&= \frac{1}{\pi} \scal{g}{P_m T_\ell^*  \Phi_{\varkappa \theta} }_\Gamma \cdot \scal{P_m T_\ell^* \Phi_{\varkappa \theta} }{ h}_\Gamma , 
\end{align*}
where we recall that $ \Phi_{\varkappa \theta } (\hat x)  := \mathrm{e}^{\mathrm{i} \varkappa \theta \cdot  \hat x}$, $\hat x \in \RR^2$.
For  $(r, \phi) \in (0,1) \times (0,2\pi)$ we have  
\begin{align*}
	\Bigr( P_m T_{\ell}^* \Phi_{\varkappa  \theta} \Bigr) (r \cos \varphi,  r \sin \varphi)  
 	&=  \frac{\ell  \mathrm{e}^{\mathrm{i} m \varphi }}{2\pi} \int_0^{2\pi}  \mathrm{e}^{-\mathrm{i} m \psi }  
	      \mathrm{e}^{\mathrm{i} \varkappa \ell ( \cos \alpha,  \sin \alpha) \cdot  (r    \cos \psi  , r  \sin \psi )^T }  \; \mathrm{d} \psi  \\
	&=  \ell \cdot  \mathrm{e}^{\mathrm i m \varphi  } \cdot  e^{- \mathrm i m   \alpha  }  \cdot  \frac1{2\pi} \int_{\alpha - \frac{\pi}2 }^{\alpha - \frac{5\pi}{2}} 
		\mathrm{e}^{- \mathrm i r \varkappa \ell \sin t  +  \mathrm{i}m ( t + \frac\pi2) } \; \mathrm{d} t \\
  	&= \ell \cdot   \mathrm{i}^m \cdot  e^{  -\mathrm i m \alpha }  \cdot  \mathrm{e}^{\mathrm i  m \varphi } \cdot    J_m(r \varkappa \ell),  
\end{align*}
where  $J_m$ denotes the Bessel function of the first kind of order $m$, cf.\ \cite[Formula 2.2 (5)]{Watson}. 
Setting 
$$ \Phi_{\ell}^{(m)} (r \cos \varphi , r \sin \varphi ) :=  \mathrm{e}^{\mathrm i m \varphi  }   J_m(r \varkappa \ell) $$
we obtain 
$$ P_m^*  T_\ell^*   P_{\varkappa \theta} T_\ell P_m   = \frac{\ell^2}{\pi}  \; \scal{\cdot}{\Phi_{\ell}^{(m)}}_\Gamma \; \Phi_{\ell}^{(m)}  . $$
In  particular this expression does not depend on $\theta$ any more. 
Thus, we have 
\begin{align*}
	P_m^*  \mathcal Q(\ell, \omega)  P_m  
	&= \frac1\ell P_m^*    \mathcal Q_0  P_m  - 
			\frac{4  \varkappa \cdot \ell^2 \cdot |\partial_3 \psi_{\varkappa (1,0),3}(0)|^2  }{\sqrt{\Lambda -\omega} 
		\cdot \sqrt{2 \zeta_1''(\varkappa) }} \;  \scal{\cdot}{\Phi_{\ell}^{(m)}}_\Gamma \; \Phi_{\ell}^{(m)}   +  P_m^* R(\ell,\omega) P_m . 
\end{align*}
Note that now the singular term is again a rank-one perturbation. As above we may prove that 
for every $m \in \ZZ$ there exists $\ell_0 = \ell_0(m)$ such that  the operator $A_{\Gamma_\ell+}^{(2),m}$ has a unique  eigenvalue $\lambda_m(\ell)$  below its essential spectrum $[\Lambda, \infty)$. Applying the Birman-Schwinger principle we obtain for the eigenvalue
\begin{align*}
	\sqrt{\Lambda - \lambda_m(\ell)}  &=  \ell^3  \cdot \frac{4 \varkappa \cdot |\partial_3  \psi_{(\varkappa,0),3} (0)|^2}{ \sqrt{2 \zeta_1''(\varkappa)} }
		\cdot \scal{(\mathcal Q_0 + \ell R_{\ell,\lambda_m(\ell) }) ^{-1} \Phi_{\ell}^{(m)}}{\Phi_{\ell}^{(m)}}_{\Gamma} \\
	&= \ell^3  \cdot \frac{4 \varkappa \cdot |\partial_3  \psi_{(\varkappa,0),3} (0)|^2}{ \sqrt{2 \zeta_1''(\varkappa)} }
		\cdot \scal{\mathcal Q_0^{-1} \Phi_{\ell}^{(m)}}{\Phi_{\ell}^{(m)}}_{\Gamma} + \mathcal O(\ell^4) .
\end{align*}
For the function $\Phi_{\ell}^{(m)}$ we use its Taylor expansion in the radial direction. For $m \ge 0$ we have  
\begin{align*}
	\Phi_{\ell}^{(m)}(r,  \varphi) &= \mathrm{e}^{\mathrm i m \varphi  }   J_m( r \varkappa \ell) 
	= \frac{\ell^{m} \varkappa^{m} } {2^{m}}  \;  r^{m} \;  \mathrm{e}^{\mathrm{i} m  \varphi} 
	 + \mathcal O(\ell^{m+1}) \\&  = \frac{\ell^{m}  \varkappa^{m}}{2^{m}}  
	\cdot  \Psi_m  (r, \varphi) + \mathcal O(\ell^{m+1}) , 
\end{align*}   
where $ \Psi_m  (r, \varphi) = r^{m} \cdot \mathrm{e}^{\mathrm{i} m \varphi} $. For $m \le 0$ we have
\begin{align*}
	\Phi_{\ell}^{(m)}(r,  \varphi) &= \mathrm{e}^{\mathrm i m \varphi  }  (-1)^m  J_{|m|}( r \varkappa \ell) 
	= (-1)^m \frac{\ell^{|m|}  \varkappa^{|m|}}{2^{|m|}}  
	\cdot  \Psi_m  (r, \varphi) + \mathcal O(\ell^{|m|+1}) , 
\end{align*}  
where we put $\Psi_m  (r, \phi) = r^{|m|} \cdot \mathrm{e}^{\mathrm{i} m \phi}$. Finally,  we have 
\begin{equation}
		\Lambda -\lambda_m (\ell, m) = \rho_m  \cdot \ell^{6+4|m|}  + \mathcal O(\ell^{7+4|m|} ) \qquad \text{as} \quad \ell \to 0 , 
\end{equation} 
where 
\begin{align} \label{def:nu_m}
		\rho_m :=   \ell^{6+4|m|} \cdot  \frac{8 \varkappa^{4|m|+2 } \cdot  |\partial_3  \psi_{(\varkappa,0),3} (0)|^4 }{2^{4|m|} 
	\cdot \zeta_1''(\varkappa)} \cdot \scal{\mathcal Q_0^{-1} \Psi_m  }{ \Psi_m }_{\Gamma}^2   .
\end{align}
We note that $\rho_m > 0$, which  follows from 
$$  \scal{\mathcal Q_0^{-1} \Psi_m  }{ \Psi_m }_{\Gamma} =\scal{\mathcal Q_0^{-1/2} \Psi_m  }{\mathcal Q_0^{-1/2} \Psi_m }_{\Gamma}   > 0  $$
and from $  \partial_3 \psi_{(\varkappa,0),3} (0)  \neq 0$. This proves Theorem \ref{th:main_3D}.

\section*{\bf Acknowledgements:}
The research and in particular A.H. were supported by DFG grant WE-1964/4-1.

\end{document}